\newcommand\dela[1]{}
\numberwithin{equation}{section}
\newtheorem{thm}{Theorem}[section]
\newtheorem{lem}[thm]{Lemma}
\newtheorem{pr}[thm]{Proposition}
\newtheorem{rmk}[thm]{Remark}
\newcommand{\noi}{\noindent}
\newcommand{\cal}{\mathcal}
\date{}
\begin{document}
	\pagenumbering{arabic}
	
	\author{Anindya Goswami}
	\address{Department of Mathematics, IISER Pune, India}
	\email{anindya@iiserpune.ac.in}
	
	\author{Nimit Rana}
	\address{Fakult\"at f\"ur Mathematik, Universit\"at Bielefeld,  Germany}
	\email{nrana@math.uni-bielefeld.de}
	
	\author{Tak Kuen Siu}
	\address{Department of Actuarial Studies and Business Analytics, Macquarie Business School, Macquarie University, Sydney, NSW, Australia}
	\email{ken.siu@mq.edu.au} 
		
	
	\title[Regime Switching Optimal Growth Model]{Regime Switching Optimal Growth Model with Risk Sensitive Preferences}\thanks{This research was supported in part by the SERB MATRICS (MTR/2017/000543), DST FIST (SR/FST/MSI-105) and NBHM 02011/1/2019/NBHM(RP)R\& D-II/585. The second author would like to acknowledge the German Science Foundation DFG to provide the financial support, through the Research Unit FOR 2402,  during the work of this manuscript.}
	
	\addtocounter{footnote}{-1} \vskip 1 true cm
\begin{abstract}
We consider a risk-sensitive optimization of consumption-utility on infinite time horizon where the one-period investment gain depends on an underlying economic state whose evolution over time is assumed to be described by a discrete-time, finite-state, Markov chain. We suppose that the production function also depends on a sequence of i.i.d. random shocks. For the sake of generality, the utility and the production functions are allowed to be unbounded from above. Under the Markov regime-switching model, it is shown that the value function of optimization problem satisfies an optimality equation and that the optimality equation has a unique solution in a particular class of functions. Furthermore, we show that an optimal policy exists in the class of stationary policies. We also derive the Euler equation of optimal consumption. Furthermore, the existence of the unique joint stationary distribution of the optimal growth process and the underlying regime process is examined. Finally, we present a numerical solution by considering power utility and some hypothetical values of parameters in a regime switching extension of Cobb–Douglas production rate function.
\end{abstract}

\maketitle
	
\noi {\bf Key words: } Regime switching models, Growth models, Risk sensitive Preferences, Optimal consumption, Euler equation\\
\noi {\bf AMSC: } 91B62, 91B55, 91B70, 60J10, 90C40
\section{Introduction}

\noindent Stochastic optimal growth model pioneered by \cite{bm}, has received considerable attention in the economics or related literature. We consider, in this paper, a regime switching optimal growth model for a single sector in the discrete time settings where the agent has risk sensitive preferences. Markov regime-switching models have important applications in economics and econometrics (see, for example, \cite{ham1989,ham2016}). It is assumed that at the beginning of a trading day an economic agent can access to information about the income from the previous investment and the current economic state. Given this information, the agent invests a portion of the  previous income in the production technology and consumes the rest. Depending on the investment and the economic state, the next random income is realized at the beginning of the next day. The next market state is also observed simultaneously. This cycle goes on. The agent's objective is to maximize the discounted risk sensitive non-expected utility of consumption in the infinite time horizon. \\

\noindent A similar problem has been studied in \cite{bjjet} assuming absence of transitions in the economic state. In other words the productivity shocks have been assumed to be i.i.d., instead of following Markovian dynamics. We extend the results of \cite{bjjet} to include the regime switching scenarios, since many state of the art macroeconomic models consider Markov shocks in productivity rate. For example, \cite{m}  considered a real business cycles (RBC) model, where technological disturbances including interest-rate and productivity shocks were assumed to be governed by two-state Markov chain, (see Page 802 therein). \cite{s1994} discussed a RBC theory in which productivity shocks were supposed to follow a stationary Markov process, (see Page 1754 therein). \cite{vv} assumed that the technology shock follows a two-state Markov switching process when discussing asymmetries in a RBC model. \cite{h} adopts a Markov process for modelling persistency in productivity shocks when investigating optimal environmental policies responding to business cycles. \cite{at} supposed that an aggregate productivity shock was governed by a first-order Markov process when studying polarized business cycles. The Markov regime-switching model for optimal growth considered here can capture an important aspect of economic growth, namely the impacts of transitions in different phases of business cycles such as expansion and recession on the growth in an unified setting. Specifically, the model may provide theoretical insights into making investment-consumption decisions with the objective of achieving sustainability in the economic growth in the presence of changing economic regimes. We describe probabilistic transitions in economic regimes using a discrete-time, finite-state, Markov chain whose probability laws are given. To the best of our knowledge, this problem has not been studied in the literature.\\

\noindent For the sake of generality, in this paper, the production function and the utility function are allowed to be unbounded from above. We refer to \cite{bjjet}, \cite{d2}, \cite{kami} and \cite{w} and references therein for similar considerations. We follow the weighted supremum norm approach for studying the optimization problem. As in \cite{hs} and \cite{bjjet} the finite horizon objective function has been defined recursively. Its limiting value is then shown to exist which gives rise to the non-expected discounted utility in the infinite time horizon. We then study the corresponding optimality equation using the Banach fixed point theorem. To show that the dynamic programming operator maps a space of certain functions into itself, we consider the class of  concave, non-decreasing and non-negative functions with weighted supremum norm as in \cite{bjjet}. In \cite{bjjet} the contraction property of the operator has been established by applying Chebyshev's association inequality of random variables. We have first extended that association inequality to the regime switching case (see Lemma \ref{lem-covl}) and then  applied the inequality for establishing the contraction mapping. Furthermore, it is shown that the optimal investment policy exists in the class of stationary policies.\\

\noindent The Euler equation for optimal consumption is also considered. An Euler equation generally states that the average gain in utility for saving now and then consuming in future, instead of immediate consumption should, after discounting, be equal to the utility gain of consuming now. The readers can find the study of Euler equation in \cite{bm} and \cite{kami}. It is interesting to note that under the present settings for a nonzero risk aversion parameter, the Euler equation involves an expectation with respect to a probability measure different from the standard equilibrium market measure. Specifically, under an optimal consumption policy, the measure depends on the value function. Consequently, higher (lower) probabilities are assigned to the scenarios corresponding to smaller (larger) values of the value function. In view of this, such measures are also referred to as the worst-case measure (see \cite{hs2007}).\\

\noindent In addition to the above results, we have also established that, under additional assumptions, the joint process consisting the optimal income and the underlying economic state has a non-trivial joint stationary distribution. This is accomplished as an application of a result from \cite{mt}. To facilitate the application of this result, we prove that the stochastic kernel of the joint process is Feller and bounded in probability. \\

\noindent Numerical results based on hypothetical parameter values in a parametric model with three regimes of the production rate are provided to illustrate the impacts of changes in the production rate regime on the optimal investment-income ratio and the value function. To this end, a regime switching extension of the Cobb–Douglas production function and the power utility is considered. The numerical results are presented graphically along with some insightful economical interpretations. In this connection a fair comparison is added between this and the fixed regime counter part examined in \cite[Example 1]{bjjet}. Prior to choosing specific numerical values for computation, we identify  the range of parameter values for which all the theoretical assumptions in this paper are fulfilled.\\

\noindent The rest of this paper is organized as below. In Section 2 we present the regime switching investment-consumption model for a single sector. The optimization problem is formulated in Section 3. In Section 4 the optimality equation of the value function is obtained. Moreover, the existence of the optimal stationary policy is also established. The Euler equation for optimal consumption is derived in Section 5. The establishment of the existence of the joint stationary distribution for the optimal income process and the underlying economic state is presented in Section 6. The numerical results and their interpretations for a natural regime switching extension of a model with Cobb–Douglas production function, coupled with a power utility are discussed in Section 7. We conclude the paper in Section 8 which contains some directions of future investigations. For the sake of self-containedness some results with proofs have been added at the end in the Appendix section.

\section{Multiperiod Growth and Consumption Model}
\noindent In this section, a Markov regime-switching extension to the stochastic optimal growth model with risk sensitive preferences in \cite{bjjet} is provided.
Firstly, some notations are defined.
$\mathbb{N}$ denote the set of positive integers. A complete probability space $(\Omega, {\cal F}, \mathbb{P})$ is considered on which all the random processes are defined. It is supposed that at each time epoch $k \in \mathbb{N}$, an economic agent has an income of the amount $x_k$ and wishes to allocate the income between consumption and investment. Let $a_k$ and $y_k$ denote the consumption and investment at the time epoch $k$, respectively. Assume that the investment $y_k$ is used as an input for production and that this input is a key factor influencing the output at the next time epoch $k+1$, which gives rise to the income of the agent $x_{k+1}$. Furthermore, it is supposed that the output $x_{k+1}$ is also influenced by the observable market regime at the time epoch $k$, which is described by $\theta_k$. To take account of random perturbation, it is assumed that the income dynamics are governed by the following regime dependent production equation:
$$ x_{k+1}=f(\theta_k, y_k,\xi_k).$$
Here $(\xi_k)_{k\in\mathbb{N}}$
is a sequence of independent and identically distributed (i.i.d.) random variables taking their values in $[0,\infty)$ with common distribution $\nu$. Note that $\xi_k$ represents the random shock at the time epoch $k$. $\Theta := \{ \theta_k \}_{k \in \mathbb{N}}$ is a discrete-time time-homogeneous Markov chain with a finite state space ${\cal S}$. The probability laws of the chain are described by a given transition probability matrix $p
:= (p_{\theta \theta'})_{{\cal S}\times {\cal S}}$. The initial wealth
$x_1$ is assumed to be drawn from a given  distribution $p^0$ on $[0,\infty)$. and
$$f:\mathcal{S}\times [0,\infty) \times [0,\infty) \to [0,\infty),$$
is a Markov regime-switching production function. We assume that the regime-switching production function possesses the following property.
\begin{enumerate}
  \item[(F1)] For every $z\ge 0$ and for each $\theta\in \mathcal{S}$  the function $f(\theta,\cdot,z):[0,\infty) \to [0,\infty)$ is non-decreasing,
  continuous, concave,  and for every $y\ge 0$ the function
  $f(\theta,y,\cdot):[0,\infty) \to [0,\infty)$ is Borel measurable.
\end{enumerate}

\noindent Note that the above assumption on the production function is reasonable because the non-decreasing property of the production function is due to the fact that the output does not decrease as the input increases; the concavity property of the production function is due to the diminishing marginal productivity; the Borel measurability of the production function is a technical property so that the expectation of the production function with respect to the random shock is well-defined.\\

\noindent Let $D$ denote the following closed lower triangle
$$D:=\{(x,y):\; x\in[0,\infty),\
y\in[0,x]\}.$$
Then the history process $h_k$ of the income-regime-investment till time $k\in\mathbb{N}$ is given by
 $$h_k=\left\{\begin{array}{l@{\quad \quad}l}
(x_1,\theta_1),& \mbox{ for } k=1,\\
 (x_1, \theta_1,y_1, x_2,\theta_2,y_2,\ldots, x_{k-1},  \theta_{k-1},y_{k-1}, x_k,\theta_k), & \mbox{ for } k\ge2,
 \end{array}\right.$$
\noindent where $(x_i,y_i)\in D$  for all  $i\in \mathbb{N}$. The last constraint is due to the fact that every time, only the income is invested after subtracting a non-negative amount for consumption. Hence, the production input $y_k$ at the time epoch $k$ is constrained by the income $x_k$ at that time epoch. Let the set $H_k\subset [0,\infty)\times \mathcal{S} \times ( [0,\infty)\times [0,\infty)\times \mathcal{S} )^{k-1}$ denote the set of all feasible histories till time $k\ge 1$. The Borel sigma field on $H_k$ be denoted by $\mathcal{H}_k$. An  \emph{ investment policy} $\pi$ is a sequence $(\pi_k)_{k\in \mathbb{N}},$ where for each $k\in \mathbb{N}$ $\pi_k:H_k\to [0,\infty)$ is
a $\mathcal{H}_k$-measurable mapping so that $(x_k, \pi_k(h_k))\in D$.
The collection of all investment policies is denoted by $\Pi$. A policy $\pi$ is said to be Markov if there exists a sequence of Borel measurable  maps $\phi_k$ in
$$\Phi:=\{\phi: [0,\infty)\times \mathcal{S} \to [0,\infty) \textrm{ Borel measurable}\mid (x, \phi(x,\theta))\in D\; \forall x\in [0,\infty), \theta\in \mathcal{S} \}$$
such that $\pi_k(h_k) =\phi_k(x_k,\theta_k)$ for every $k\in \mathbb{N}$. A Markov policy $\{\phi_k\}_k$ is said to be stationary if there exists a fixed $\phi\in \Phi$, so that $\phi_k=\phi$ for all $k\in \mathbb{N}$.
Conventionally, we identify a stationary policy with the mapping $\phi\in \Phi$. Thus the set of all stationary investment policies is also denoted by $\Phi(\subset \Pi)$.\\

\noindent The utility function $u:[0,\infty)\to [0,\infty)$ is assumed to be
\begin{enumerate}
    \item[(U1)] strictly concave, increasing, continuous at zero and
  $u(0)=0.$
\end{enumerate}
In addition to above, we assume that there exists a continuous function $w: [0,\infty) \times \mathcal{S}  \to  [1,\infty)$ which is non-decreasing in the first variable for each $\theta\in \mathcal{S}$ such that,
\begin{enumerate}
  \item[(U2)] for a fixed  constant $d>0$
  $$u(a)\le dw(a,\theta),\quad \mbox{for all } \theta \in \mathcal{S}, a\in [0,\infty);$$
\item[(F2)]  for a fixed  constant $\alpha\in (0, 1/\beta)$,
\begin{equation}
\nonumber\label{F2}
\sup_{y\in [0,x]} \sup_{\theta'\in\mathcal{S}} \int_{[0,\infty)} w(f(\theta,y,z),\theta')\nu(dz)\le \alpha w(x,\theta) \quad \mbox{for all } \theta \in \mathcal{S}, x\in [0,\infty),
  \end{equation}
\noindent where $\beta\in (0,1)$ denotes the one-period discounting factor.
\end{enumerate}
\begin{rmk}
If $u$ is a bounded utility, then (U2) and (F2) are trivially true with a constant function  $w = \|u\|_{\sup_{}}$, and  $d=\alpha=1$. However, for unbounded utility, (F2) puts a constraint on $f$. For illustrating the feasibility of (U2)-(F2) in the unbounded utility case, we consider the following toy example. Let $u(a)=\ln(1+a)$, and $f(\theta,y,z)=y$ for all $x,y,z\ge 0, \theta \in \mathcal{S}$. Then (U2)-(F2) hold with $w(x,\theta)=1+\ln(1+x)$, and $d=\alpha=1<1/\beta$ for any  $\beta\in (0,1)$.
\end{rmk}

\noindent Before ending this section we introduce few more notations which are essential for the subsequent sections. Often we write $w_\theta(x)$ instead of $w(x,\theta)$, given in (U2)-(F2) for notational convenience. We define for each $k\in \mathbb{N}$, using $w$, the following space of functions
$$
B_w(H_k):=\left\{v: H_k\to  [0,\infty)  \textrm{ Borel measurable }\mid \sup_{h_k\in H_k}  \frac{v(h_k) }{w(x_k,\theta_k)}<\infty
\right\}.
$$
Thus for every $v_k\in B_w(H_k)$, there exists a constant $d_{v_k}\ge 0$ such that
$$v_k(h_k)\le d_{v_k}w(x_k,\theta_k) = d_{v_k}w_{\theta_k}(x_k) \quad \textrm{ for every }  h_k\in H_k.$$

\noindent The $w$-norm of a Borel measurable function $v: [0,\infty) \times \mathcal{S}  \to {\mathbb R}$ is defined as
\begin{equation}
 \| v\|_w := \sup_{x\in[0,\infty), \theta \in \mathcal{S}}\frac{|v(x,\theta)|}{w(x,\theta)}.
\end{equation}
Then, $(B_w,\|\cdot\|_w)$ is a Banach space (see Proposition 7.2.1 in \cite{hl}), where
$$
B_w =\left\{ v: [0,\infty) \times \mathcal{S} \to {\mathbb R} \textrm{ Borel measurable }\mid \| v\|_w<\infty\right\}.
$$
We further consider a closed subset of $B_w$ by
$$ {\cal B}:= \left\{v \in B_w \mid \mbox{for each $\theta$, } v(\cdot,\theta) \;\mbox{is non-decreasing, non-negative, continuous,
concave} \right\}.$$
Clearly, $({\cal B},\|\cdot\|_w)$ is a complete convex metric space. As in \cite{bjjet}
we study maximization of the agent's non-expected utility defined via the entropic risk measure in the infinite time horizon case. The objective function will be presented in the following section.

\section{The Optimization Problem}\label{sec:optimization}

\noindent For each $h_k\in H_k$ and $\pi= (\pi_k)_{k\in\mathbb{N}} \in \Pi$, let the map
$\rho_{\pi_k,h_k}: B_w(H_{k+1})\to [0,\infty)$
be given by
\begin{align}\label{ece}
 \nonumber   \rho_{\pi_k,h_k}(v_{k+1}) &:= -\frac 1\gamma\ln \left[\sum_{\theta'\in\mathcal{S}} p_{\theta_k \theta'} \int_{[0,\infty)} e^{-\gamma v_{k+1}(h_k,\pi_k(h_k), f(\theta_k,\pi_k(h_k),z),\theta')}\nu(dz) \right]\\
 & =  -\frac{1}{\gamma} \ln E[e^{-\gamma v_{k+1}(h_k,\pi_k(h_k), f(\theta_k,\pi_k(h_k),\xi_k),\theta_{k+1})}\mid h_k].
\end{align}
Some important properties of $\rho_{\pi_k,h_k}$ are listed below whose proofs are deferred to the Appendix.
\begin{thm} \label{P} Let $v, v' \in B_w(H_{k+1})$. For each $h_k\in H_k$ and $\pi= (\pi_k)_{k\in\mathbb{N}} \in \Pi$,
\begin{enumerate}[(i)]
    \item $0= \rho_{\pi_k,h_k}({\bf 0}) \le \rho_{\pi_k,h_k}(v)$ if $v\ge {\bf 0}$, where ${\bf 0}$ is the zero function, i.e., ${\bf 0}(h_k)\equiv 0$ for any $k\ge 1$ and any $h_k\in H_k$;
    \item $\rho_{\pi_k,h_k}$ is monotonic, i.e., if $v\le v'$, then $\rho_{\pi_k,h_k}(v)\le \rho_{\pi_k,h_k}(v')$;
  \item   $\rho_{\pi_k,h_k}(v) \leq E[v(h_k,\pi_k(h_k), f(\theta_k,\pi_k(h_k),\xi_k),\theta_{k+1})\mid h_k]$;
    \item $\rho_{\pi_k,h_k}$ is concave, i.e.,$\rho_{\pi_k,h_k}(\lambda v+(1-\lambda)v')\ge \lambda\rho_{\pi_k,h_k}(v)+(1-\lambda)\rho_{\pi_k,h_k}(v')$
  for any $\lambda\in[0,1]$;
\item under (F2), $\rho_{\pi_k,h_k}(v) \le  d_{v} \alpha w_{\theta_k}(x_k)$ for some  constant $d_{v}\ge 0$;
\item $\rho_{\pi_k,h_k}(\mu v) -\mu \rho_{\pi_k,h_k}(v) \left\{\begin{array}{cc}
\ge 0  & \textrm{ if } \mu \in [0,1] \\
\le 0  & \textrm{ if } \mu \ge 1.
\end{array}\right.$
\end{enumerate}
\end{thm}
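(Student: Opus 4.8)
The plan is to reduce every item to elementary monotonicity/convexity properties of $x\mapsto e^{-\gamma x}$ and $x\mapsto\ln x$ (recall $\gamma>0$) together with the conditional forms of Jensen's and H\"older's inequalities. Throughout, fix $h_k\in H_k$ and $\pi\in\Pi$, and abbreviate by $V:=v(h_k,\pi_k(h_k),f(\theta_k,\pi_k(h_k),\xi_k),\theta_{k+1})$ and $V':=v'(h_k,\pi_k(h_k),f(\theta_k,\pi_k(h_k),\xi_k),\theta_{k+1})$ the random variables (conditionally on $h_k$) sitting inside the exponentials, so that $\rho_{\pi_k,h_k}(v)=-\tfrac1\gamma\ln E[e^{-\gamma V}\mid h_k]$. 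Note that $V,V'\ge 0$ since every element of $B_w(H_{k+1})$ is $[0,\infty)$-valued; hence $e^{-\gamma V}\in(0,1]$ and $\rho_{\pi_k,h_k}(v)$ is a well-defined element of $[0,\infty)$.

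Items (i)--(iv) are then essentially computations. For (i), $\rho_{\pi_k,h_k}(\mathbf 0)=-\tfrac1\gamma\ln E[1\mid h_k]=0$, while $v\ge\mathbf 0$ forces $E[e^{-\gamma V}\mid h_k]\le 1$, hence $\rho_{\pi_k,h_k}(v)\ge 0$. Item (ii) follows because $v\le v'$ implies $V\le V'$, so $e^{-\gamma V}\ge e^{-\gamma V'}$, and $-\tfrac1\gamma\ln(\cdot)$ is non-decreasing. For (iii) apply the conditional Jensen inequality to the convex map $x\mapsto e^{-\gamma x}$, getting $E[e^{-\gamma V}\mid h_k]\ge e^{-\gamma E[V\mid h_k]}$, take logarithms, and multiply by $-\tfrac1\gamma<0$ (the estimate being trivial when $E[V\mid h_k]=\infty$). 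For (iv), use $e^{-\gamma(\lambda V+(1-\lambda)V')}=\bigl(e^{-\gamma V}\bigr)^{\lambda}\bigl(e^{-\gamma V'}\bigr)^{1-\lambda}$ and the conditional H\"older inequality with exponents $1/\lambda$ and $1/(1-\lambda)$,
\[
E[e^{-\gamma(\lambda V+(1-\lambda)V')}\mid h_k]\ \le\ \bigl(E[e^{-\gamma V}\mid h_k]\bigr)^{\lambda}\bigl(E[e^{-\gamma V'}\mid h_k]\bigr)^{1-\lambda},
\]
then take $-\tfrac1\gamma\ln$ of both sides and use $\ln(ab^{c})=\ln a+c\ln b$ to obtain the claimed concavity.

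For (v) I would combine (iii) with assumption (F2): since $v\in B_w(H_{k+1})$ there is a constant $d_v\ge 0$ with $v(h_{k+1})\le d_v\,w(x_{k+1},\theta_{k+1})$ for all $h_{k+1}$, so using $x_{k+1}=f(\theta_k,\pi_k(h_k),\xi_k)$ and $\pi_k(h_k)\in[0,x_k]$,
\[
\rho_{\pi_k,h_k}(v)\ \le\ E[V\mid h_k]\ \le\ d_v\sum_{\theta'\in\mathcal S}p_{\theta_k\theta'}\int_{[0,\infty)}w\bigl(f(\theta_k,\pi_k(h_k),z),\theta'\bigr)\,\nu(dz);
\]
bounding the convex combination over $\theta'$ by the supremum over $\theta'\in\mathcal S$ and then the supremum over $y\in[0,x_k]$ and invoking (F2) yields $\rho_{\pi_k,h_k}(v)\le d_v\,\alpha\,w_{\theta_k}(x_k)$. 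Finally (vi) is a corollary of (i) and (iv): for $\mu\in[0,1]$, write $\mu v=\mu v+(1-\mu)\mathbf 0$ and apply (iv) with $\rho_{\pi_k,h_k}(\mathbf 0)=0$ to get $\rho_{\pi_k,h_k}(\mu v)\ge\mu\,\rho_{\pi_k,h_k}(v)$; for $\mu\ge 1$, set $\lambda=1/\mu\in(0,1]$ and apply (iv) to $v=\lambda(\mu v)+(1-\lambda)\mathbf 0$ to obtain $\rho_{\pi_k,h_k}(v)\ge\tfrac1\mu\rho_{\pi_k,h_k}(\mu v)$, i.e.\ $\rho_{\pi_k,h_k}(\mu v)\le\mu\,\rho_{\pi_k,h_k}(v)$.

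The only step that is not pure bookkeeping is (iv): one has to recognize that concavity of the exponential certainty equivalent in its argument is precisely the logarithmic form of H\"older's inequality, and one should be careful that all the expectations above are honest integrals against $\nu\otimes p_{\theta_k\cdot}$ conditionally on $h_k$, so that the conditional Jensen and H\"older inequalities apply verbatim; finiteness of the exponential moments is automatic from $0<e^{-\gamma V}\le 1$, and finiteness of the first moment needed in (v) comes from (F2).
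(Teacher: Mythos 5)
Your proposal is correct, and items (i), (ii), (iii), (v), (vi) run essentially parallel to the paper's proof: (i)--(ii) by monotonicity of the exponential and logarithm, (iii) by Jensen (you apply it to the convex map $x\mapsto e^{-\gamma x}$, the paper to the concave $\ln$ -- the same inequality either way), (v) by combining (iii) with the bound $v\le d_v w$ and (F2), and (vi) by specializing (iv) with $v'=\mathbf 0$ and, for $\mu\ge 1$, rescaling with $\lambda=1/\mu$, exactly as in the paper. The genuine difference is item (iv), which is the only nontrivial step: the paper proves concavity of the certainty-equivalent functional $F_p$ by computing its second-order Fr\'echet derivative on $B_{w_{\pi_k,h_k}}$ and showing $D^2F_p(g)(\eta,\eta)\le 0$ via the Cauchy--Schwarz inequality, whereas you obtain the same conclusion directly from H\"older's inequality applied to $e^{-\gamma(\lambda V+(1-\lambda)V')}=(e^{-\gamma V})^{\lambda}(e^{-\gamma V'})^{1-\lambda}$ under the conditional law $\nu\otimes p_{\theta_k\cdot}$. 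Your route is more elementary and arguably tighter: it avoids any appeal to differentiability of a functional on a Banach space (the paper asserts the second-derivative formula ``by a direct calculation'' and implicitly uses that nonpositive second derivative on a convex set yields concavity), and integrability is automatic since the integrands lie in $(0,1]$, which you note. What the paper's calculus approach buys in exchange is an explicit expression for $D^2F_p$, which exhibits the variance-type structure of the entropic functional, but for the statement at hand the H\"older argument is a complete and cleaner substitute; note also that $\lambda\in\{0,1\}$ is trivial, so your restriction to exponents $1/\lambda$, $1/(1-\lambda)$ loses nothing.
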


\noindent As in \cite{hs} and \cite{bjjet} we model the objectives of an economic agent recursively. In other words, we introduce the objective function using a sequence of operators
$\{L_{\pi_k}\}_{k=1}^\infty$ given by
\begin{align*}
(L_{\pi_k}v_{k+1})(h_k):=&u(x_k-\pi_k(h_k))+\beta\rho_{\pi_k,h_k}(v_{k+1})\\
=& -\frac{\beta}{\gamma} \ln E[e^{-\gamma\left( \frac{1}{\beta}u(x_k-\pi_k(h_k))+v_{k+1}(h_k,\pi_k(h_k), f(\theta_k,\pi_k(h_k),\xi_k),\theta_{k+1}) \right)}\mid h_k]
\end{align*}
where $\beta\in (0,1)$ as described in (F2) and $v_{k+1}\in B_w(H_{k+1})$.

\begin{lem} \label{L} Assume that the conditions (U2) and (F2) hold. Then\\
(i) $L_{\pi_k}:B_w(H_{k+1}) \to B_w(H_{k})$ is well defined, and\\
(ii) $L_{\pi_k}$ is monotone.
\end{lem}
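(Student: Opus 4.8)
\noindent\emph{Proof plan.} For part (i) the plan is to verify the three properties defining membership of the function $h_k\mapsto (L_{\pi_k}v_{k+1})(h_k)$ in $B_w(H_k)$: Borel measurability, non-negativity, and finiteness of the weighted supremum $\sup_{h_k\in H_k}(L_{\pi_k}v_{k+1})(h_k)/w(x_k,\theta_k)$. Part (ii) will then follow immediately from the monotonicity of the maps $\rho_{\pi_k,h_k}$.

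Non-negativity and the upper bound come straight from Theorem \ref{P}. Since $v_{k+1}\in B_w(H_{k+1})$ is non-negative, Theorem \ref{P}(i) gives $\rho_{\pi_k,h_k}(v_{k+1})\ge 0$; combined with $u\ge 0$ and $\beta\in(0,1)$ this yields $(L_{\pi_k}v_{k+1})(h_k)\ge 0$ for every $h_k$. For the bound, $(x_k,\pi_k(h_k))\in D$ forces $\pi_k(h_k)\in[0,x_k]$, so the monotonicity of $u$ from (U1) together with (U2) gives $u(x_k-\pi_k(h_k))\le u(x_k)\le d\,w_{\theta_k}(x_k)$, while Theorem \ref{P}(v), which invokes (F2), supplies a constant $d_{v_{k+1}}\ge 0$ with $\rho_{\pi_k,h_k}(v_{k+1})\le d_{v_{k+1}}\,\alpha\, w_{\theta_k}(x_k)$. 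Adding these two estimates gives $(L_{\pi_k}v_{k+1})(h_k)\le (d+\beta\alpha\, d_{v_{k+1}})\,w_{\theta_k}(x_k)$ for all $h_k\in H_k$, so the required weighted supremum is finite.

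The substantive step is the $\mathcal{H}_k$-measurability of $h_k\mapsto (L_{\pi_k}v_{k+1})(h_k)$. The summand $h_k\mapsto u(x_k-\pi_k(h_k))$ is measurable because $\pi_k$ is $\mathcal{H}_k$-measurable and $u$ is continuous. For the entropic summand, set $g_{\theta'}(h_k,z):=\exp\bigl(-\gamma\, v_{k+1}(h_k,\pi_k(h_k),f(\theta_k,\pi_k(h_k),z),\theta')\bigr)$, so that $\rho_{\pi_k,h_k}(v_{k+1})= -\tfrac{1}{\gamma}\ln\bigl(\sum_{\theta'\in\mathcal{S}} p_{\theta_k\theta'}\int_{[0,\infty)} g_{\theta'}(h_k,z)\,\nu(dz)\bigr)$. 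I would first observe that $(h_k,z)\mapsto f(\theta_k,\pi_k(h_k),z)$ is jointly measurable: writing it as $\sum_{\theta\in\mathcal{S}}\mathbf{1}_{\{\theta_k=\theta\}}\,f(\theta,\pi_k(h_k),z)$ and using that each map $(y,z)\mapsto f(\theta,y,z)$ is jointly Borel by (F1) (continuous in $y$ and Borel in $z$, hence a Carath\'eodory function), together with the $\mathcal{H}_k$-measurability of $\pi_k$. Consequently $(h_k,z)\mapsto (h_k,\pi_k(h_k),f(\theta_k,\pi_k(h_k),z),\theta')$ is measurable into $H_{k+1}$; composing with the Borel function $v_{k+1}$ and then with $t\mapsto e^{-\gamma t}$ shows $g_{\theta'}$ is jointly measurable, and since $v_{k+1}\ge 0$ and $\gamma>0$ one has $0<g_{\theta'}\le 1$. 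By Tonelli's theorem $h_k\mapsto\int_{[0,\infty)}g_{\theta'}(h_k,z)\,\nu(dz)$ is measurable with values in $(0,1]$; as $\theta_k$ is a coordinate of $h_k$ and $\mathcal{S}$ is finite, the finite sum $\sum_{\theta'\in\mathcal{S}} p_{\theta_k\theta'}\int_{[0,\infty)} g_{\theta'}(h_k,z)\,\nu(dz)$ is measurable and lies in $(0,1]$ (the rows of $p$ sum to $1$ and at least one entry $p_{\theta_k\theta'}$ is positive). Composing with the continuous map $t\mapsto-\tfrac{1}{\gamma}\ln t$ on $(0,1]$ gives measurability of $h_k\mapsto\rho_{\pi_k,h_k}(v_{k+1})$, hence of $L_{\pi_k}v_{k+1}$, which completes (i).

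For (ii), if $v_{k+1}\le v'_{k+1}$ pointwise on $H_{k+1}$ then Theorem \ref{P}(ii) gives $\rho_{\pi_k,h_k}(v_{k+1})\le\rho_{\pi_k,h_k}(v'_{k+1})$ for every $h_k$; since the $u$-term is unaffected and $\beta>0$, it follows that $L_{\pi_k}v_{k+1}\le L_{\pi_k}v'_{k+1}$, which is the asserted monotonicity. The only genuinely delicate point in this argument is the joint-measurability step behind (i); everything else is a direct consequence of Theorem \ref{P} and the standing hypotheses (U1), (U2), (F1) and (F2).
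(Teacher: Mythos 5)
Your proof is correct and follows essentially the same route as the paper: the bound $(L_{\pi_k}v_{k+1})(h_k)\le (d+\alpha\beta d_{v_{k+1}})w_{\theta_k}(x_k)$ via (U2) and Theorem \ref{P}(v), and monotonicity via Theorem \ref{P}(ii) with $\beta>0$. The only difference is that you additionally verify the Borel measurability of $h_k\mapsto (L_{\pi_k}v_{k+1})(h_k)$ (Carath\'eodory/Tonelli argument), a point the paper's proof leaves implicit; this is a welcome extra detail rather than a different approach.
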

\begin{proof}
The fact that the range of $L_{\pi_k}$ is indeed in $B_w(H_{k})$ can be verified in the following manner. Assuming (U2) and (F2) and using  Theorem \ref{P}(v), we get for every $h_k\in H_k$ and any $k\in\mathbb{N}$, that
\begin{equation}
\label{L_ogr}
0\le (L_{\pi_k}v_{k+1})(h_k)\le d w_{\theta_k}(x_k- \pi_k(h_k))  +\alpha\beta d_{v_{k+1}}w_{\theta_k}(x_k) \le (d+\alpha\beta d_{v_{k+1}})w_{\theta_k}(x_k),
\end{equation}
as $w_\theta$ is non-decreasing. Thus, the map $$L_{\pi_k}: B_w(H_{k+1}) \ni v_{k+1} \mapsto  L_{\pi_k}v_{k+1} \in B_w(H_k),  $$ is well defined.
Moreover, by Theorem \ref{P} (ii) and using $\beta \in (0, 1)$ under (F2), we directly get that
$(L_{\pi_k}v_{k+1})(h_k)\le (L_{\pi_k}{v'}_{k+1})(h_k)$ for $h_k\in H_k$ and $v_{k+1}\le {v'}_{k+1}$, i.e., $L_{\pi_k}$ is monotone.
\end{proof}
\noindent For any initial income $x_1=x$, state $\theta'= \theta$ and  $T\in {\mathbb N}$ we define $T$-stage total discounted utility
\begin{equation}\label{J_T}
J_T(x,\theta,\pi):=(L_{\pi_1}\circ \ldots \circ L_{\pi_T} ){\bf 0}(x,\theta)
\end{equation}
where ${\bf 0}(h_{T+1})=0$ for all $h_{T+1}\in B_w(H_{T+1})$. In particular $J_1(x,\theta,\pi)= L_{\pi_1} {\bf 0}(x,\theta)= u(x-\pi_1(x,\theta)) + \beta\rho_{\pi_1,h_1}({\bf 0})= u(x-\pi_1(x,\theta))$.

\noindent Theorem \ref{thm4} below generalizes some results for performance functionals in Section 2 of \cite{bjjet} to a Markov regime-switching situation, where the performance functionals depend on the modulating Markov chain.
\begin{thm} \label{thm4}
For every $x\in[0,\infty)$, $\theta \in \mathcal{S}$ and $\pi\in\Pi$,\\
(i) the sequence $\{ J_T(x,\theta,\pi)\}_{T\in\mathbb{N}}$ is non-decreasing and is non-negative;\\
(ii) and for each $T\in\mathbb{N}$
$$
J_T(x,\theta, \pi)\le \frac {dw(x,\theta ) }{1-\alpha\beta}.
$$
(iii)
$\lim_{T\to\infty} J_T(x,\theta, \pi)$ exists for every $x\in [0,\infty)$, $\theta \in \mathcal{S}$ and $\pi\in\Pi.$
\end{thm}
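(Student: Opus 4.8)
The plan is to obtain all three parts directly from Lemma \ref{L} and the estimate \eqref{L_ogr}, viewing $J_T(x,\theta,\pi)$ as the value at $(x,\theta)$ of the function $(L_{\pi_1}\circ\cdots\circ L_{\pi_T}){\bf 0}$ and exploiting monotonicity of the operators.

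For part (i), I would first note that $L_{\pi_{T+1}}{\bf 0}\ge {\bf 0}$ pointwise on $H_{T+1}$, since $(L_{\pi_{T+1}}{\bf 0})(h_{T+1})=u(x_{T+1}-\pi_{T+1}(h_{T+1}))+\beta\rho_{\pi_{T+1},h_{T+1}}({\bf 0})=u(x_{T+1}-\pi_{T+1}(h_{T+1}))\ge 0$ by (U1) and Theorem \ref{P}(i). Writing $J_{T+1}(x,\theta,\pi)=(L_{\pi_1}\circ\cdots\circ L_{\pi_T})(L_{\pi_{T+1}}{\bf 0})(x,\theta)$ and $J_T(x,\theta,\pi)=(L_{\pi_1}\circ\cdots\circ L_{\pi_T}){\bf 0}(x,\theta)$, the monotonicity of each $L_{\pi_k}$ (Lemma \ref{L}(ii)), hence of their composition, yields $J_{T+1}(x,\theta,\pi)\ge J_T(x,\theta,\pi)$. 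Non-negativity then follows from this together with $J_1(x,\theta,\pi)=u(x-\pi_1(x,\theta))\ge 0$.

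For part (ii), I would run a (backward) induction on the number of operators applied. Set $V^{(0)}:={\bf 0}\in B_w(H_{T+1})$ and $V^{(j)}:=L_{\pi_{T+1-j}}V^{(j-1)}$ for $j=1,\dots,T$, so that $J_T(x,\theta,\pi)=V^{(T)}(x,\theta)$. I claim $V^{(j)}(h_{T+1-j})\le c_j\, w_{\theta_{T+1-j}}(x_{T+1-j})$ with $c_j:=d\sum_{i=0}^{j-1}(\alpha\beta)^i$. The base case $j=0$ is trivial ($c_0=0$), and the inductive step is precisely the estimate \eqref{L_ogr} applied with $v_{k+1}=V^{(j-1)}$, $d_{v_{k+1}}=c_{j-1}$ and $k=T+1-j$, using the identity $d+\alpha\beta c_{j-1}=c_j$. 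Since $\alpha<1/\beta$ gives $\alpha\beta\in(0,1)$, one has $c_T\le d/(1-\alpha\beta)$, and evaluating $V^{(T)}$ at $h_1=(x,\theta)$ delivers the bound $J_T(x,\theta,\pi)\le dw(x,\theta)/(1-\alpha\beta)$.

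Part (iii) is then immediate: by (i) the real sequence $\{J_T(x,\theta,\pi)\}_{T\in\mathbb{N}}$ is non-decreasing, and by (ii) it is bounded above by the finite quantity $dw(x,\theta)/(1-\alpha\beta)$; hence it converges. There is no genuine obstacle here, as all the analytic content is already packaged in Theorem \ref{P}(v) and \eqref{L_ogr}; the only point requiring a little care is the bookkeeping of indices and of the geometric constants $c_j$ in the induction for (ii).
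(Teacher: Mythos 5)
Your proposal is correct and follows essentially the same route as the paper: part (i) via Theorem \ref{P}(i), non-negativity of $u$, and monotonicity of the composed operators; part (ii) via iterated application of \eqref{L_ogr}; and part (iii) by monotone convergence of a bounded real sequence. The only cosmetic difference is that you track the geometric partial sums $c_j=d\sum_{i=0}^{j-1}(\alpha\beta)^i$ in the induction, whereas the paper plugs in the stable constant $d/(1-\alpha\beta)$ from the start and uses $d+\alpha\beta\, d/(1-\alpha\beta)=d/(1-\alpha\beta)$; both bookkeepings yield the same bound.
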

\begin{proof}
\noindent The objective function $J_T$, defined as compositions of $\{L_{\pi_k}\}$, is non-negative due to the non-negativity of $L_{\pi_k}$. Using Theorem \ref{P}(i) and the non-negativity of $u$, clearly ${\bf 0}\le L_{\pi_{T+1}}{\bf 0}$. Now by applying $ L_{\pi_1} \circ  \ldots \circ L_{\pi_{T}}$ on both sides and using the monotonic property Lemma \ref{L}(ii), we get $ L_{\pi_1} \circ  \ldots \circ L_{\pi_{T}} {\bf 0}\le L_{\pi_1} \circ  \ldots \circ L_{\pi_{T}}\circ L_{\pi_{T+1}}{\bf 0}$. That is $J_T(x,\pi)\le J_{T+1}(x,\pi)$. Thus (i) is true.\\

\noindent Since $\beta \in (0,1)$ and $\alpha \in (0,1/\beta)$, using (U2) and the non-decreasing property of the utility function we have
\begin{equation}\label{111}
L_{\pi_T}{\bf 0}(h_T)=u(x_T-\pi_T(h_T))\le u(x_T)\le dw_{\theta_T}(x_T)\le \frac{dw_{\theta_T}(x_T)}{1-\alpha\beta},\quad h_T\in H_T.
\end{equation}
Since $L_{\pi_{T-1}}$ is monotonic (Lemma \ref{L}(ii)), we get an inequality by applying $L_{\pi_{T-1}}$ on both sides of \eqref{111}. Then using (\ref{L_ogr}) with $k=T-1$, $v_T(h_T):=dw(x_T)/(1-\alpha\beta)$, and of course $d_{v_T} = \frac{d}{1-\alpha \beta}$, we obtain
\begin{align}
    L_{\pi_{T-1}} (L_{\pi_T} {\bf 0})(h_{T-1})\le  L_{\pi_{T-1}} \left(\frac{dw_{\theta_T}}{1-\alpha\beta}\right)(h_{T-1})\le
dw_{\theta_{T-1}}(x_{T-1})+\alpha\beta\frac{dw_{\theta_{T-1}}(x_{T-1})}{1-\alpha\beta}=\frac{dw_{\theta_{T-1}}(x_{T-1})}{1-\alpha\beta}.
\end{align}
Recall that $w_{\theta_{T-1}}(x_{T-1}) = w(x_{T-1}, \theta_{T-1})$. Continuing this procedure, we finally derive that
\begin{align}
 J_{T}(x,\theta,\pi)&=(L_{\pi_1}\circ \ldots \circ L_{\pi_T} {\bf 0})(x,\theta)
\le \cdots \le \frac{dw(x,\theta)}{1-\alpha \beta}.
\end{align}
From (i) and (ii), $J_T (x, \theta, \pi)$ is non-decreasing and bounded from above and hence $\lim_{T\to\infty} J_T(x,\theta, \pi)$ exists for every $x\in [0,\infty)$, $\theta \in \mathcal{S}$ and $\pi\in\Pi$.

\end{proof}

\noindent
{\it The problem statement.} For an initial income $x\in[0,\infty)$, state $\theta \in \mathcal{S}$ and policy $\pi\in\Pi$ the non-expected discounted utility in the infinite time horizon is given by
\begin{equation}
\label{J}
J(x,\theta,\pi):= \lim_{T\to\infty} J_T(x,\theta,\pi).
\end{equation}
The aim of the economic agent is to find an optimal value (the so-called value function) of
the non-expected discounted utility in the infinite time horizon and a policy  $\pi^* \in \Pi$ for which

$$J(x,\theta, \pi^*)=\sup_{\pi\in\Pi} J(x,\theta,\pi), \quad \mbox{for all } x\in[0,\infty), \theta \in \mathcal{S}.$$

\noindent We conclude this section by the following remarks.
\begin{rmk}
If $\pi= (\pi_k)_{k\in \mathbb{N}} \in \Phi$, then there exists ${\phi}: [0,\infty) \times \mathcal{S} \to [0,\infty)$ such that $\pi_k(h_k) = {\phi}(x_k,\theta_k)$ for all $k \in \mathbb{N}$. Consequently
\begin{eqnarray*}
J_2(x,\theta,\pi)&=&
(L_{\pi_1}\circ L_{\pi_2}){\bf 0}(x,\theta)=L_{\pi_1}( L_{\pi_2}{\bf 0})(x,\theta)\\
&=& u(x-\pi_1(x,\theta))-\frac\beta\gamma\ln\left[ \sum_{\theta'\in\mathcal{S}} p_{\theta \theta'} \int_{[0,\infty)} e^{-\gamma  (L_{\pi_2}{\bf 0})(x,\theta,\pi_1(x,\theta),f(\theta,\pi_1(x,\theta),z), \theta')}\nu(dz) \right]\\
&=&u(x-\pi_1(x,\theta))-\frac\beta\gamma\ln \left[ \sum_{\theta'\in\mathcal{S}} p_{\theta \theta'} \int_{[0,\infty)} e^{-\gamma \left\{ u(f(\theta,\pi_1(x,\theta),z)- \pi_2(x,\theta,\pi_1(x,\theta),f(\theta,\pi_1(x,\theta),z), \theta'))\right\} }\nu(dz) \right]\\
&=&u(x-{\phi}(x,\theta))-\frac\beta\gamma\ln \left[ \sum_{\theta'\in\mathcal{S}} p_{\theta \theta'} \int_{[0,\infty)} e^{-\gamma \left\{ u(f(\theta,{\phi} (x,\theta),z)- {\phi}(f(\theta,{\phi} (x,\theta),z), \theta'))\right\} }\nu(dz) \right]\\
&=&u(x-{\phi}(x,\theta))-\frac\beta\gamma\ln \left[ \sum_{\theta'\in\mathcal{S}} p_{\theta \theta'} \int_{[0,\infty)} e^{-\gamma \left\{ u(f(\theta,{\phi} (x,\theta),z)- \pi_1(f(\theta,{\phi} (x,\theta),z), \theta'))\right\} }\nu(dz) \right]\\
&=& u(x-{\phi}(x,\theta))-\frac\beta\gamma\ln\left[ \sum_{\theta'\in\mathcal{S}} p_{\theta \theta'} \int_{[0,\infty)} e^{-\gamma  J_1(f(\theta,{\phi} (x,\theta),z), \theta',\pi)}\nu(dz) \right].
\end{eqnarray*}
Similarly we can prove that for every $T\ge 1$ and $\pi \in \Phi$
\begin{eqnarray}\label{rem-J2}
J_{T+1}(x,\theta,\pi)&=& u(x-{\phi}(x,\theta))-\frac\beta\gamma\ln\left[ \sum_{\theta'\in\mathcal{S}} p_{\theta \theta'} \int_{[0,\infty)} e^{-\gamma  J_T(f(\theta,{\phi} (x,\theta),z), \theta',\pi)}\nu(dz) \right].
\end{eqnarray}
The above recursion generalizes the recursion for the performance functionals in Eq. (12) of  \cite{bjjet} to a Markov-regime-switching environment.
\end{rmk}

\begin{rmk}
\noindent The fact that the objective function $J_T$ is non-expected utility can be illustrated by further computation of $J_3$. To this end we introduce the notation $P^{\pi}$ to denote the distribution of $\{h_k\}_k$ where $y_k=\pi_k(h_k)$ for all $k\in \mathbb{N}$. Let $E^\pi$ denote the expectation w.r.t. $P^{\pi}$. Using these notations, we write below
\begin{eqnarray*}
J_3(x,\theta,\pi)&=&
(L_{\pi_1}\circ L_{\pi_2}\circ L_{\pi_3}){\bf 0}(x,\theta)=L_{\pi_1}( L_{\pi_2}\circ L_{\pi_3}{\bf 0})(x,\theta)\\
&=& -\frac{\beta}{\gamma} \ln E^{\pi}[e^{-\gamma\left( \frac{1}{\beta}u(x-y_1)+ L_{\pi_2}\circ L_{\pi_3}{\bf 0} (h_{2})\right)}\mid x,\theta]\\
&=& -\frac\beta\gamma\ln E^{\pi} \left[e^{-\gamma \left(\frac{1}{\beta}u(x-y_1)+ u(x_2- y_2) -\frac{\beta}{\gamma} \ln E^{\pi}[e^{-\gamma L_{\pi_3}{\bf 0} (h_{3})}\mid h_2]
\right)}\mid x,\theta \right]\\
&=& -\frac\beta\gamma\ln E^{\pi} \left[e^{-\gamma \left(\frac{1}{\beta}u(x-y_1)+ u(x_2- y_2) \right)} \left(E^{\pi}[e^{-\gamma u(x_3-\pi_3(h_3))}\mid h_2]
\right)^\beta \mid x,\theta \right]\\
&=& -\frac\beta\gamma\ln E^{\pi} \left[ \left(E^{\pi}\left[e^{-\gamma \left(\frac{1}{\beta^2}u(x-y_1)+ \frac{1}{\beta}u(x_2- y_2) + u(x_3-y_3)\right)}\mid h_2\right]
\right)^\beta \Big| x,\theta \right].\end{eqnarray*}
Clearly for $\beta\neq 1$, the above expression cannot be written as a conditional expectation of an utility function.
\end{rmk}

\section{Value Function and Optimal Stationary Policy}
\noindent This section is dedicated for establishing the following theorem. This asserts that the non-expected discounted utility in the infinite time horizon (as defined in earlier section) can be maximized by a stationary investment policy. Moreover, the optimal utility value function and the optimal stationary policy can be obtained by solving a Bellman equation.
\begin{thm}\label{mainthm}
 Assume (U1)-(U2) and (F1)-(F2). Then, the following statements are true.
\begin{itemize}
\item[(a)] There exists a unique function $V\in {\cal B}$ such that
  \begin{eqnarray}
  \label{oe}
  V(x,\theta )&=&\sup_{y\in[0,x]}\left(u(x-y)-
\frac\beta\gamma\ln\left[\sum_{\theta'\in\mathcal{S}} p_{\theta \theta'} \int_{[0,\infty)} e^{-\gamma V(f(\theta,y,z),\theta')}\nu(dz) \right] \right)
\end{eqnarray}
for each $x\in [0,\infty)$ and $\theta \in \mathcal{S}$. Moreover, $V(\cdot,\theta)$ is strictly concave for each $\theta$.
\item[(b)] Let $V\in {\cal B}$ be as in \eqref{oe}, then $\widehat{V}: [0,\infty) \times \mathcal{S} \to {\mathbb R}$, given by
\begin{equation}
\label{b}
\widehat{V}(y,\theta):= -\frac 1\gamma\ln\sum_{\theta'\in \mathcal{S}} p_{\theta \theta'} \int_{[0,\infty)} e^{-\gamma V(f(\theta,y,z),\theta')}\nu(dz),
\end{equation}
is strictly concave, continuous and non-decreasing. There exists a unique ${\phi}^*\in \Phi$ such that
\begin{eqnarray}\label{oe_max}
V(x,\theta)&=& u(x-{\phi}^* (x,\theta))-\frac\beta\gamma\ln \sum_{\theta'\in\mathcal{S}} p_{\theta \theta'}\int_{[0,\infty)}e^{-\gamma V(f(\theta,{\phi}^*(x,\theta),z),\theta')}\nu(dz).
\end{eqnarray}
Moreover, the functions $x\mapsto {\phi}^*(x,\theta)$ and
  $x\mapsto c^*(x,\theta):=x-{\phi}^*(x,\theta)$ are continuous and non-decreasing for each $\theta$.
   \item[(c)] $V(x,\theta)=\sup_{\pi\in \Pi} J(x,\theta,\pi)=J(x,\theta, {\phi}^*)$ for all $x\in [0,\infty)$ and for all $\theta \in {\cal S}$, i.e.\ there exists an optimal stationary policy ${\phi}^*$.
\end{itemize}
\end{thm}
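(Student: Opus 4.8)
The plan is to establish (a), (b), (c) in that order, treating the dynamic programming operator on the space $(\mathcal{B},\|\cdot\|_w)$ as the central object. Define the operator $T$ on $\mathcal{B}$ by
\[
(Tv)(x,\theta):=\sup_{y\in[0,x]}\left(u(x-y)-\frac{\beta}{\gamma}\ln\left[\sum_{\theta'\in\mathcal{S}}p_{\theta\theta'}\int_{[0,\infty)}e^{-\gamma v(f(\theta,y,z),\theta')}\nu(dz)\right]\right).
\]
First I would check that $T$ maps $\mathcal{B}$ into $\mathcal{B}$: nonnegativity and the $w$-norm bound follow from Theorem \ref{P}(i),(v) and (U2)--(F2) exactly as in the estimate \eqref{L_ogr} in Lemma \ref{L}; monotonicity in $x$ is clear because the feasible set $[0,x]$ grows and $u$ is nondecreasing; continuity and concavity of $x\mapsto(Tv)(x,\theta)$ require more care. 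For concavity one uses that $y\mapsto\widehat v(y,\theta)$ (defined as in \eqref{b}) is concave — this is where Theorem \ref{P}(iv) enters, since $\widehat v$ is, up to the sign and the $\ln\sum$, the map $\rho$ applied to $v$ composed with the concave function $f(\theta,\cdot,z)$ — and then the standard fact that the sup over $y\in[0,x]$ of a jointly concave function of $(x,y)$ is concave in $x$. Continuity of $Tv$ then follows from a maximum-theorem argument (the correspondence $x\mapsto[0,x]$ is continuous and compact-valued, the integrand is continuous by (F1) and dominated convergence using (F2)).

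The key analytic step is the contraction estimate. Here I would invoke the regime-switching association/covariance inequality (Lemma \ref{lem-covl}, referenced in the introduction) to show that for $v,v'\in\mathcal{B}$,
\[
\|Tv-Tv'\|_w\le \alpha\beta\,\|v-v'\|_w,
\]
with $\alpha\beta<1$ by (F2). The mechanism is: bound the difference of the two suprema by the sup over $y$ of $\beta|\widehat v(y,\theta)-\widehat{v'}(y,\theta)|$; then $|\widehat v-\widehat{v'}|$ at a point is controlled, via the entropic-risk representation and a mean-value/convexity argument, by an expectation of $|v-v'|\le\|v-v'\|_w\,w$ against a measure built from the $p_{\theta\theta'}$ and a Radon–Nikodym-type reweighting of $\nu$; the association inequality lets one decouple this weighted expectation and bound it by $\|v-v'\|_w$ times $\sup_y\sup_{\theta'}\int w(f(\theta,y,z),\theta')\nu(dz)\le\alpha w(x,\theta)$, which after dividing by $w(x,\theta)$ and taking sup gives the factor $\alpha\beta$. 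I expect this to be the main obstacle: getting the reweighted measure to retain a monotone/comonotone structure so that Lemma \ref{lem-covl} applies, and handling the $\ln$ and the supremum simultaneously without losing the correct constant. Banach's fixed point theorem on the complete metric space $(\mathcal{B},\|\cdot\|_w)$ then yields a unique $V\in\mathcal{B}$ with $TV=V$, which is \eqref{oe}. Strict concavity of $V(\cdot,\theta)$ is obtained afterward: since $u$ is strictly concave (U1) and $V=TV$, the optimizing decomposition $x=c^*+y^*$ together with strict concavity of $u$ in the consumption part forces strict concavity of $V$ in $x$ (one shows the maximizer cannot make $u(x-y)$ locally affine along with $\widehat V$ simultaneously collapse).

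For (b), with the fixed point $V$ in hand, $\widehat V$ inherits continuity, monotonicity in $y$, and strict concavity from $V$ via (F1) and the properties of $\rho$ (Theorem \ref{P}); strict concavity of $\widehat V$ in $y$ is the crucial point and comes from strict concavity of $V(\cdot,\theta')$ composed with the concave $f(\theta,\cdot,z)$ together with strict convexity of $t\mapsto e^{-\gamma t}$. Then for fixed $(x,\theta)$ the map $y\mapsto u(x-y)+\beta\widehat V(y,\theta)$ on the compact interval $[0,x]$ is continuous and strictly concave, hence attains its maximum at a unique point, which we call $\phi^*(x,\theta)$; this defines $\phi^*$ and gives \eqref{oe_max}. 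Measurability of $\phi^*$ follows from a measurable-selection theorem, and continuity and monotonicity of $x\mapsto\phi^*(x,\theta)$ and $x\mapsto c^*(x,\theta)=x-\phi^*(x,\theta)$ follow from a monotone-comparative-statics / implicit-function argument on the strictly concave objective (the cross-partial structure: as $x$ increases the marginal benefit of consumption shifts so that both $c^*$ and $\phi^*$ are nondecreasing, using concavity of $u$ and $\widehat V$).

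For (c), I would run the standard verification argument. On one hand, iterating the operator identity and using $J_T(x,\theta,\pi)=(L_{\pi_1}\circ\cdots\circ L_{\pi_T})\mathbf 0(x,\theta)$ together with monotonicity of the $L_{\pi_k}$ (Lemma \ref{L}) and $\mathbf 0\le V$ gives $J_T(x,\theta,\pi)\le (L_{\pi_1}\circ\cdots\circ L_{\pi_T})V(x,\theta)\le T^{T}V(x,\theta)=V(x,\theta)$ for every $\pi\in\Pi$, wait — more carefully, one shows $(L_{\pi_k}V)(h_k)\le (TV)(x_k,\theta_k)=V(x_k,\theta_k)$ pointwise for any admissible $\pi_k$, so composing and using monotonicity yields $J_T\le V$, hence $J(x,\theta,\pi)=\lim_T J_T(x,\theta,\pi)\le V(x,\theta)$; taking sup over $\pi$ gives $\sup_\pi J\le V$. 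On the other hand, for the stationary policy $\phi^*$ we have equality $(L_{\phi^*}V)(x,\theta)=V(x,\theta)$ by \eqref{oe_max}, so by induction $J_T(x,\theta,\phi^*)=(L_{\phi^*})^T\mathbf 0(x,\theta)$ and, since $\mathbf 0\le V$ and $(L_{\phi^*})^TV=V$, a sandwiching argument using $T^T\mathbf 0\uparrow$ and the contraction (so $T^T\mathbf 0\to V$ in $w$-norm, hence pointwise along a subsequence, and pointwise by monotonicity of Theorem \ref{thm4}(i)) gives $\lim_T J_T(x,\theta,\phi^*)=V(x,\theta)$, i.e. $J(x,\theta,\phi^*)=V(x,\theta)$. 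Combining, $V(x,\theta)=J(x,\theta,\phi^*)=\sup_{\pi\in\Pi}J(x,\theta,\pi)$, so $\phi^*$ is optimal in the class of stationary policies. The only subtlety in (c) is justifying the interchange of limit and the nonlinear $L$-operators, which is handled by monotone convergence inside the $\ln E[e^{-\gamma(\cdot)}\mid h_k]$ representation in \eqref{ece} together with the uniform $w$-bound of Theorem \ref{thm4}(ii).
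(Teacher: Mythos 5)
Your treatment of (a) and (b) follows essentially the paper's own route: the operator $L$ on $(\mathcal{B},\|\cdot\|_w)$ is shown to be a self-map and an $\alpha\beta$-contraction via the regime-switching association inequality (Lemma \ref{lem-covl}), Banach's fixed point theorem gives the unique $V$, and then strict concavity of $\widehat V$, uniqueness of the maximizer, the Maximum Theorem for continuity of $\phi^*$, and a comparative-statics/contradiction argument for monotonicity of $\phi^*$ and $c^*$; all of this is consistent with Lemmas \ref{lem-basic}--\ref{lem-oper}. (Your strict-concavity step for $V$ can be made precise more simply than you suggest: strict concavity of $u$ makes the concavity inequality for $Lv$ strict, as in \eqref{Lvx}, and $V=LV$.)

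The genuine gap is in part (c), in the direction $V\le J(x,\theta,\phi^*)$. Your ``sandwich'' produces only upper bounds: monotonicity with $\mathbf 0\le V$ and $L_{\phi^*}^{(T)}V=V$ gives $J_T(\cdot,\cdot,\phi^*)=(L_{\phi^*})^{(T)}\mathbf 0\le V$, and the convergence you invoke, $L^{(T)}\mathbf 0\to V$ in $w$-norm for the \emph{Bellman} operator, concerns the wrong operator --- $J_T(\cdot,\cdot,\phi^*)$ is the iterate of the \emph{policy} operator $L_{\phi^*}$, and the fact that $\phi^*$ attains the supremum in the single equation $LV=V$ does not make it a maximizer for the finite-horizon values $L^{(k)}\mathbf 0$, so $L^{(T)}\mathbf 0\to V$ yields no lower bound on $J_T(\cdot,\cdot,\phi^*)$. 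Because $\rho$ is a nonlinear certainty equivalent, one also cannot split $L_{\phi^*}^{(T)}V$ into $J_T(\cdot,\cdot,\phi^*)$ plus a discounted remainder as in the expected-utility case; this is exactly the point where the paper does real work: it iterates $V=L_{\phi^*}^{(T)}(V)$ and repeatedly applies the subadditivity direction of Lemma \ref{lem-covl} with $g_1=J_k(\cdot,\cdot,\phi^*)$ (non-decreasing by Lemma \ref{lem-incr}, which in turn uses the monotonicity of $\phi^*$ and $c^*$ established in part (b)) and $g_2=(\alpha\beta)^k\|V\|_w\bar w$, peeling off a geometric tail to get $V\le J_T(\cdot,\cdot,\phi^*)+(\alpha\beta)^T\|V\|_w\bar w$ as in \eqref{ii}, and then lets $T\to\infty$. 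An alternative repair of your argument would be to prove that $L_{\phi^*}$ itself is an $\alpha\beta$-contraction on $B_w$ (the contraction estimate in Lemma \ref{lem-oper} does not use the supremum over $y$), whose unique fixed point is $V$ by \eqref{oe_max}, whence $(L_{\phi^*})^{(T)}\mathbf 0\to V$; but as written your proposal establishes only $J(x,\theta,\phi^*)\le V$, and the reverse inequality is missing.
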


\noindent Throughout this section we assume that (U1)-(U2) and (F1)-(F2) are satisfied.
We start with a result that we shall use in many places. Before giving a proof for Theorem \ref{mainthm}, several results are presented in the following lemmas and their proofs are provided.

\begin{lem} \label{lem-basic}
Let $v\in {\cal B}$ and $\theta \in \mathcal{S}$.  Then, the function
$$y\mapsto \widehat{v}(y,\theta):= -\frac 1\gamma\ln \left[\sum_{\theta'\in\mathcal{S}} p_{\theta \theta'} \int_{[0,\infty)} e^{-\gamma v(f(\theta,y,z),\theta')}\nu(dz) \right] $$
is continuous, concave, non-decreasing and non-negative. Additionally if $v$ is strictly concave, $\widehat{v}$ is also strictly concave.
\end{lem}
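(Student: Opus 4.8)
The plan is to verify, for a fixed $\theta\in\mathcal S$, each asserted property of $\widehat v(\cdot,\theta)$ in turn, all of them flowing from (F1), from $v\in\mathcal B$, and from the sign convention $\gamma>0$; throughout, write $\Lambda(y):=\sum_{\theta'}p_{\theta\theta'}\int_{[0,\infty)}e^{-\gamma v(f(\theta,y,z),\theta')}\nu(dz)$ and regard $\sum_{\theta'}p_{\theta\theta'}\int(\cdot)\,\nu(dz)$ as integration against a probability measure on $\mathcal S\times[0,\infty)$. Non-negativity and finiteness are immediate: since $v\ge 0$ and $\gamma>0$, the integrand lies in $(0,1]$, so $\Lambda(y)\in(0,1]$ (strictly positive, being an average of integrals of strictly positive functions against a probability measure), whence $\widehat v(y,\theta)=-\tfrac1\gamma\ln\Lambda(y)\in[0,\infty)$. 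For monotonicity: for $0\le y\le y'$, (F1) gives $f(\theta,y,z)\le f(\theta,y',z)$, and then the non-decreasing property of $v(\cdot,\theta')$ gives $e^{-\gamma v(f(\theta,y,z),\theta')}\ge e^{-\gamma v(f(\theta,y',z),\theta')}$, so $\Lambda(y)\ge\Lambda(y')$, and applying the decreasing map $t\mapsto-\tfrac1\gamma\ln t$ yields $\widehat v(y,\theta)\le\widehat v(y',\theta)$. Continuity I would obtain from dominated convergence rather than from interior continuity of concave functions, so as to cover $y=0$ as well: if $y_n\to y_0$, then $f(\theta,y_n,z)\to f(\theta,y_0,z)$ by continuity of $f(\theta,\cdot,z)$ in (F1), hence $v(f(\theta,y_n,z),\theta')\to v(f(\theta,y_0,z),\theta')$ by continuity of $v(\cdot,\theta')$; the integrands are bounded by the integrable constant $1$ and $\mathcal S$ is finite, so $\Lambda(y_n)\to\Lambda(y_0)>0$ and $\widehat v(y_n,\theta)\to\widehat v(y_0,\theta)$.

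The main work is concavity, which I would handle as follows. Fix $y_1,y_2\ge 0$, $\lambda\in(0,1)$, and set $y_\lambda:=\lambda y_1+(1-\lambda)y_2$. Concavity of $f(\theta,\cdot,z)$ gives $f(\theta,y_\lambda,z)\ge\lambda f(\theta,y_1,z)+(1-\lambda)f(\theta,y_2,z)$, and then the monotonicity followed by the concavity of $v(\cdot,\theta')$ yields
$$ v(f(\theta,y_\lambda,z),\theta')\;\ge\;v\bigl(\lambda f(\theta,y_1,z)+(1-\lambda)f(\theta,y_2,z),\theta'\bigr)\;\ge\;\lambda v(f(\theta,y_1,z),\theta')+(1-\lambda)v(f(\theta,y_2,z),\theta'). $$
Exponentiating reverses the inequality (again using $\gamma>0$): $e^{-\gamma v(f(\theta,y_\lambda,z),\theta')}\le\bigl(e^{-\gamma v(f(\theta,y_1,z),\theta')}\bigr)^{\lambda}\bigl(e^{-\gamma v(f(\theta,y_2,z),\theta')}\bigr)^{1-\lambda}$. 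Integrating against the probability measure and applying Hölder's inequality with conjugate exponents $1/\lambda$ and $1/(1-\lambda)$ gives $\Lambda(y_\lambda)\le\Lambda(y_1)^{\lambda}\Lambda(y_2)^{1-\lambda}$, and taking $-\tfrac1\gamma\ln$ turns this into $\widehat v(y_\lambda,\theta)\ge\lambda\widehat v(y_1,\theta)+(1-\lambda)\widehat v(y_2,\theta)$. For strict concavity, assume $v(\cdot,\theta')$ is strictly concave for each $\theta'$ and $y_1\ne y_2$: then the second inequality in the displayed chain is strict at every $z$ with $f(\theta,y_1,z)\ne f(\theta,y_2,z)$, and as long as this occurs on a set of positive $\nu$-measure (for some $\theta'$ with $p_{\theta\theta'}>0$) the bound $\Lambda(y_\lambda)\le\Lambda(y_1)^{\lambda}\Lambda(y_2)^{1-\lambda}$ becomes strict, hence so does the concavity inequality for $\widehat v$.

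I expect the concavity step to be the principal obstacle: the point is to transport a pointwise inequality through the double averaging — sum over the regimes $\theta'$ and integral over the shock $z$ — which is exactly the Hölder/log-convexity estimate above, while keeping scrupulous track of the direction in which the inequality flips, both when passing through the exponential and through the outer $-\tfrac1\gamma\ln$. The strict-concavity refinement is the secondary delicate point, since the survival of strictness under integration hinges on the production function $f(\theta,\cdot,z)$ being non-degenerate (genuinely increasing for $\nu$-almost every $z$) on the interval $[y_1,y_2]$.
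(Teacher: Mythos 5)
Your proof is correct, and for the routine parts (non-negativity from $v\ge 0$ and $\gamma>0$, monotonicity by composing the three monotone maps, continuity by dominated convergence with the bound $0<e^{-\gamma v(\cdot)}\le 1$) it coincides with the paper's argument. Where you genuinely diverge is the concavity step: the paper transports the pointwise inequality $v(f(\theta,y,z),\theta')\ge\lambda v(f(\theta,y',z),\theta')+(1-\lambda)v(f(\theta,y'',z),\theta')$ through the double averaging by invoking parts (ii) and (iv) of Theorem \ref{P}, i.e.\ the monotonicity and the concavity of the entropic certainty-equivalent operator $\rho$, the latter being established in the Appendix via a second-order Fr\'echet-derivative computation and the Cauchy--Schwarz inequality; you instead exponentiate and apply H\"older's inequality with exponents $1/\lambda$ and $1/(1-\lambda)$ to get the log-convexity bound $\Lambda(y_\lambda)\le\Lambda(y_1)^{\lambda}\Lambda(y_2)^{1-\lambda}$ for your $\Lambda$, and then take $-\tfrac1\gamma\ln$. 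The two routes are mathematically equivalent (H\"older is exactly what makes $g\mapsto\ln E[e^{g}]$ convex), but yours is more elementary and self-contained at this point, while the paper's reuses operator-level machinery (Theorem \ref{P}(iv)) that it needs elsewhere anyway. On strict concavity you are in fact more careful than the paper: the paper merely asserts that the inequality leading to \eqref{oper_rho} becomes strict when $v$ is strictly concave, which — exactly as you flag — tacitly requires $f(\theta,y',z)\neq f(\theta,y'',z)$ on a set of positive $\nu$-measure (if $f$ were constant in $y$, which (F1) alone permits, then $\widehat v$ would be constant and not strictly concave); so your explicit non-degeneracy caveat exposes a point the paper's own proof glosses over rather than a defect in your argument.
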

\begin{proof}
Let us fix $v\in {\cal B}$ and $\theta \in \mathcal{S}$. By (F1), for every $y,z \in [0,\infty)$, we have $0\le f(\theta,y,z)$. Since, for each $\theta' \in \mathcal{S}$, $v(\cdot,\theta')$ is non-negative and non-decreasing,
$$0\le v(0,\theta')\le v(f(\theta,y,z),\theta') \textrm{ for any } y,z \in[0,\infty).$$
Consequently, since $(\mathbb{R}_+, {\cal B} (\mathbb{R}_+), \nu)$, where ${\cal B} (\mathbb{R}_+)$ is the Borel
$\sigma$-field on $\mathbb{R}_+$ is a probability space, \begin{align*}
    \int_{[0,\infty)} e^{-\gamma v(f(\theta,y,z),\theta')}\nu(dz) \le 1.
\end{align*}
This implies that $0\le \widehat{v}$ as $\widehat{v}$ involves composition of negative log function of an average of the above expression.\\

\noindent Furthermore, since by (F1),  for every  $z\in[0,\infty)$, the function $f(\theta,\cdot,z)$ is non-decreasing  and, by definition of $\mathcal{B}$, $v$ is non-decreasing, for every  $\theta' \in \mathcal{S}$, the composition $v(f(\theta,\cdot,z),\theta')$ is non-decreasing. Again as $y\mapsto e^{-y}$ is decreasing,  $y\mapsto \sum_{\theta'\in\mathcal{S}} p_{\theta \theta'} \int_{[0,\infty)} e^{-\gamma v(f(\theta,y,z), \theta')}\nu(dz)$ is an average of a family of decreasing functions. Finally as $y\mapsto -\ln y$ is also decreasing,  $\widehat{v}$, the composition of the two above decreasing functions is non-decreasing.\\

\noindent Since composition of continuous functions is continuous, by virtue of (F1), $e^{-\gamma v( f(\theta,\cdot,z),\theta')}$ is continuous for every $z\in[0,\infty)$ and $\theta, \theta' \in \mathcal{S}$. In addition to that $0< e^{-\gamma v( f(\theta,\cdot,\cdot),\cdot)}<1$. Hence, the dominated convergence theorem implies that $\widehat{v}$ is continuous.\\

\noindent In order to show the concavity of $\widehat{v}$ w.r.t. $y$, let $y=\lambda y'+ (1-\lambda) y'',$
where $\lambda\in(0,1)$. By (F1) for each $z\in[0,\infty)$ and $\theta\in\mathcal{S}$ $$f(\theta,y,z)\ge \lambda f(\theta,y',z)+ (1-\lambda) f(\theta,y'',z)$$
holds. Since, $v(\cdot, \theta)$ is non-decreasing and concave in its first argument for each $\theta \in {\cal S}$, we obtain, for each $\theta, \theta'\in\mathcal{S}$, and $z\in[0,\infty)$
$$v(f(\theta,y,z),\theta')\ge v(\lambda f(\theta,y',z)+ (1-\lambda) f(\theta,y'',z),\theta')\ge
\lambda v(f(\theta,y',z),\theta')+ (1-\lambda) v(f(\theta,y'',z),\theta').$$
Applying properties (ii) and (iv) in Theorem \ref{P} respectively gives
\begin{align}\label{oper_rho}
\widehat{v}(y,\theta)& \ge -\frac 1\gamma
  \ln \left[\sum_{\theta'\in\mathcal{S}} p_{\theta \theta'} \int_{[0,\infty)} e^{-\gamma \left(\lambda v(f(\theta,y',z),\theta')+ (1-\lambda) v(f(\theta,y'',z),\theta') \right) }\nu(dz) \right] \nonumber \\
  &
  \ge
  \lambda \widehat{v}(y',\theta)+ (1-\lambda) \widehat{v}(y'',\theta).
\end{align}
Hence $\widehat{v}$ is concave in its first argument for
each $\theta \in {\cal S}$. Again, if $v$ is strictly concave, the inequality above \eqref{oper_rho} is also strict, implying the strict concavity of $\widehat{v}$.
\end{proof}

\begin{lem} \label{lem-incr}
Assume that $\phi\in\Phi(\subset \Pi)$ is a non-decreasing function in its first argument for each $\theta \in {\cal S}$ such that $x\mapsto x-{\phi}(x,\theta)$ is also non-decreasing for each $\theta\in\mathcal{S}$.
Then, for any $T\in\mathbb{N}$
the function $x\mapsto J_T(x,\theta,\phi)$ is non-decreasing and continuous.
\end{lem}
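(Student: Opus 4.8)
I would prove both assertions at once by induction on $T$, using the stationary recursion \eqref{rem-J2},
\[
J_{T+1}(x,\theta,\phi)=u\big(x-\phi(x,\theta)\big)-\frac{\beta}{\gamma}\ln\Big[\sum_{\theta'\in\mathcal{S}}p_{\theta\theta'}\int_{[0,\infty)}e^{-\gamma J_T(f(\theta,\phi(x,\theta),z),\theta',\phi)}\,\nu(dz)\Big].
\]
First I would record the observation that carries the real content: although the hypotheses only assert that $x\mapsto\phi(x,\theta)$ and $x\mapsto c(x,\theta):=x-\phi(x,\theta)$ are non-decreasing, both are automatically \emph{continuous}. Indeed, being monotone each has one-sided limits everywhere, and since $\phi(\cdot,\theta)+c(\cdot,\theta)$ equals the identity map, which is continuous, the (nonnegative) jumps of $\phi(\cdot,\theta)$ and of $c(\cdot,\theta)$ at any point sum to zero, hence each vanishes. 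Also $c(x,\theta)\in[0,x]$ because $(x,\phi(x,\theta))\in D$, so $u(c(x,\theta))$ is well defined, and $u$ is continuous on $[0,\infty)$ (concavity gives continuity on $(0,\infty)$, and continuity at $0$ is part of (U1)). The base case is then immediate: $J_1(x,\theta,\phi)=u\big(c(x,\theta)\big)$ is the composition of the non-decreasing continuous map $c(\cdot,\theta)$ with the non-decreasing continuous $u$, hence non-decreasing and continuous in $x$.

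For the inductive step, assume $x\mapsto J_T(x,\theta,\phi)$ is non-decreasing and continuous for every $\theta$. I would first check that
\[
\psi_T(y,\theta):=-\frac{1}{\gamma}\ln\Big[\sum_{\theta'\in\mathcal{S}}p_{\theta\theta'}\int_{[0,\infty)}e^{-\gamma J_T(f(\theta,y,z),\theta',\phi)}\,\nu(dz)\Big]
\]
is non-decreasing and continuous in $y$ for each $\theta$. This is exactly the argument already used for $\widehat v$ in Lemma \ref{lem-basic}, which uses only non-negativity, monotonicity and continuity of its argument, not concavity: by (F1) the map $f(\theta,\cdot,z)$ is non-decreasing and continuous, so composing with $J_T(\cdot,\theta',\phi)$ and then with $t\mapsto e^{-\gamma t}$ produces, for each $z$ and $\theta'$, a non-increasing continuous integrand bounded by $1$ since $J_T\ge 0$ (Theorem \ref{thm4}(i)); dominated convergence and the finiteness of $\mathcal{S}$ then make the bracketed quantity a $(0,1]$-valued, non-increasing, continuous function of $y$, and composing with the decreasing continuous map $t\mapsto-\tfrac1\gamma\ln t$ gives the claim. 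Since $J_{T+1}(x,\theta,\phi)=u\big(c(x,\theta)\big)+\beta\,\psi_T\big(\phi(x,\theta),\theta\big)$ is a sum (with $\beta>0$) of two compositions of non-decreasing continuous maps, it is non-decreasing and continuous in $x$, closing the induction.

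The only step I expect to be non-routine is the preliminary observation that monotonicity of $\phi(\cdot,\theta)$ together with monotonicity of $c(\cdot,\theta)=(\cdot)-\phi(\cdot,\theta)$ forces both to be continuous; everything after that is a direct transcription of the reasoning behind Lemma \ref{lem-basic}, combined with the stability of monotonicity and continuity under composition and nonnegative linear combinations.
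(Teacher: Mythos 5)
Your proof is correct and follows essentially the same route as the paper: establish continuity of $\phi(\cdot,\theta)$ and $c(\cdot,\theta)$ from the two monotonicity hypotheses (the paper gets $1$-Lipschitz continuity directly from $0\le \phi(y,\theta)-\phi(x,\theta)\le y-x$, while you argue via cancelling jumps, which is equivalent), then induct on $T$ through the recursion \eqref{rem-J2}, reusing the argument of Lemma \ref{lem-basic} with $J_T$ in place of $v$. Your explicit remark that this step needs only non-negativity, monotonicity and continuity of $J_T$ — not concavity, which $J_T$ need not have for an arbitrary monotone $\phi$ — is a helpful clarification of what the paper leaves implicit in the phrase ``by changing the role of $v$ to $J_T$''.
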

\begin{proof} First we note that $\phi(\cdot, \theta)$ is Lipschitz continuous uniformly in $\theta \in {\cal S}$ with a common constant 1. Indeed, as $x\mapsto x-{\phi}(x,\theta)$ is non-decreasing, for $x \leq y$,
\begin{align*}
    x-{\phi}(x,\theta) \leq y - {\phi}(y,\theta), \quad\textrm{ i.e., }  {\phi}(y,\theta) - {\phi}(x,\theta) \leq y-x .
\end{align*}
Next, since $\phi$ is non-decreasing, for $x \leq y$,
\begin{align*}
    {\phi}(x,\theta) \leq {\phi}(y,\theta), \quad \textrm{ i.e., } 0 \leq {\phi}(y,\theta) - {\phi}(x,\theta).
\end{align*}
Thus $0 \leq {\phi}(y,\theta) - {\phi}(x,\theta)\le y-x$. Hence $\phi(\cdot, \theta)$ is  Lipschitz continuous with constant $1$ for all $\theta \in {\cal S}$. \\

\noindent Next we proceed by induction. For $T=1$ the assertion is true by (U1) since
\begin{align*}
    J_1(x,\theta,\phi) = u(x-{\phi}(x,\theta)),
\end{align*}
and $x-\phi(x,\theta)$ is non-decreasing in $x$.
Assume that it holds for some $T\in \mathbb{N}.$ From \eqref{rem-J2} recall that,
 \begin{eqnarray*}
J_{T+1}(x,\theta,\phi)&=& u(x-{\phi}(x,\theta))-\frac\beta\gamma\ln\left[ \sum_{\theta'\in\mathcal{S}} p_{\theta \theta'} \int_{[0,\infty)} e^{-\gamma  J_T(f(\theta,\phi(x,\theta),z), \theta',\phi)}\nu(dz) \right].
\end{eqnarray*}
Now as in Lemma  \ref{lem-basic}, by changing the role of $v$ to $J_T$, and using (U1), the conclusion follows.
\end{proof}

\begin{lem} \label{lem-covl}
For each $j\in \{1,2\}$, let $[0,\infty)\times\mathcal{S} \ni (x,\theta') \mapsto g_j(x,\theta') \in [0,\infty]$, be non-decreasing in $x$. Furthermore, assume that one of $g_1$ and $g_2$ is constant w.r.t. $\theta'$. Then
\begin{eqnarray*}
\lefteqn{
-\frac1\gamma\ln \left[ \sum_{\theta'\in\mathcal{S}} p_{\theta \theta'} \int_{[0,\infty)}e^{-\gamma\left( g_1(f(\theta,y,z),\theta')+ g_2(f(\theta,y,z),\theta')\right)}\nu(dz) \right]  }\\
&\le& -\frac1\gamma\ln \left[ \sum_{\theta'\in\mathcal{S}} p_{\theta \theta'} \int_{[0,\infty)}e^{-\gamma\left( g_1(f(\theta,y,z),\theta') \right)}\nu(dz) \right]
-\frac1\gamma\ln \left[ \sum_{\theta'\in\mathcal{S}} p_{\theta \theta'} \int_{[0,\infty)}e^{-\gamma\left( g_2(f(\theta,y,z),\theta')\right)}\nu(dz) \right] .
\end{eqnarray*}
\end{lem}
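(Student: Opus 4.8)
The plan is to interpret the inequality as a regime‑switching analogue of the Chebyshev/Harris association inequality: if two functions are both monotone in the same direction, their "log‑exponential average" (an entropic‑type functional) super‑additively splits. First I would fix $\theta\in\mathcal{S}$ and $y\in[0,\infty)$ and introduce, for a bounded measurable $h$ on $[0,\infty)\times\mathcal{S}$, the probability measure $Q$ on $[0,\infty)\times\mathcal{S}$ defined by $Q(dz,\theta')=p_{\theta\theta'}\,\nu(dz)$, so that the left‑hand side is $-\frac1\gamma\ln E_Q[e^{-\gamma(G_1+G_2)}]$ with $G_j(z,\theta'):=g_j(f(\theta,y,z),\theta')$. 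Since $f(\theta,\cdot,z)$ is non-decreasing by (F1) and each $g_j(\cdot,\theta')$ is non-decreasing in its first argument, the composed maps $z\mapsto G_j(z,\theta')$ are non-decreasing in $z$ for every fixed $\theta'$; moreover one of the $G_j$ does not depend on $\theta'$ at all. The claim is then exactly
$$-\tfrac1\gamma\ln E_Q\!\left[e^{-\gamma(G_1+G_2)}\right]\le -\tfrac1\gamma\ln E_Q\!\left[e^{-\gamma G_1}\right]-\tfrac1\gamma\ln E_Q\!\left[e^{-\gamma G_2}\right],$$
i.e. (multiplying by $-\gamma$ and exponentiating) $E_Q[e^{-\gamma(G_1+G_2)}]\ge E_Q[e^{-\gamma G_1}]\,E_Q[e^{-\gamma G_2}]$, a covariance‑nonnegativity statement $\mathrm{Cov}_Q(e^{-\gamma G_1},e^{-\gamma G_2})\ge 0$ for the (now decreasing, hence co‑monotone) random variables $e^{-\gamma G_j}$.

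The key step is to prove this covariance inequality in the presence of the discrete regime variable $\theta'$, which is where the hypothesis "one of $g_1,g_2$ is constant in $\theta'$" is used. Say $g_2$ (hence $G_2$) depends only on $z$. Condition on $\theta'$: by the classical Chebyshev association inequality applied under the probability measure $\nu(\cdot)$ on $[0,\infty)$ to the two $\nu$‑a.s. non-increasing functions $z\mapsto e^{-\gamma g_1(f(\theta,y,z),\theta')}$ and $z\mapsto e^{-\gamma g_2(f(\theta,y,z))}$, one gets
$$\int e^{-\gamma(G_1+G_2)}\,\nu(dz)\ \ge\ \Big(\int e^{-\gamma G_1}\,\nu(dz)\Big)\Big(\int e^{-\gamma G_2}\,\nu(dz)\Big)$$
for each fixed $\theta'$. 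Now average over $\theta'$ with weights $p_{\theta\theta'}$: the left side becomes the full $E_Q[e^{-\gamma(G_1+G_2)}]$, and the right side is $\sum_{\theta'}p_{\theta\theta'}\,A(\theta')\,B$ where $B=\int e^{-\gamma G_2}\,\nu(dz)$ is the $\theta'$‑free factor and $A(\theta')=\int e^{-\gamma G_1(\cdot,\theta')}\,\nu(dz)$. Pulling $B$ out of the sum gives $B\sum_{\theta'}p_{\theta\theta'}A(\theta')=E_Q[e^{-\gamma G_2}]\cdot E_Q[e^{-\gamma G_1}]$, exactly the desired lower bound. Finally take $-\frac1\gamma\ln$ of both sides (a decreasing transformation, which reverses the inequality into the stated form) and use $\ln(pq)=\ln p+\ln q$ to split the right‑hand logarithm.

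I should also dispose of the degenerate cases for cleanliness: if any of the integrals vanish (possible since $g_j$ may take the value $+\infty$), the corresponding $-\frac1\gamma\ln(\cdot)$ term is $+\infty$ and the inequality holds trivially; otherwise all quantities are strictly positive and finite (bounded above by $1$ when $g_j\ge 0$, which holds here), so taking logarithms is legitimate. I would also note where $[0,\infty]$‑valuedness is harmless: the inequality $\int e^{-\gamma(G_1+G_2)}\,d\nu\ge(\int e^{-\gamma G_1}\,d\nu)(\int e^{-\gamma G_2}\,d\nu)$ still holds since $e^{-\gamma\cdot}$ extends continuously with $e^{-\infty}=0$ and the integrands stay in $[0,1]$. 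The main obstacle is purely bookkeeping: verifying that the classical association inequality applies $\theta'$‑by‑$\theta'$ and that the $\theta'$‑independent factor can be factored out of the Markov average so that the product of marginals reassembles correctly — this is exactly why the "one of $g_1,g_2$ is constant in $\theta'$" assumption cannot be dropped, because co‑monotonicity of a function of $(z,\theta')$ with one purely of $z$ is what lets the two‑stage (average over $\nu$, then over $p_{\theta\cdot}$) argument close.
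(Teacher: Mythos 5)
Your overall strategy coincides with the paper's: apply Chebyshev's association inequality regime by regime (i.e., conditionally on $\theta'$), use the hypothesis that one of the two functions is constant in $\theta'$ to pull its factor out of the average over $p_{\theta\,\cdot}$, and then take $-\frac1\gamma\ln$. There is, however, one genuinely wrong justification at the key step. You claim that $z\mapsto G_j(z,\theta')=g_j(f(\theta,y,z),\theta')$ is non-decreasing in $z$ ``since $f(\theta,\cdot,z)$ is non-decreasing by (F1)''. Assumption (F1) gives monotonicity of $f$ in the investment variable $y$ only; in the shock variable $z$ the production function is merely Borel measurable, so $z\mapsto G_j(z,\theta')$ need not be monotone at all. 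Consequently the classical association inequality cannot be invoked for these maps viewed as functions of the integration variable $z$, which is how you state it.

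The step is nonetheless salvageable, and the repair is exactly the paper's route: both integrands are non-increasing functions of the single real-valued random variable $X:=f(\theta,y,\xi)$ with $\xi\sim\nu$, because $x\mapsto e^{-\gamma g_1(x,\theta')}$ and $x\mapsto e^{-\gamma g_2(x)}$ are non-increasing by the assumed monotonicity of $g_1,g_2$ in their first argument. Applying Proposition \ref{cov} to $X$ (equivalently, to the pushforward of $\nu$ under $f(\theta,y,\cdot)$) for each fixed $\theta'$ gives precisely the $\theta'$-wise inequality $\int e^{-\gamma(G_1+G_2)}\,d\nu\ \ge\ \bigl(\int e^{-\gamma G_1}\,d\nu\bigr)\bigl(\int e^{-\gamma G_2}\,d\nu\bigr)$ that you need; what matters is co-monotonicity through the common quantity $f(\theta,y,z)$, not monotonicity in $z$ itself. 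With this correction the remainder of your argument — averaging over $\theta'$, factoring the $\theta'$-independent term out of the sum so the product of marginals reassembles, taking $-\frac1\gamma\ln$, and your (welcome, extra) treatment of the degenerate case where an integral vanishes because $g_j$ may take the value $+\infty$ — is correct and matches the paper's proof.
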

\begin{proof} We fix $\theta \in \mathcal{S}$ and set $X :=f(\theta,y,\xi)$,  $h(X,\bar{\theta}):=e^{-\gamma g_1(X,\bar{\theta})}$  and $g(X,\bar{\theta}):=e^{-\gamma g_2(X,\bar{\theta})}$ where $\xi$ and $\bar{\theta}$ are random variables on $[0,\infty)$ and $\mathcal{S}$ with distributions $\nu$ and $p_{\theta \cdot}$ respectively. From their definitions, the aforementioned $h$ and $g$ functions are non-increasing in $X$. Hence, we can invoke Proposition \ref{cov} in the Appendix using the conditional expectation given $\bar{\theta}$. That gives
\begin{align*}
\sum_{\theta'\in\mathcal{S}} p_{\theta \theta'} \int_{[0,\infty)} e^{-\gamma\left( g_1(f(\theta,y,z),\theta')+ g_2(f(\theta,y,z),\theta')\right)} \nu(dz) &
= \sum_{\theta'\in\mathcal{S}} p_{\theta \theta'}E\left[h(X,\bar{\theta}) g(X,\bar{\theta}) |\bar \theta=\theta'\right] \\
&
\ge E \Big( E\left[h(X,\bar{\theta})|\bar \theta\right] E\left[g(X,\bar{\theta}) |\bar \theta\right]\Big).
\end{align*}
Moreover, $E[h(X,\bar{\theta})|\bar \theta]$ and $E[g(X,\bar{\theta})|\bar \theta]$ are independent as one of $h$ and $g$ is constant in $\bar \theta$. Consequently, by the tower property of conditional expectations, we get
$$  E \Big( E\left[h(X,\bar{\theta})|\bar \theta\right] E\left[g(X,\bar{\theta}) |\bar \theta\right]\Big) =   E \Big( E\left[h(X,\bar{\theta})|\bar \theta\right]\Big) E \Big( E\left[g(X,\bar{\theta}) |\bar \theta\right]\Big)=  E \big( h(X,\bar{\theta})\big) E \big( g(X,\bar{\theta})\big).
$$
Thus by summarising the above two inequalities
\begin{align*}
&\sum_{\theta'\in\mathcal{S}} p_{\theta \theta'} \int_{[0,\infty)} e^{-\gamma\left( g_1(f(\theta,y,z),\theta')+ g_2(f(\theta,y,z),\theta')\right)} \nu(dz) \\
& \geq  \left[ \sum_{\theta'\in\mathcal{S}} p_{\theta \theta'}  \int_{[0,\infty)} e^{-\gamma\left( g_1(f(\theta,y,z),\theta') \right)} \nu(dz)   \right] \left[ \sum_{\theta'\in\mathcal{S}} p_{\theta \theta'}   \int_{[0,\infty)} e^{-\gamma\left(  g_2(f(\theta,y,z),\theta')\right)} \nu(dz) \right].
\end{align*}
Hence, by taking $- \frac{1}{\gamma}\ln$ on both sides we get the desired inequality. \end{proof}

\noindent For any $v\in {\cal B},$ we  define the operator $L$  as follows
\begin{equation}
\label{oper_1} Lv(x,\theta):=\sup_{y\in [0,x]} \left(u(x-y) -\frac\beta\gamma
  \ln\left[\sum_{\theta'\in\mathcal{S}} p_{\theta \theta'} \int_{[0,\infty)}e^{-\gamma v(f(\theta,y,z),\theta')}\nu(dz) \right] \right)
  \end{equation}
for all $x\in [0,\infty)$ and $\theta \in \mathcal{S}$.

\begin{lem} \label{lem-oper}
The operator $L$ maps ${\cal B}$ into itself and  is a contraction.
\end{lem}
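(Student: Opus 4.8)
\emph{Proof strategy.} The statement has two halves — $L(\mathcal{B})\subseteq\mathcal{B}$ and the contraction property — and the plan is to treat them in that order. Throughout I would write $Lv(x,\theta)=\sup_{y\in[0,x]}\bigl(u(x-y)+\beta\,\widehat v(y,\theta)\bigr)$, where for $v\in\mathcal{B}$ the function $\widehat v$ is the one analysed in Lemma~\ref{lem-basic}, so $\widehat v(\cdot,\theta)$ is already known to be non-negative, non-decreasing, continuous and concave, and $v\mapsto\widehat v$ is monotone (Theorem~\ref{P}(ii)). Since $u$ is continuous and $[0,x]$ is compact, the supremum is attained at some $y^\ast=y^\ast(x,\theta)$, which I will use in the contraction step.

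\emph{Step 1: $L$ maps $\mathcal{B}$ into itself.} For fixed $v\in\mathcal{B}$ I would verify the four defining properties of $\mathcal{B}$ together with finiteness of $\|Lv\|_w$. Non-negativity of $Lv$ is immediate ($u\ge0$, $\widehat v\ge0$); monotonicity in $x$ is immediate because enlarging $x$ both enlarges the feasible set $[0,x]$ and, for fixed $y$, increases $u(x-y)$. Concavity of $Lv(\cdot,\theta)$ is the standard fact that the partial supremum of a jointly concave function over a convex constraint set is concave: $(x,y)\mapsto u(x-y)$ is concave (the concave $u$ composed with the affine map $(x,y)\mapsto x-y$), $(x,y)\mapsto\widehat v(y,\theta)$ is concave by Lemma~\ref{lem-basic}, and $D$ is convex. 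Finiteness of $\|Lv\|_w$ is exactly the bound \eqref{L_ogr}: $u(x-y)\le d\,w_\theta(x-y)\le d\,w_\theta(x)$ by (U2) and monotonicity of $w_\theta$, and $\widehat v(y,\theta)\le\alpha\,d_v\,w_\theta(x)$ for $y\in[0,x]$ by Theorem~\ref{P}(v), whence $Lv(x,\theta)\le(d+\alpha\beta d_v)w_\theta(x)$. Finally, for continuity of $Lv(\cdot,\theta)$: being finite and concave it is continuous on $(0,\infty)$, and at the origin I would sandwich $\beta\widehat v(x,\theta)\le Lv(x,\theta)\le u(x)+\beta\widehat v(x,\theta)$ (lower bound from $y=x$; upper bound from $u(x-y)\le u(x)$ and $\widehat v(y,\theta)\le\widehat v(x,\theta)$ when $y\le x$) and let $x\downarrow0$, using $u(0)=0$ and continuity of $\widehat v$; this also gives Borel measurability since $\mathcal{S}$ is finite. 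Hence $Lv\in\mathcal{B}$.

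\emph{Step 2: $L$ is a contraction.} Fix $v,v'\in\mathcal{B}$, set $\delta:=\|v-v'\|_w$ (the case $\delta=0$ being trivial), fix $x,\theta$, and let $y^\ast$ attain the supremum in $Lv(x,\theta)$. Comparing with the feasible choice $y^\ast$ in $Lv'(x,\theta)$,
\begin{equation*}
Lv(x,\theta)-Lv'(x,\theta)\ \le\ \beta\bigl(\widehat v(y^\ast,\theta)-\widehat{v'}(y^\ast,\theta)\bigr),
\end{equation*}
so everything reduces to the claim $\widehat v(y,\theta)-\widehat{v'}(y,\theta)\le\alpha\,\delta\,w_\theta(x)$ for every $y\in[0,x]$. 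This is the crux and the place where the regime-switching association inequality is needed. Starting from $v\le v'+\delta w$ pointwise and using, for each next state $\theta'$, that $s\mapsto e^{-\gamma v'(s,\theta')}$ and $s\mapsto e^{-\gamma\delta w(s,\theta')}$ are both non-increasing in the income variable, Proposition~\ref{cov} (the ingredient underlying Lemma~\ref{lem-covl}) yields
\begin{equation*}
\int_{[0,\infty)}e^{-\gamma v(f(\theta,y,z),\theta')}\nu(dz)\ \ge\ \Bigl(\int e^{-\gamma v'(f(\theta,y,z),\theta')}\nu(dz)\Bigr)\Bigl(\int e^{-\gamma\delta w(f(\theta,y,z),\theta')}\nu(dz)\Bigr);
\end{equation*}
averaging over $\theta'$ with the weights $p_{\theta\theta'}$ and bounding the second factor from below by its minimum over $\theta'$ extracts $e^{-\gamma\widehat v(y,\theta)}\ge\bigl(\min_{\theta'}\int e^{-\gamma\delta w(f(\theta,y,z),\theta')}\nu(dz)\bigr)e^{-\gamma\widehat{v'}(y,\theta)}$, i.e.
\begin{equation*}
\widehat v(y,\theta)-\widehat{v'}(y,\theta)\ \le\ \sup_{\theta'\in\mathcal{S}}\Bigl(-\tfrac1\gamma\ln\!\int e^{-\gamma\delta w(f(\theta,y,z),\theta')}\nu(dz)\Bigr)\ \le\ \delta\sup_{\theta'\in\mathcal{S}}\int w(f(\theta,y,z),\theta')\,\nu(dz)\ \le\ \alpha\,\delta\,w_\theta(x),
\end{equation*}
using Jensen's inequality for $e^{-\gamma(\cdot)}$ and then (F2) (valid since $y\le x$). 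Feeding this back, $Lv(x,\theta)-Lv'(x,\theta)\le\alpha\beta\,\delta\,w_\theta(x)$; interchanging $v$ and $v'$ gives $|Lv-Lv'|\le\alpha\beta\,\|v-v'\|_w\,w_\theta$, i.e. $\|Lv-Lv'\|_w\le\alpha\beta\|v-v'\|_w$. Since $\alpha\in(0,1/\beta)$ we have $\alpha\beta<1$, so $L$ is a contraction.

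\emph{Where the difficulty lies.} Step~1 is routine, the only mildly delicate point being continuity of $Lv(\cdot,\theta)$ at the origin. The real work is the estimate $\widehat v-\widehat{v'}\le\alpha\delta w_\theta$ in Step~2. The obstacle specific to the regime-switching model is that $\widehat v$ is $-\tfrac1\gamma\ln$ of a \emph{double} average — over the shock $\xi$ and over the next regime $\theta'$ — so the one-variable Chebyshev argument of \cite{bjjet} does not apply directly; one has to exploit that, conditionally on the next regime, $e^{-\gamma v'}$ and the error factor $e^{-\gamma\delta w}$ are co-monotone in income, hence have non-negative conditional covariance — precisely the content of Lemma~\ref{lem-covl} — and only then collapse the regime average and hand the outcome to Jensen's inequality and the growth bound (F2).
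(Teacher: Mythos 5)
Your proof is correct and delivers the same modulus $\alpha\beta$, but it deviates from the paper's proof at three points, each in a legitimate way. For continuity of $Lv(\cdot,\theta)$ you use finiteness plus concavity on $(0,\infty)$ and a sandwich $\beta\widehat v(x,\theta)\le Lv(x,\theta)\le u(x)+\beta\widehat v(x,\theta)$ at the origin, whereas the paper invokes the Maximum Theorem after checking upper and lower hemicontinuity of the correspondence $\Gamma(x)=[0,x]$; your route is shorter, but the paper's detour also yields upper hemicontinuity of the maximizer map, which is reused in the proof of Theorem \ref{mainthm}(b) to get continuity of $\phi^*$ (not needed for this lemma). For concavity you use the partial-supremum-of-a-jointly-concave-function argument, which gives plain concavity — enough for membership in $\mathcal{B}$ — while the paper's two-maximizer computation \eqref{Lvx} produces the \emph{strict} inequality that its proof of Theorem \ref{mainthm}(a) later cites to conclude strict concavity of $V$; if one adopts your argument, that strictness has to be recovered separately (your setup does so easily via strict concavity of $u$, exactly as in \eqref{oper_u}–\eqref{Lvx}). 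Finally, for the contraction crux the paper replaces the error by $\|v_1-v_2\|_w\sup_{\theta'\in\mathcal{S}}w(f(\theta,y,z),\theta')$, which is constant in the next regime, and then applies the regime-switching association inequality of Lemma \ref{lem-covl} to the joint average over $(\xi,\theta')$; you instead apply the scalar Chebyshev bound (Proposition \ref{cov}) conditionally on each fixed $\theta'$ and then extract $\min_{\theta'}\int e^{-\gamma\delta w(f(\theta,y,z),\theta')}\nu(dz)$ from the regime average, before finishing with Jensen and (F2) exactly as the paper does. Your variant bypasses the constancy-in-$\theta'$ hypothesis of Lemma \ref{lem-covl} by conditioning, at the mild cost of using finiteness of $\mathcal{S}$ for the minimum; both give $\widehat v-\widehat{v'}\le\alpha\|v-v'\|_w\,w_\theta(x)$ on $[0,x]$ and hence $\|Lv-Lv'\|_w\le\alpha\beta\|v-v'\|_w<1$.
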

\begin{proof}
We first show that $Lv \in \mathcal{B}$ for each  $v\in {\cal B}$. Indeed,   by (U2), we obtain
\begin{align}\label{eqn-LvEstimate}
    \nonumber \|Lv\|_w & = \sup_{x \in [0,\infty), \theta \in \mathcal{S}}  \frac{|Lv(x,\theta)|}{w(x,\theta)} 
    \\
   \nonumber  & \leq \sup_{x \in [0,\infty), \theta \in \mathcal{S}}\left[ \frac{d \sup_{y \in [0,x]} |w((x-y),\theta)| }{w(x,\theta)}  \right] + \sup_{x \in [0,\infty), \theta \in \mathcal{S}}\left[ \frac{\beta \sup_{y \in [0,x]} |\widehat{v}(y,\theta) |}{w(x,\theta)}  \right] \\
    & \leq d\sup_{x \in [0,\infty), \theta \in \mathcal{S}}\left[ \frac{  |w(x,\theta)| }{w(x,\theta)}  \right] + \beta \sup_{x \in [0,\infty), \theta \in \mathcal{S}}\left[ \frac{ \sup_{y \in [0,x]} |\widehat{v}(y,\theta) |}{w(x,\theta)}  \right].
\end{align}
By following the similar steps as in the proof of property (iii) in Theorem \ref{P} we arrive at the following
\begin{align}\label{eqn-LvEstimate1}
\nonumber   | \widehat{v}(y,\theta) | & = -\frac 1\gamma\ln \mathbb{E}\left[ e^{-\gamma v(f(\theta_1,y_1,\xi_1),\theta_2)}\mid y_1=y, \theta_1=\theta] \right] \\
\nonumber   & \le -\frac 1\gamma \mathbb{E}\left[ {-\gamma v(f(\theta_1,y_1,\xi_1),\theta_2)}\mid y_1=y, \theta_1=\theta \right]  \\
\nonumber   & =  \mathbb{E}\left[ {v(f(\theta_1,y_1,\xi_1),\theta_2)}\mid y_1=y, \theta_1=\theta \right] \\
   & = \sum_{\theta'\in\mathcal{S}} p_{\theta \theta'} \int_{[0,\infty)}  v(f(\theta,y,z),\theta') \nu(dz).
\end{align}
Therefore using the above inequality \eqref{eqn-LvEstimate1}
\begin{align*}
     \sup_{x \in [0,\infty), \theta \in \mathcal{S}}\left[ \frac{ \sup_{y \in [0,x]} |\widehat{v}(y,\theta) |}{w(x,\theta)}  \right] & \leq \!\! \sup_{x \in [0,\infty), \theta \in \mathcal{S}} \frac{1}{w(x,\theta)} \sup_{y \in [0,x]} \sum_{\theta'\in\mathcal{S}} p_{\theta \theta'}\! \int_{[0,\infty)}\!\!\!\!\!\! \frac{v(f(\theta,y,z),\theta') w(f(\theta,y,z),\theta') }{w(f(\theta,y,z),\theta')} \nu(dz) \\
    & \leq  \|v\|_w \sup_{x \in [0,\infty), \theta \in \mathcal{S}} \frac{1}{w(x,\theta)}  \sup_{y \in [0,x]}\sum_{\theta'\in\mathcal{S}} p_{\theta \theta'} \int_{[0,\infty)}  w(f(\theta,y,z),\theta')\nu(dz) \\
    & \leq \alpha  \|v\|_w.
\end{align*}
Here in the last step we have used (F2).
Hence by substituting the above into  \eqref{eqn-LvEstimate} and using the definition of $\| \cdot\|_w$-norm we obtain
\begin{align*}
    \|Lv\|_w & \leq d+\alpha\beta\|v\|_w.
\end{align*}

\noindent Moreover,  by (U1) and Lemma \ref{lem-basic}, for each $\theta \in
\mathcal{S}$, we have that  $Lv(\cdot,\theta)$ is non-negative.\\

\noindent By (U1) and Lemma \ref{lem-basic}, the function $(x,y) \mapsto u(x-y)+\beta\widehat{v}(y,\theta)$ is continuous. Moreover, the constrained correspondence set $\Gamma: [0,\infty) \to 2^{[0,\infty)}$ given by $\Gamma(x) = [0,x]$ is compact valued, where $2^{[0,\infty)}$ is the power set of $[0,\infty)$. Therefore, $Lv(\cdot,\theta)$ is continuous as a consequence of the Maximum Theorem 
\cite[pg. 235]{Rang} provided $\Gamma$ is hemicontinuous. To show that the correspondence $\Gamma$ is hemicontinuous, we need to show that it is both upper and lower hemicontinuous.
For showing the upper hemicontinuity let $V \subset [0,\infty)$ be an open neighbourhood of $\Gamma(x') = [0,x'].$ Then there exists $\varepsilon >0$ such that
$$ [0,x'] \subset (-\varepsilon, x'+\varepsilon) \subset V.$$
Select the neighborhood $U:= (x'-\varepsilon, x'+\varepsilon)$ of $x'$. Then for all $x \in U$, $x < x' +\varepsilon$ and consequently
$$ \Gamma(x) = [0,x] \subset (-\varepsilon, x'+\varepsilon) \subset V.$$
Thus the correspondence $\Gamma$ is upper hemicontinuous. To prove the lower hemicontinuity let $x' \in [0,\infty)$ and $V \subset [0,\infty)$ be an open set such that $[0,x'] \cap V \neq \emptyset.$ This implies that there exists $0 < x'' < x'$ such that $x'' \in [0,x'] \cap V$. By selecting $U:= (x'',x'+1)$, observe that $x' \in U$, and also for all $x \in U$, $x'' < x$. Therefore,
$$x''\in [0,x] = \Gamma(x) \quad \forall x\in U.$$
As $x'' \in V$, $\Gamma(x) \cap V \neq \emptyset$ for all $x \in U$.
Hence $\Gamma$ is lower hemicontinuous. Consequently, $\Gamma$ is hemicontinuous. Next,  since $u$ is increasing by (U1), we observe that for $x'<x'',$
we have
\begin{align*}
    Lv(x',\theta) &=  \sup_{y\in[0,x']}\left(u(x'-y)+\beta\widehat{v}(y,\theta) \right) \le \sup_{y\in[0,x']}\left(u(x''-y)+\beta\widehat{v}(y,\theta) \right) \\
    & \le \sup_{y\in[0,x'']}\left(u(x''-y)+\beta\widehat{v}(y,\theta)\right) =  Lv(x'',\theta),
\end{align*}
for each $\theta \in {\cal S}$. So $Lv(\cdot,\theta)$ is non-decreasing. 
We now show the concavity of $Lv(\cdot,\theta)$. 
From \eqref{oper_1} we get
\begin{equation} \label{oper_L}
Lv(x,\theta) \ge   u(x-y) -\frac\beta\gamma
  \ln\left[\sum_{\theta'\in\mathcal{S}} p_{\theta \theta'} \int_{[0,\infty)}e^{-\gamma v(f(\theta,y,z),\theta')}\nu(dz) \right],
\end{equation}
for any $y\in [0,x]$. Let us fix $x:=\lambda x'+(1-\lambda) x''$, where $\lambda\in(0,1),$ and $x',x''\in[0,\infty)$. Also let $y'\in [0,x']$ and  $y''\in [0,x'']$ denote
the maximizer in \eqref{oper_1}, which  exists due to the continuity of $y \mapsto u(x-y)+\beta\widehat{v}(y,\theta)$. Again, $\lambda y'+(1-\lambda) y'' (=y$ say) is in $[0,x]$.
Hence, by (U1) we obtain
\begin{equation} \label{oper_u}
u(x-y)=u(\lambda (x'-y')+(1-\lambda)(x''-y''))> \lambda u(x'-y')+(1-\lambda)u(x''-y'').
\end{equation}
Now combining (\ref{oper_L}) with (\ref{oper_u}) and (\ref{oper_rho}) and utilizing the fact that $y',y''$ are maximizers, in $Lv(x',\theta)$ and $Lv(x'',\theta)$ respectively, we finally obtain
\begin{align}\label{Lvx}
\nonumber Lv(x)& > \lambda u(x'-y')+(1-\lambda) u(x''-y'') -\frac\beta\gamma
  \ln\left[\sum_{\theta'\in\mathcal{S}} p_{\theta \theta'} \int_{[0,\infty)}e^{-\gamma v(f(\theta,y,z),\theta')}\nu(dz) \right] \\
\nonumber  & \geq \lambda u(x'-y')+(1-\lambda)u(x''-y'') + \beta \lambda \widehat{v}(y',\theta) +  \beta (1-\lambda) \widehat{v}(y'',\theta) \\
  & = \lambda Lv(x',\theta)+(1-\lambda) Lv(x'',\theta).
\end{align}
Hence we have proved that $L$ maps $\cal{B}$ into itself. Now we prove that $L$ is a contraction on $\cal{B}$. Assume that $v_1,v_2\in {\cal B}.$ Then, for every $x \in [0,\infty)$ and $\theta \in \mathcal{S}$,
\begin{align*}
& Lv_1(x,\theta)-Lv_2(x,\theta) \\
& \le \sup_{y\in [0,x]} \left( -\frac\beta\gamma
  \ln\left[\sum_{\theta'\in\mathcal{S}} p_{\theta \theta'} \int_{[0,\infty)}e^{-\gamma v_1(f(\theta,y,z),\theta')}\nu(dz) \right] + \frac\beta\gamma
  \ln\left[\sum_{\theta'\in\mathcal{S}} p_{\theta \theta'} \int_{[0,\infty)}e^{-\gamma v_2(f(\theta,y,z),\theta')}\nu(dz) \right]  \right)\\
& \le \beta \sup_{y\in [0,x]} \left( -\frac 1 \gamma
  \ln\left[\sum_{\theta'\in\mathcal{S}} p_{\theta \theta'} \int_{[0,\infty)}e^{-\gamma \|v_1-v_2\|_w \sup_{\theta'\in\mathcal{S}} \big(w(f(\theta,y,z),\theta')\big) -\gamma v_2(f(\theta,y,z),\theta') }\nu(dz) \right]\right.\\
&\qquad\qquad\qquad + \left.\frac 1 \gamma
  \ln\left[\sum_{\theta'\in\mathcal{S}} p_{\theta \theta'} \int_{[0,\infty)}e^{-\gamma v_2(f(\theta,y,z),\theta')}\nu(dz) \right]  \right)
\end{align*}
which follows from property (ii) in Theorem \ref{P}. Using the fact that $w$ and $v_2$ are non-decreasing in first variable and $\sup_{\theta'\in\mathcal{S}} \big(w(f(\theta,y,z),\theta')\big)$ is constant in $\theta'$  we apply  Lemma \ref{lem-covl} to assert that the above is less than or equal to
\begin{align*}
& \beta \sup_{y\in [0,x]} \left( -\frac 1 \gamma
  \ln\left[\sum_{\theta'\in\mathcal{S}} p_{\theta \theta'} \int_{[0,\infty)}e^{-\gamma \|v_1-v_2\|_w \sup_{\theta'\in\mathcal{S}} \big(w(f(\theta,y,z),\theta')\big)  }\nu(dz) \right]  \right. \\
& \qquad\qquad\qquad \left.-\frac 1 \gamma
  \ln\left[\sum_{\theta'\in\mathcal{S}} p_{\theta \theta'} \int_{[0,\infty)}e^{ -\gamma v_2(f(\theta,y,z),\theta') }\nu(dz) \right]  + \frac 1 \gamma
  \ln\left[\sum_{\theta'\in\mathcal{S}} p_{\theta \theta'} \int_{[0,\infty)}e^{-\gamma v_2(f(\theta,y,z),\theta')}\nu(dz) \right]  \right)\\
&= \beta \sup_{y\in [0,x]} \left( -\frac 1 \gamma
  \ln\left[\sum_{\theta'\in\mathcal{S}} p_{\theta \theta'} \int_{[0,\infty)}e^{-\gamma \|v_1-v_2\|_w \sup_{\theta'\in\mathcal{S}} \big(w(f(\theta,y,z),\theta')\big)  }\nu(dz) \right]   \right) \\
&\le   \beta \sup_{y\in [0,x]} \left(
   \sum_{\theta'\in\mathcal{S}} p_{\theta \theta'} \int_{[0,\infty)}  \|v_1-v_2\|_w \sup_{\theta'\in\mathcal{S}} \big(w(f(\theta,y,z),\theta')\big)  \nu(dz)    \right)
\end{align*}
by (iii) in Theorem \ref{P}. By (F2) above is not greater than
$\alpha \beta \|v_1-v_2\|_w w(x,\theta)$.
By changing the roles of $v_1$ with $v_2$ we finally obtain
$$\|Lv_1-Lv_2\|_w\le \alpha\beta\|v_1-v_2\|_w.$$
Again, from (F2), $\alpha \in (0, 1/\beta)$, and so $\alpha\beta < 1$. Consequently, $L$ is a contraction. This completes the proof.
\end{proof}

\begin{proof}[Proof of Theorem \ref{mainthm}]
{\bf Part (a).} Clearly, a solution of (\ref{oe}), if exists, is a fixed point of $L$ (as in \eqref{oper_1}) and vice versa. The latter exists uniquely from Lemma \ref{lem-oper} and the Banach fixed point theorem
applied to the operator $L$.
Hence, \eqref{oe} has a unique solution $V\in {\cal B}$. In addition, note that \eqref{Lvx} has strict inequality making $LV$ strictly concave. Since $V$ and $LV$ are identical, $V$ is strictly concave too.\\

\noindent {\bf Part (b).} Due to the strict concavity of $V$ (Part (a)), $\widehat{V}$, which is given by \eqref{b} is also strictly concave, continuous and non-decreasing by invoking an application of Lemma \ref{lem-basic}. Due to the strictly concavity, for each $x\in[0,\infty)$ and $\theta \in \mathcal{S}$,
the set of maximizers on the right-hand side  of (\ref{oe}) is a singleton. Hence, the multifunction becomes a unique function $\phi^*\in\Phi$ (say) that maximizes the right side of  \eqref{oe}. Consequently \eqref{oe_max} is true.\\

\noindent We now show $\phi^*$ to be continuous. To this end we first recall the applicability of Maximum Theorem which has been asserted in the proof of Lemma \ref{lem-oper}. As a further consequence of the Maximum Theorem the map $x\mapsto y^*(x)$, which gives the set of maximizers of $y \mapsto u(x-y)+\beta\widehat{v}(y,\theta)$, is compact valued and upper hemicontinuous.
As a singleton valued upper hemicontinuous function is continuous, ${\phi}^*\in\Phi$ is also a continuous function. Therefore, the optimal consumption  strategy $c^*(x,\theta)=x-{\phi}^*(x,\theta)$ is also continuous in $x$.\\

\noindent We fix two points $x'< x''\in [0,\infty).$ Suppose there exists a $\theta  \in \mathcal{S}$ such that ${\phi}^*(x',\theta)>{\phi}^*(x'',\theta)$. Then, using $u(x'')-u(x''-\delta)< u(x')-u(x'-\delta)$, which is true for every $0<\delta<x'$ due to the strict concavity of $u$, we get
\begin{align*}
u(x''-{\phi}^*(x'',\theta))-u(x''-{\phi}^*(x',\theta))
< u(x'-{\phi}^*(x'',\theta))-u(x'-{\phi}^*(x',\theta)).
\end{align*}
By adding $\beta \widehat{V}({\phi}^*(x'',\theta),\theta) -\beta\widehat{V}({\phi}^*(x',\theta),\theta)$ on both sides and rearranging
\begin{align*}
&\Big(u(x''-{\phi}^*(x'',\theta)) +\beta \widehat{V}({\phi}^*(x'',\theta),\theta)\Big)- \Big(u(x''-{\phi}^*(x',\theta)) +\beta \widehat{V}({\phi}^*(x',\theta),\theta)\Big)\\
&< \Big( u(x'-{\phi}^*(x'',\theta))+\beta \widehat{V}({\phi}^*(x'',\theta),\theta)\Big) -\Big(u(x'-{\phi}^*(x',\theta)) +\beta\widehat{V}({\phi}^*(x',\theta),\theta)\Big).
\end{align*}
Using \eqref{oe} and \eqref{oe_max}, clearly the left side is greater or equal to $ LV(x'',\theta)-LV(x'',\theta) =0$ and the right side is less than or equal to $ LV(x',\theta) - LV(x',\theta) =0$. This leads to a contradiction. Hence we have proved by contradiction that  ${\phi}^*(x',\theta)\le {\phi}^*(x'',\theta)$ for all $x'< x''\in [0,\infty)$ and every $\theta  \in \mathcal{S}$.\\

\noindent Furthermore, observe that (\ref{oe}) can be re-written as follows
$$V(x,\theta)=\sup_{a\in [0,x]}\left(u(a)-
\beta\widehat{V}(x-a,\theta)\right),\quad x\in[0,\infty).$$
Recall that $c^* (x, \theta) := x - \phi^* (x, \theta)$. Using (U1), the strict concavity of $\widehat{V}$ and arguments similar to the above, one gets that the optimal consumption  strategy $c^* (x, \theta)\in\Phi$ is also non-decreasing in $x$ for all $\theta \in {\cal S}$.\\

\noindent {\bf Part (c).} From (\ref{oe}) it follows that
 \begin{align}\label{ineqV}
     V(x,\theta)\ge u(x-y)-
  \frac\beta\gamma\ln\left[\sum_{\theta'\in\mathcal{S}} p_{\theta \theta'} \int_{[0,\infty)} e^{-\gamma V(f(\theta,y,z),\theta')}\nu(dz) \right],
 \end{align}
for all $ y\in[0,x]$, $x\in[0,\infty)$ and $\theta \in \mathcal{S}$. Let us set, for each $k \in \mathbb{N}$, $\bar{V}_{k}: B_w(H_{k})\to \mathbb{R}$ given by $$\bar{V}_{k}(h_{k}):=  V(x_{k},\theta_{k}).$$
Note that $\bar{V}_1 = V$. In view of this, by letting $\pi=(\pi_k)_{k\in\mathbb{N}}\in\Pi$ be any investment policy, for any history $h_k\in H_k,$ $k\in\mathbb{N},$
\eqref{ineqV} implies that
\begin{align}\label{L_iter}
    (L_{\pi_k}\bar{V}_{k+1})(h_k)&  = u(x_k - \pi_k(h_k)) -\frac \beta \gamma \ln \left[\sum_{\theta'\in\mathcal{S}} p_{\theta_k \theta'} \int_{[0,\infty)} e^{-\gamma \bar{V}_{k+1}(h_k,\pi_k(h_k), f(\theta_k,\pi_k(h_k),z),\theta')}\nu(dz) \right] \nonumber\\
    & = u(x_k - \pi_k(h_k)) -\frac \beta \gamma \ln \left[\sum_{\theta'\in\mathcal{S}} p_{\theta_k \theta'} \int_{[0,\infty)} e^{-\gamma V(f(\theta_k,\pi_k(h_k),z),\theta')}\nu(dz) \right] \nonumber\\
    & \leq V(x_k,\theta_k) = \bar{V}_k(h_k).
\end{align}
Consequently, since $L_{\pi_k}$ is monotone we get,
\begin{equation}\label{L_iter1}
V(x,\theta) =\bar{V}_{1}(h_1)\ge (L_{\pi_{1}}\bar{V}_{2})(h_{1}) \ge  [L_{\pi_{1}} ( L_{\pi_2}\bar{V}_{3})] (h_{1}) \ge\cdots \ge [(L_{\pi_{1}} \circ\cdots \circ L_{\pi_T})\bar{V}_{T+1}] (h_{1}),
\end{equation}
for $h_1 = (x,\theta)$.
Since $V\ge 0$ and  $L_{\pi_k}$ is monotone for every $\pi_k,$ $k\in\mathbb{N},$ we obtain
$$
V(x,\theta)\ge [(L_{\pi_1}\circ \cdots \circ L_{\pi_T} )\bar{V}_{T+1}](x,\theta)\ge  [(L_{\pi_1}\circ \cdots \circ L_{\pi_T} ){\bf 0}](x,\theta) =J_{T}(x,\theta,\pi),
$$
for any $\pi\in\Pi$, $x\in[0,\infty)$ and $\theta \in \mathcal{S}$. Here the last equality follows from \eqref{J_T}. Since the above is true for every $T>0$, by \eqref{J}, we get $V(x,\theta)\ge \lim_{T\to \infty} J_{T}(x,\theta,\pi) =J(x,\theta,\pi)$ for any $\pi\in\Pi$, $x\in[0,\infty)$ and $\theta \in \mathcal{S}$.
Hence,
\begin{equation}
\label{it}
V(x,\theta)\ge \sup_{\pi\in\Pi}J(x,\theta,\pi),\quad x\in [0,\infty), \theta \in \mathcal{S}.
\end{equation}

\noindent We next aim to establish the reverse inequality. Let ${\phi}^*: [0,\infty) \times \mathcal{S} \to [0,\infty)$ be as in (\ref{oe_max}). We set ${i}^* = \{{i}^*_k\}_{k \in \mathbb{N}}$ such that $i_k^*(h_k) = {\phi}^* (x_k,\theta_k)$ for all $k \in \mathbb{N}$.
For convenience of notation we set
$u_{{\phi}^*}(x,\theta):= u(x - {\phi}^*(x,\theta))$ and for any non-negative function $\varphi \in B_w$, we define:
\begin{align*}
\rho_{{\phi}^*} (\varphi)(x,\theta):=& - \frac1\gamma\ln\sum_{\theta'\in\mathcal{S}} p_{\theta \theta'} \int_{[0,\infty)}e^{-\gamma \varphi(f(\theta,\phi^*(x,\theta),z),\theta')}\nu(dz),\\
L_{{\phi}^*}(\varphi)(x,\theta):=& u_{{\phi}^*}(x,\theta) + \beta \rho_{{\phi}^*}(\varphi)(x,\theta).
\end{align*}
Then
 the right-hand side of \eqref{oe_max} can be re-written as
$$ u_{{\phi}^*}(x,\theta)+\beta \rho_{{\phi}^*} (V)(x,\theta) =L_{{\phi}^*}(V)(x,\theta),$$
for $(x,\theta) \in [0,\infty) \times \mathcal{S}$. Consequently, the equality \eqref{oe_max} can be rewritten as follows:
\begin{equation}\label{eqn-V2}
    V(x,\theta) = L_{{\phi}^*}(V)(x,\theta).
\end{equation}

\noindent Hence, by iterating the last equality $T-1$ times we get that
\begin{equation}\label{L_iter_i}
 V(x,\theta)= L_{{\phi}^*}^{(T)}(V)(x,\theta), \qquad x\in [0,\infty), \theta \in \mathcal{S},
 \end{equation}
where $L_{{\phi}^*}^{(T)}$ denotes the $T$-th composition of the operator $L_{{\phi}^*}$ with itself. Thus, (\ref{L_iter_i}) with property (ii) in Theorem \ref{P} followed by \eqref{eqn-LvEstimate1} together with the Assumption (F2) yield that
\begin{eqnarray}
\label{it2}
V(x,\theta)&=&  L_{{\phi}^*}^{(T-1)} (u_{{\phi}^*}
+\beta\rho_{{\phi}^*}(V)) (x,\theta) \le
  L_{{\phi}^*}^{(T-1)}
(u_{{\phi}^*}+\beta \rho_{{\phi}^*}(\|V\|_w \bar{w})) (x,\theta) \nonumber \\
&\le&
 L_{{\phi}^*}^{(T-1)}  (u_{{\phi}^*}+\alpha \beta\|V\|_w \bar{w}) (x,\theta)\nonumber \\
&=&
 L_{{\phi}^*}^{(T-2)} (u_{{\phi}^*}+\beta \rho_{{\phi}^*} (u_{{\phi}^*}+\alpha\beta\|V\|_w \bar{w})) (x,\theta),
\end{eqnarray}
for $x\in [0,\infty),\theta \in \mathcal{S}$ and $\bar{w}(\cdot) := \max_{\theta \in \mathcal{S}}w(\cdot,\theta)$.
Now by putting $g_1:=u_{{\phi}^*}=J_1(\cdot,\cdot, {\phi}^*),$ $g_2:=\alpha\beta\|V\|_w \bar{w}$ in Lemma
\ref{lem-covl}, we have  that
\begin{eqnarray}\label{it1}
  \beta\rho_{{\phi}^*}(u_{{\phi}^*}
+\alpha\beta\|V\|_w \bar{w})(x,\theta)&\le& \beta\rho_{{\phi}^*}(u_{{\phi}^*})(x,\theta)+ \beta\rho_{{\phi}^*}(\alpha\beta\|V\|_w \bar{w})(x,\theta) \nonumber \\
&\le& \beta\rho_{{\phi}^*}(u_{{\phi}^*})(x,\theta)+(\alpha\beta)^2\|V\|_w \bar{w}(x), \quad x\in[0,\infty), \theta \in \mathcal{S},
\end{eqnarray}
where the second inequality is due to \eqref{eqn-LvEstimate1} and the Assumption (F2).
Next combining the inequalities (26) and (27) gives:
\begin{eqnarray}
\label{it3}
V(x,\theta) &\le&  L_{{\phi}^*}^{(T-2)} (u_{{\phi}^*}+ \beta\rho_{{\phi}^*}(u_{{\phi}^*})+(\alpha\beta)^2\|V\|_w \bar{w}) (x,\theta) \nonumber\\ &=&
 L_{{\phi}^*}^{(T-3)} (u_{{\phi}^*}+ \beta\rho_{{\phi}^*} (u_{{\phi}^*}+ \beta\rho_{{\phi}^*} (u_{{\phi}^*})+(\alpha\beta)^2\|V\|_w \bar{w})) (x,\theta).
\end{eqnarray}
Now we repeat the above procedure as follows: from Lemma \ref{lem-incr} (see also \eqref{rem-J2})  the function
$$x\mapsto J_2(x,\theta,{\phi}^*)=u_{{\phi}^*}(x,\theta)+\beta\rho_{{\phi}^*}(u_{{\phi}^*})(x,\theta)$$
is non-decreasing. Hence, making use  again of
Lemma \ref{lem-covl} ($g_1:=J_2(\cdot,\cdot,{\phi}^*),$ $g_2:=(\alpha\beta)^2\|V\|_w \bar{w}$), \eqref{eqn-LvEstimate1} and (F2) we get
\begin{align} \label{it4}
& \beta\rho_{{\phi}^*} (u_{{\phi}^*}+ \beta\rho_{{\phi}^*}(u_{{\phi}^*})+(\alpha\beta)^2\|V\|_w \bar{w})(x,\theta)  = \beta\rho_{{\phi}^*}(J_2(\cdot,\cdot,{\phi}^*)+(\alpha\beta)^2\|V\|_w \bar{w})(x,\theta) \nonumber \\
 & \qquad  \le \beta\rho_{{\phi}^*}(J_2(\cdot,\cdot,{\phi}^*))(x,\theta)+\beta\rho_{{\phi}^*}((\alpha\beta)^2\|V\|_w \bar{w}) (x,\theta)\nonumber \\
& \qquad  \le \beta\rho_{{\phi}^*}(J_2(\cdot,\cdot,{\phi}^*))(x,\theta)+(\alpha\beta)^3\|V\|_w \bar{w}(x),\qquad x\in[0,\infty), \theta \in \mathcal{S}.
\end{align}
By combining (\ref{it3}) and (\ref{it4}) we obtain that
\begin{eqnarray*}
\label{it5}\nonumber
V(x,\theta)&\le&   L_{{\phi}^*}^{(T-3)}
 \left(u_{{\phi}^*}+ \beta\rho_{{\phi}^*}(J_2(\cdot,\cdot,{\phi}^*))+(\alpha\beta)^3\|V\|_w \bar{w}\right) (x,\theta)\\
 &=&  L_{{\phi}^*}^{(T-3)}\left(J_3(\cdot,\cdot,{\phi}^*)+(\alpha\beta)^3\|V\|_w \bar{w}\right) (x,\theta)\\
 &=&  L_{{\phi}^*}^{(T-4)}  \left(u_{{\phi}^*}+ \beta\rho_{{\phi}^*}(J_3(\cdot,\cdot,{\phi}^*)+(\alpha\beta)^3\|V\|_w \bar{w})\right) (x,\theta).
\end{eqnarray*}
Repeating this procedure, i.e., making use of Lemma \ref{lem-covl}  for the functions $g_1=J_k(\cdot,\cdot,{\phi}^*)$
(by Lemma \ref{lem-incr} it  is non-decreasing)  and $g_2:=(\alpha\beta)^k\|V\|_w \bar{w}$ for $k=3, \ldots, T-1$
 we finally deduce
\begin{equation}
\label{ii}
V(x,\theta) \le J_{T}(x,\theta,{\phi}^*)+(\alpha\beta)^{T}\|V\|_w \bar{w}(x),\quad x\in[0,\infty), \theta \in \mathcal{S}.
\end{equation}
Thus, letting $T\to\infty$ in \eqref{ii} and by noting that
$\alpha \beta < 1,$ it follows that
\begin{equation}
\label{itit}
V(x,\theta)\le J(x,\theta, {\phi}^*)\quad x\in [0,\infty), \theta \in \mathcal{S}.
\end{equation}
Now, (\ref{it}) and (\ref{itit}) combined together yield part (c). \hfill $\Box$
\end{proof}

\section{Euler Equation for Consumption}
\noindent The relation between the gain of utility of present consumption and the discounted expected value of that of the future consumption is expressed in Euler equation. The equation generally states that the gain in utility for saving now and then consuming in future, instead of immediate consumption should not, after discounting, be less or more than the utility gain of consuming now. This section is devoted to establish the Euler equation for optimal consumption. We impose additional conditions
that guarantee differentiability of functions which are required in formulating the model.

\begin{enumerate}
  \item[(U3)] The function $u:[0,\infty)\to [0,\infty)$ is continuously differentiable
  on $(0,\infty).$
  \item[(U4)] $u'_+(0)=\infty,$ where $u^{\prime}_+ (0)$ is the right-hand derivative of $u (x)$ at $x = 0$.
  \item[(F3)] The  function $f(\theta,\cdot,z):[0,\infty) \to [0,\infty)$ is
   continuously differentiable on $(0,\infty)$, for each $z\in [0,\infty), \theta\in \mathcal{S}$.
    \item[(F4)] $f(\theta,0,z)=0$ for all $z\ge 0$ and $\theta \in \mathcal{S}$.
    \item[(F5)] For almost every $z$ w.r.t. $\nu$, there is an investment  $y>0$ such that  $f'(\theta,y,z)>0,$
 for all $\theta\in \mathcal{S}$ where $f'(\theta,y,z):=\frac{\partial f(\theta,y,z)}{\partial y}.$
 \end{enumerate}

\noindent Assumption (F5) together with concavity (F1) imply that $f'_+(\theta,0,z)>0$ $\nu$-a.s. for every $\theta\in \mathcal{S}$. This also implies, in view of (F4), $f(\theta,y,z)>0$ for all $y>0$ for almost every $z$ w.r.t. $\nu$.

\begin{thm}\label{thm2} Assume (U1)-(U4) and (F1)-(F5). Then, we have the following.
\begin{itemize}
 \item[(a)]  For any $x\in (0,\infty)$ and $\theta \in \mathcal{S}$ the following Euler equation holds
 \begin{align}
  \label{e}
\nonumber u'(c^*(x,\theta))=\beta \frac{\sum_{\theta'\in \mathcal{S}} p_{\theta \theta'}  \int_{[0,\infty)} u'(c^*(f(\theta, {\phi}^*(x,\theta),z), \theta'))f'(\theta,{\phi}^*(x,\theta),z)e^{-\gamma
 V(f(\theta,{\phi}^*(x,\theta),z),\theta')} \nu(dz)}
  {\sum_{\theta'\in \mathcal{S}} p_{\theta \theta'} \int_{[0,\infty)}e^{-\gamma V(f(\theta,{\phi}^*(x,\theta),z),\theta')}\nu(dz)},\\
  \end{align}
  where $V$ is the value function obtained in Theorem \ref{mainthm}.
  \item[(b)] For each $\theta \in \mathcal{S}$, the functions $x\mapsto {\phi}^*(x,\theta)$ and $x\mapsto c^*(x,\theta)$ are increasing.
\end{itemize}
\end{thm}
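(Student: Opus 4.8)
\noindent The plan is to obtain \eqref{e} as the first–order (Euler) condition attached to the maximization in the Bellman equation \eqref{oe}, once the value function is known to be differentiable and the optimal investment is shown to be interior. Throughout, write $\widehat V(y,\theta)$ as in \eqref{b}, so that \eqref{oe} reads $V(x,\theta)=\sup_{y\in[0,x]}\bigl(u(x-y)+\beta\widehat V(y,\theta)\bigr)$ with unique maximizer $\phi^*(x,\theta)$; recall from Theorem \ref{mainthm} that $V(\cdot,\theta)$ and $\widehat V(\cdot,\theta)$ are strictly concave, that $\widehat V(\cdot,\theta)$ is continuous and non-decreasing, and that $\phi^*(\cdot,\theta),c^*(\cdot,\theta)$ are continuous and non-decreasing.

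First I would establish \emph{interiority}: $0<\phi^*(x,\theta)<x$ for every $x>0$ and $\theta\in\mathcal S$. For the upper bound, if $\phi^*(x,\theta)=x$ then \eqref{oe}--\eqref{oe_max} give $\beta\bigl(\widehat V(x,\theta)-\widehat V(y,\theta)\bigr)\ge u(x-y)$ for $y<x$; dividing by $x-y$ and letting $y\uparrow x$ forces the left derivative of $\widehat V(\cdot,\theta)$ at $x$ to be $+\infty$ (because $u'_+(0)=\infty$ by (U4)), which is impossible for a finite concave function at an interior point. For the lower bound, note $V(0,\cdot)\equiv 0$ (by (F4) the income is absorbed at $0$, equivalently $(V(0,\theta))_\theta$ solves a contraction fixed–point system whose solution is $0$), hence $\widehat V(0,\theta)=-\tfrac1\gamma\ln 1=0$; if $\phi^*(x,\theta)=0$ then \eqref{oe}--\eqref{oe_max} give $\beta\bigl(\widehat V(y,\theta)-\widehat V(0,\theta)\bigr)\le u(x)-u(x-y)$ for $0<y<x$, so $\beta\,\widehat V'_+(0,\theta)\le u'(x)<\infty$ by (U3). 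To contradict this I would show $\widehat V'_+(0,\theta)=+\infty$: using $-\ln(1-t)\ge t$ and $V\ge u$ (immediate from \eqref{oe}) one gets $\widehat V(y,\theta)\ge \tfrac1\gamma p_{\theta\theta_0'}\int_{[0,\infty)}\bigl(1-e^{-\gamma u(f(\theta,y,z))}\bigr)\nu(dz)$ for any $\theta_0'$ with $p_{\theta\theta_0'}>0$; for $\nu$-a.e.\ $z$, concavity of $u$ and of $f(\theta,\cdot,z)$ together with $u(0)=f(\theta,0,z)=0$ yields $u(f(\theta,y,z))/y\to\infty$ as $y\downarrow 0$ (the factor $u(f)/f\to u'_+(0)=\infty$ and $f(\theta,y,z)/y\to f'_+(\theta,0,z)>0$, the latter by (F5) and (F1)), and Fatou's lemma then gives $\widehat V(y,\theta)/y\to\infty$.

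Next I would prove the \emph{envelope condition} $\partial_xV(x,\theta)=u'(c^*(x,\theta))$ on $(0,\infty)$ by the Benveniste--Scheinkman argument: fixing $x_0>0$ and $y_0=\phi^*(x_0,\theta)<x_0$, the function $W(x):=u(x-y_0)+\beta\widehat V(y_0,\theta)$ is concave, continuously differentiable near $x_0$ by (U3), satisfies $W(x_0)=V(x_0,\theta)$ by \eqref{oe_max} and $W\le V(\cdot,\theta)$ by \eqref{oe} (feasibility of $y_0$ for $x$ near $x_0$), so $V(\cdot,\theta)$ is differentiable at $x_0$ with $\partial_xV(x_0,\theta)=W'(x_0)=u'(c^*(x_0,\theta))$; in particular $V(\cdot,\theta')$ is differentiable on all of $(0,\infty)$. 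Then, since for $x>0$ the interior point $y^*:=\phi^*(x,\theta)\in(0,x)$ maximizes the differentiable map $y\mapsto u(x-y)+\beta\widehat V(y,\theta)$ (differentiability of $\widehat V(\cdot,\theta)$ on $(0,\infty)$ following from (F3), the differentiability of $V$, and dominated convergence), the first–order condition gives $u'(c^*(x,\theta))=\beta\,\partial_y\widehat V(y^*,\theta)$; differentiating \eqref{b} under the integral sign and substituting $\partial_xV(f(\theta,y^*,z),\theta')=u'(c^*(f(\theta,y^*,z),\theta'))$ yields exactly \eqref{e}, proving (a). For (b), by Theorem \ref{mainthm}(b) both $\phi^*(\cdot,\theta)$ and $c^*(\cdot,\theta)$ are already non-decreasing, so it suffices to exclude constancy on a nondegenerate interval $[x',x'']\subset(0,\infty)$; if $\phi^*(\cdot,\theta)$ were constant there, the first–order condition makes $u'(c^*(\cdot,\theta))$, hence $c^*(\cdot,\theta)$ (as $u'$ is strictly decreasing), constant, contradicting $\phi^*(x,\theta)+c^*(x,\theta)=x$; symmetrically, constancy of $c^*(\cdot,\theta)$ forces $\partial_y\widehat V(\phi^*(\cdot,\theta),\theta)$, hence $\phi^*(\cdot,\theta)$ (as $\widehat V(\cdot,\theta)$ is strictly concave), constant, the same contradiction. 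Hence both maps are increasing on $[0,\infty)$.

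The step I expect to be the main obstacle is the rigorous differentiation of $\widehat V$ under the integral sign and, relatedly, the well-posedness of the two integrals in \eqref{e}: near $z$ for which $f(\theta,\phi^*(x,\theta),z)$ is small the factor $u'(c^*(\cdot,\theta'))$ blows up, while for large $z$ one must dominate $u'(c^*(\cdot,\theta'))\,f'(\theta,\phi^*(x,\theta),z)\,e^{-\gamma V}$; both are to be controlled by the growth conditions (U2), (F2), (F3) (which bound $w\circ f$, hence $V\circ f$ and, via concavity, $\partial_xV$), the bound $e^{-\gamma V}\le 1$, and the monotonicity of $f'(\theta,\cdot,z)$ and of $c^*(\cdot,\theta')$, after which dominated convergence applies on a neighbourhood of $\phi^*(x,\theta)$. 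A secondary delicate point is the one-sided limit $\widehat V'_+(0,\theta)=+\infty$ used in the interiority step, which is precisely what forces strictly positive optimal investment; the remaining ingredients (Benveniste--Scheinkman, the first–order condition, and the monotonicity bootstrap in part (b)) are routine once these two points are settled.
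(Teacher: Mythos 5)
Your proposal is correct in substance and its skeleton matches the paper's: interiority of $\phi^*$ (Lemma \ref{lem_kami}), driven by $\widehat V'_+(0,\theta)=\infty$ (Lemma \ref{lem_b}); an envelope/Benveniste--Scheinkman identity $V'(x,\theta)=u'(c^*(x,\theta))$ (Lemma \ref{lem_env}); the first--order condition at the interior maximizer; and strict monotonicity in (b) from strict concavity of $u$ and of $V$ (your ``no constancy on an interval'' variant is equivalent to the paper's two-case contradiction via the Euler equation and the envelope identity). Two local choices differ. First, for $\widehat V'_+(0,\theta)=\infty$ you use $V\ge u$, the bound $-\ln(1-t)\ge t$ and Fatou, whereas the paper uses Theorem \ref{P}(vi) to pull $1/y_n$ inside the exponent and then monotone convergence of the concave difference quotients; both work. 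Second, and more importantly, where you propose to differentiate \eqref{b} under the integral sign, the paper deliberately avoids this: it bounds the right and left difference quotients of $\widehat V$ via the worst-case measure $\nu'(\theta,y)$, Jensen's inequality (Theorem \ref{P}(iii)) and the mean value theorem, obtaining $\widehat V'_+(y,\theta)\le G(\theta,y)\le \widehat V'_-(y,\theta)$, then proves continuity of $G(\theta,\cdot)$ by dominated convergence (with the concavity-based dominating function $g(\theta,\epsilon,z,\theta')$, integrable because $G(\theta,\epsilon)\le\widehat V'_-(\epsilon,\theta)<\infty$), and finally squeezes with concavity to get continuous differentiability of $\widehat V$ and $\widehat V'=G$; the integrability of the integrand in \eqref{e} thus comes for free from the sandwich rather than from a separate growth estimate.

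The one step you leave unexecuted --- the domination needed for differentiating under the integral --- is exactly where the paper's technical effort sits, so be aware it is not ``routine''; but your sketch can be closed. For $y$ in a compact neighbourhood $[\epsilon,\epsilon']\subset(0,\infty)$ of $\phi^*(x,\theta)$, concavity and monotonicity give $V'(f(\theta,y,z),\theta')f'(\theta,y,z)e^{-\gamma V(f(\theta,y,z),\theta')}\le V'(f(\theta,\epsilon,z),\theta')f'(\theta,\epsilon,z)$, and this dominating function is $\nu$-integrable because, by concavity and $f(\theta,0,z)=0$ (F4), $f'(\theta,\epsilon,z)\le f(\theta,\epsilon,z)/\epsilon$, while by concavity of $V(\cdot,\theta')$ and $V\ge 0$, $V'(t,\theta')\le 2\|V\|_w\, w(t,\theta')/t$, so the product is at most $\tfrac{2\|V\|_w}{\epsilon}\,w(f(\theta,\epsilon,z),\theta')$, whose integral is finite by (F2). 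With that estimate your route (Benveniste--Scheinkman for $V$, then direct differentiation of \eqref{b} and substitution of $V'=u'(c^*)$) is a legitimate, slightly more computational alternative to the paper's difference-quotient sandwich; what the paper's method buys is that it never needs an a priori integrable derivative bound and simultaneously delivers continuity of $\widehat V'$, which is what justifies the first--order condition cleanly.
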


\noindent It is important to note that the above Euler equation for consumption involves the value function $V$. This is not the case when the objective is to maximize the standard expected utility. It is evident that setting $\gamma= 0$ in \eqref{e} removes the $V$ dependence, and coincides with the well-known Euler equation for the model with the expected utility. We also note that for a nonzero $\gamma$, the right side involves an expectation w.r.t. a different probability measure that depends on the value function. In the subsequent lemmas we shall assume that the conditions (U1)-(U4) and (F1)-(F5) hold true.

\begin{lem} \label{lem_b}
Let $\widehat{V}$ be as in \eqref{b} then
\begin{equation} \label{lem_bb}
\widehat{V}'_+(0,\theta)=\infty \quad \textrm{for each } \quad \theta \in {\cal S},
\end{equation}
where $\widehat{V}'(y,\theta):=\frac{\partial \widehat{V}(y,\theta)}{\partial y}$.
\end{lem}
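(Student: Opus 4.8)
The plan is to bound $\widehat V(\cdot,\theta)$ from below near $0$ by a quantity built only from $u$ and $f$, and then read off the right derivative with Fatou's lemma. First I would record two elementary consequences of \eqref{oe}. Taking the feasible point $y=0$ and using $\widehat V\ge 0$ (Lemma \ref{lem-basic}) gives $V(x,\theta')\ge u(x)$ for all $x\ge 0$, $\theta'\in\mathcal S$. Evaluating \eqref{oe} at $x=0$ gives $V(0,\theta)=u(0)+\beta\widehat V(0,\theta)=\beta\widehat V(0,\theta)$, while Jensen's inequality (cf.\ \eqref{eqn-LvEstimate1}) together with (F4) gives $\widehat V(0,\theta)\le\sum_{\theta'}p_{\theta\theta'}V(0,\theta')$; combining these and maximising over $\theta$ forces $(1-\beta)\max_\theta V(0,\theta)\le 0$, so (as $V\ge0$) $V(0,\cdot)\equiv 0$, and hence, by (F4) again, $\widehat V(0,\theta)=0$ for every $\theta$.

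Next, since $V\ge 0$ we have $e^{-\gamma V}\le 1$, so applying the elementary inequality $-\tfrac1\gamma\ln q\ge\tfrac1\gamma(1-q)$ (valid for $q\in(0,1]$) to $q=\sum_{\theta'}p_{\theta\theta'}\int e^{-\gamma V(f(\theta,y,z),\theta')}\nu(dz)$ yields
\[
\widehat V(y,\theta)\ \ge\ \frac1\gamma\sum_{\theta'\in\mathcal S}p_{\theta\theta'}\int_{[0,\infty)}\bigl(1-e^{-\gamma V(f(\theta,y,z),\theta')}\bigr)\nu(dz)\ \ge\ \frac1\gamma\int_{[0,\infty)}\bigl(1-e^{-\gamma u(f(\theta,y,z))}\bigr)\nu(dz),
\]
the second inequality using $V(f(\theta,y,z),\theta')\ge u(f(\theta,y,z))$ and $\sum_{\theta'}p_{\theta\theta'}=1$. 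Concavity of $f(\theta,\cdot,z)$ with $f(\theta,0,z)=0$ (from (F1), (F4)) gives $f(\theta,y,z)\ge y\,f(\theta,1,z)$ for $y\in[0,1]$, and then, $u$ being nondecreasing, for $y\in(0,1]$
\[
\frac{\widehat V(y,\theta)}{y}\ \ge\ \frac1\gamma\int_{[0,\infty)}\frac{1-e^{-\gamma u(y f(\theta,1,z))}}{y}\,\nu(dz).
\]

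It then suffices to show the right-hand side tends to $\infty$ as $y\downarrow 0$. For $\nu$-a.e.\ $z$ one has $f(\theta,1,z)>0$ — exactly the observation recorded just after (F5) — and for such $z$, writing $t:=y\,f(\theta,1,z)$,
\[
\frac{1-e^{-\gamma u(y f(\theta,1,z))}}{y}\ =\ \gamma\,f(\theta,1,z)\cdot\frac{1-e^{-\gamma u(t)}}{\gamma u(t)}\cdot\frac{u(t)}{t},
\]
which tends to $\infty$ as $y\downarrow 0$, since $u(t)\downarrow0$ with $u(t)>0$ (as $u$ is strictly increasing and $u(0)=0$), $(1-e^{-w})/w\to1$, and $u(t)/t\to u'_+(0)=\infty$ by (U4). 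As the integrands are nonnegative, Fatou's lemma bounds the $\liminf$ of the integrals below by the integral of $+\infty$ over a set of full $\nu$-measure, hence by $+\infty$. Since $\widehat V(\cdot,\theta)$ is concave (Lemma \ref{lem-basic}), its right derivative at $0$ exists in $(-\infty,\infty]$ and equals $\lim_{y\downarrow0}\widehat V(y,\theta)/y$ (recalling $\widehat V(0,\theta)=0$), so $\widehat V'_+(0,\theta)=\infty$.

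The only genuinely delicate step is the passage of the limit inside the integral: because $u$ and $f$ are allowed to be unbounded there is no dominating function, but the integrands are nonnegative, so Fatou's lemma is precisely the right tool — all that is needed is that the pointwise blow-up holds $\nu$-a.s., which is guaranteed by the (F4)--(F5) observation preceding the lemma. Everything else reduces to the inequalities $-\ln q\ge1-q$, $V\ge u\circ f$, and concavity of $f(\theta,\cdot,z)$.
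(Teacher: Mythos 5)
Your proof is correct, and it reaches the conclusion by a route that differs from the paper's in its key technical steps. Both arguments share the same skeleton: the bound $V(x,\theta')\ge u(x)$ obtained from \eqref{oe} with $y=0$, the fact $\widehat V(0,\theta)=0$, a lower bound on the difference quotient $\widehat V(y,\theta)/y$, and the final appeal to (U4) together with the $\nu$-a.e.\ positivity coming from (F4)--(F5). Your derivation of $V(0,\cdot)\equiv 0$ via $V(0,\theta)=\beta\widehat V(0,\theta)\le\beta\max_{\theta'}V(0,\theta')$ is a clean, self-contained substitute for the paper's appeal to $\phi^*(0,\theta)=0$ in \eqref{oe_max}. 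Where you diverge is in how the factor $1/y$ is handled: the paper stays inside the entropic structure, pushing $1/y_n$ through the risk operator with the scaling property (vi) of Theorem \ref{P}, then uses the monotonicity of $u(f(\theta,y_n,z))/y_n$ (Lemma \ref{lem-concave1}) and bounded convergence to identify the limit as $-\frac1\gamma\ln\int_{[0,\infty)} e^{-\gamma u'_+(0)f'_+(\theta,0,z)}\nu(dz)$, which is $+\infty$ by (U4) and (F5). You instead linearize the logarithm with $-\ln q\ge 1-q$, which removes the log entirely and reduces matters to a single plain $\nu$-integral; you replace the derivative $f'_+(\theta,0,z)$ by the chord bound $f(\theta,y,z)\ge y\,f(\theta,1,z)$, and you conclude with Fatou. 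This is legitimate: the linearization is asymptotically lossless since the inner expectation tends to $1$, the a.e.\ positivity of $f(\theta,1,z)$ is exactly the remark recorded after (F5), the division by $u(t)$ is safe because (U1) (increasing plus strict concavity, $u(0)=0$) forces $u(t)>0$ for $t>0$, and the identification of $\widehat V'_+(0,\theta)$ with $\lim_{y\downarrow 0}\widehat V(y,\theta)/y$ is justified by concavity of $\widehat V$ (Lemma \ref{lem-basic}). What your version buys is economy of tools: no use of Theorem \ref{P}(vi), of Lemma \ref{lem-concave1}, or of one-sided derivatives of $f$. What the paper's version buys is that it remains within the risk-sensitive (logarithmic) framework and yields the explicit intermediate estimate $\widehat V'_+(0,\theta)\ge -\frac1\gamma\ln\int_{[0,\infty)} e^{-\gamma u'_+(0)f'_+(\theta,0,z)}\nu(dz)$, which would still carry quantitative content if (U4) were weakened to a finite slope at zero.
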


\begin{proof} Let us fix $\theta \in \mathcal{S}$.
To prove \eqref{lem_bb}, take any sequence  $y_n\to 0^+$ as $n\to\infty.$
By Assumptions (F4) and (U1) and the fact that ${\phi}^*(0,\theta) =0$, we get from \eqref{oe_max} that $V(0,\theta)=0$ and hence $\widehat{V}(0,\theta)=0$ from \eqref{b}.
Again, from \eqref{ineqV}, using non-negativity of $V$, and Theorem \ref{P}(i), we get $V(x,\theta) \ge u(x)$ by setting $y=0$.
Consequently, by (ii) and (vi) of Theorem \ref{P} and \eqref{b} we have
\begin{align}\label{lem_b0}
\frac{\widehat{V}(y_n,\theta)-\widehat{V}(0,\theta)}{y_n}& = -\frac1{\gamma y_n}
  \ln\sum_{\theta'\in \mathcal{S}} p_{\theta \theta'} \int_{[0,\infty)}e^{-\gamma V(f(\theta,y_n,z),\theta')}\nu(dz) \nonumber\\
  & \ge -\frac1{\gamma y_n}
  \ln\sum_{\theta'\in \mathcal{S}} p_{\theta \theta'} \int_{[0,\infty)}e^{-\gamma u(f(\theta,y_n,z))}\nu(dz) \nonumber\\
  &  \ge -\frac1{\gamma}
  \ln\sum_{\theta'\in \mathcal{S}} p_{\theta \theta'} \int_{[0,\infty)}e^{-\gamma \frac{u(f(\theta,y_n,z))}{y_n}}\nu(dz)
\end{align}
for all $y_n\in (0,1]$. Recall that from (U1), $u(0) = 0$ and that from (F4), $f (\theta, 0, z) = 0$. Then for any $z \ge 0$,
\begin{equation}
\nonumber\frac{ u(f(\theta,y_n,z))}{y_n}=\frac{ u(f(\theta,y_n,z))-u(f(\theta,0,z))}{f(\theta,y_n,z)-f(\theta,0,z)}\frac{f(\theta,y_n,z)-f(\theta,0,z)}{y_n}.
\end{equation}
Letting $n\to\infty$ on both sides of above equality, we obtain that
\begin{equation}\label{lem_b2}
\frac{ u(f(\theta,y_n,z))}{y_n} \to u'_+(f(\theta,0,z))f'_+(\theta,0,z) =  u'_+(0)f'_+(\theta,0,z).
\end{equation}
Since $u(\cdot)$ and $f(\theta,\cdot,z)$
are concave and $u$ is increasing, the composition $u(f(\theta,\cdot,z))$ is concave and thus Lemma \ref{lem-concave1} is applicable to $u(f(\theta,\cdot,z))$ as $u(f(\theta,0,z))=0$. As a result, the sequence in \eqref{lem_b2} is monotone increasing. As the integrands in \eqref{lem_b0} are uniformly bounded by 1, and $\nu$ is a finite measure, the integrals converge using the convergence of \eqref{lem_b2}. To be more precise
\begin{align}\label{inf}
\liminf_{y_n \downarrow 0} \frac{\widehat{V}(y_n,\theta)-\widehat{V}(0,\theta)}{y_n} \ge -\frac1{\gamma}
  \ln\int_{[0,\infty)}e^{-\gamma u'_+(0)f'_+(\theta,0,z)}\nu(dz).
  \end{align}
It is evident from Assumption (F5) that there is a $y'>0$ such that $\nu\{z: f'(\theta,y',z)>0\}=1$ for all $\theta \in \mathcal{S}$. Now due to the concavity of $f$ in $y$, i.e., $f'(\theta,y,z)\ge f'(\theta,y',z)$ for all $y\in (0,y')$ and $\theta \in \mathcal{S}$, we get $\nu\{z: f'(\theta,y,z)>0\}\ge \nu\{z: f'(\theta,y',z)>0\}=1$ for all $y\in (0,y')$ and $\theta \in \mathcal{S}$. Thus $\nu\{z: f'_+(\theta,0,z)>0\}\ge \nu\{z: f'(\theta,y',z)>0\}=1$. Hence by (U4) $e^{-\gamma u'_+(0)f'_+(\theta,0,z)} =0$ almost everywhere w.r.t. $\nu$ for each $\theta \in \mathcal{S}$. Then the right side of \eqref{inf} is positive infinity. Therefore $\widehat{V}'_+(0,\theta)$ exists as infinity.
\end{proof}

\noindent We recall that the optimal policy ${\phi}^*(x,\cdot)$ (as  in \eqref{oe_max}) is in $[0,x]$ for each $x\ge 0$. We show in the following lemma that for each $x>0$, the optimal investment ${\phi}^*(x,\cdot)$ is neither zero nor complete $x$. The proof is motivated from \cite[Lemma A.5]{kami}.

 \begin{lem} \label{lem_kami}
Let ${\phi}^*$ be as in (\ref{oe_max}). Then, ${\phi}^*(x,\theta)\in (0,x)$ for any $x\in(0,\infty)$ and $\theta \in \mathcal{S}$.
\end{lem}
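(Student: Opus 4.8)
The plan is to rule out the two extreme cases ${\phi}^*(x,\theta)=0$ and ${\phi}^*(x,\theta)=x$ separately by a first-order (marginal) comparison argument, exploiting the differentiability afforded by (U3), (F3) and the Inada-type conditions (U4), (F5). Fix $x\in(0,\infty)$ and $\theta\in\mathcal{S}$, and recall from Part (b) of Theorem \ref{mainthm} that ${\phi}^*(x,\theta)$ is the unique maximizer of $y\mapsto u(x-y)+\beta\widehat{V}(y,\theta)$ on $[0,x]$, where $\widehat{V}$ is strictly concave, continuous and non-decreasing by Lemma \ref{lem-basic}, and that $u$ is strictly concave, increasing and continuously differentiable on $(0,\infty)$ by (U1) and (U3).

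First I would suppose, for contradiction, that ${\phi}^*(x,\theta)=0$, so that the optimal consumption is $c^*(x,\theta)=x>0$. By strict concavity of the objective, the right-hand derivative at $y=0$ must be $\le 0$, i.e.\ $-u'(x)+\beta\widehat{V}'_+(0,\theta)\le 0$. But $u'(x)<\infty$ since $x>0$ and $u$ is $C^1$ on $(0,\infty)$, whereas Lemma \ref{lem_b} gives $\widehat{V}'_+(0,\theta)=\infty$. Hence $-u'(x)+\beta\widehat{V}'_+(0,\theta)=\infty>0$, contradicting optimality. Therefore ${\phi}^*(x,\theta)>0$.

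Next I would suppose, again for contradiction, that ${\phi}^*(x,\theta)=x$, so that the optimal consumption is $c^*(x,\theta)=0$. Strict concavity forces the left-hand derivative of the objective at $y=x$ to be $\ge 0$, i.e.\ $-u'_+(x-y)\big|_{y=x}+\beta\widehat{V}'_-(x,\theta)\ge 0$, that is $-u'_+(0)+\beta\widehat{V}'_-(x,\theta)\ge 0$. By (U4), $u'_+(0)=\infty$. On the other hand $\widehat{V}'_-(x,\theta)$ is finite: indeed, by concavity of $\widehat{V}(\cdot,\theta)$ its left derivative at the interior point $x>0$ is bounded above by, say, the slope $\big(\widehat{V}(x,\theta)-\widehat{V}(x/2,\theta)\big)/(x/2)<\infty$, using that $\widehat{V}(\cdot,\theta)$ is real-valued (it lies in $B_w$). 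Thus $-u'_+(0)+\beta\widehat{V}'_-(x,\theta)=-\infty<0$, a contradiction. Hence ${\phi}^*(x,\theta)<x$, and combining the two cases yields ${\phi}^*(x,\theta)\in(0,x)$.

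I expect the only delicate point to be the rigorous handling of one-sided derivatives of the concave objective at the endpoints $y=0$ and $y=x$: one must argue that the maximizer being at an endpoint forces the appropriate one-sided directional derivative of $u(x-\cdot)+\beta\widehat{V}(\cdot,\theta)$ to have the "wrong" sign, and that this directional derivative is the sum of the corresponding one-sided derivatives of the two concave summands (valid since both are finite, except precisely the infinite terms supplied by Lemma \ref{lem_b} and (U4), which is exactly what drives the contradiction). The finiteness of $u'(x)$ for $x>0$, of $\widehat{V}'_-(x,\theta)$ at interior $x$, and the infinite values $u'_+(0)=\infty$ and $\widehat{V}'_+(0,\theta)=\infty$ are the four ingredients that make the argument work, and each is already available from (U3), (U4), Lemma \ref{lem-basic} and Lemma \ref{lem_b}.
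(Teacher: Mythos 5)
Your argument is correct and is essentially the paper's own proof: both rule out $\phi^*(x,\theta)=0$ via $\widehat{V}'_+(0,\theta)=\infty$ from Lemma \ref{lem_b} and rule out $\phi^*(x,\theta)=x$ via $u'_+(0)=\infty$ from (U4) together with finiteness of $\widehat{V}'_-$ at interior points. The only cosmetic difference is that the paper phrases the endpoint first-order conditions through difference quotients of the optimality inequality $u(x-y)+\beta\widehat{V}(y,\theta)\le u(x-\phi^*(x,\theta))+\beta\widehat{V}(\phi^*(x,\theta),\theta)$ rather than writing one-sided derivatives of the objective directly.
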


\begin{proof}  We define, for each $x \in [0,\infty)$, and  $\theta \in \mathcal{S}$
\begin{equation}
\nonumber   W(x,\theta,y) := u(x-y) + \beta \widehat{V}(y,\theta), \quad \forall y \in [0,x].
\end{equation}
Then, since ${\phi}^*$ is an optimal policy, we have \begin{equation}\label{kami-2}
    W(x,\theta,y) \le W(x,\theta,{\phi}^*(x,\theta)), \quad \forall y \in [0,x].
\end{equation}
Let if possible ${\phi}^*(x,\theta) = 0$ for a given $x>0$, and $\theta\in {\cal S}$.
Then for $y \in ({\phi}^*(x,\theta),x)$, \eqref{kami-2} gives
\begin{align*}
    \beta \frac{\widehat{V}(y,\theta) - \widehat{V}({\phi}^*(x,\theta),\theta)}{y-{\phi}^*(x,\theta)} \le \frac{u(x-{\phi}^*(x,\theta)) - u(x-y)}{y-{\phi}^*(x,\theta)}.
\end{align*}
Letting $y \downarrow {\phi}^*(x,\theta)$ yields $\beta \widehat{V}'_+({\phi}^*(x,\theta),\theta) \le u'_{-}(x-{\phi}^*(x,\theta))<\infty$. That is  $\widehat{V}'_+(0,\theta)<\infty$, a contradiction to Lemma \ref{lem_b}. Thus ${\phi}^*(x,\theta) \neq 0$ for every $x$ and $\theta$.\\

\noindent Now let if possible ${\phi}^*(x,\theta) = x$. Observe that, for $y \in (0,{\phi}^*(x,\theta))$, \eqref{kami-2}  gives
\begin{align*}
    \beta \frac{\widehat{V}({\phi}^*(x,\theta),\theta) - \widehat{V}(y,\theta)}{{\phi}^*(x,\theta)-y} \ge \frac{u(x-y) - u(x-{\phi}^*(x,\theta))}{{\phi}^*(x,\theta)-y}.
\end{align*}
Letting $y \uparrow {\phi}^*(x,\theta)$ yields $\beta \widehat{V}'_{-}({\phi}^*(x,\theta),\theta) \ge u_{+}'(x-{\phi}^*(x,\theta))$, where $\widehat{V}'_{-}$ is the left hand derivative of $\widehat{V}$, which exists everywhere due to strict concavity of $\widehat{V}$ from Theorem \ref{mainthm}(b). Utilizing ${\phi}^*(x,\theta) =x$,  $u_{+}'(x-{\phi}^*(x,\theta))=\infty$ due to (U4).
This gives $\widehat{V}'_{-}(x,\theta) = \infty$, and this is a contradiction to the finiteness of $\widehat{V}'_{-}(y,\theta), \forall y >0,$ which is implied by the strict concavity of $\widehat{V}$ from Theorem  \ref{mainthm}(b). Hence ${\phi}^*(x,\theta) \neq x$ for every $x$ and $\theta$.
\end{proof}
The proof of following Lemma is along the similar lines as the proofs of
\cite[Proposition 12.1.18 and Corollary 12.1.19]{st}.
\begin{lem} \label{lem_env}
For each $\theta \in \mathcal{S}$, the function $V(\cdot,\theta)$  is continuously differentiable on $(0,\infty)$ and
$V'(x,\theta)=u'(c^*(x,\theta)),$ for $x\in(0,\infty)$. Here $V'(x,\theta) := \frac{\partial V(x,\theta)}{\partial x}$.
\end{lem}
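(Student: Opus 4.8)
The plan is to apply the classical envelope theorem for concave dynamic programming, in the spirit of \cite[Proposition 12.1.18 and Corollary 12.1.19]{st}, to the optimality equation written in the ``consumption form''
$$V(x,\theta)=\sup_{a\in[0,x]}\bigl(u(a)+\beta\widehat{V}(x-a,\theta)\bigr),$$
which was derived in the proof of Theorem \ref{mainthm}(b). By Lemma \ref{lem_kami} the optimal consumption $c^*(x,\theta)=x-{\phi}^*(x,\theta)$ lies in the open interval $(0,x)$ for every $x>0$ and $\theta\in\mathcal S$, so the maximizer is interior. Fix $\theta$ and $x_0>0$. The idea is to sandwich $V(\cdot,\theta)$ between two functions that agree at $x_0$ and are differentiable there with the common derivative $u'(c^*(x_0,\theta))$; concavity of $V(\cdot,\theta)$ then forces differentiability of $V(\cdot,\theta)$ at $x_0$ with that derivative.

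First I would note the upper bound: for any $x$ near $x_0$, taking the (suboptimal at $x$, but feasible once $x>{\phi}^*(x_0,\theta)$) choice $a=x-{\phi}^*(x_0,\theta)$ gives
$$V(x,\theta)\ge u(x-{\phi}^*(x_0,\theta))+\beta\widehat{V}({\phi}^*(x_0,\theta),\theta)=:g(x),$$
with equality at $x=x_0$ by \eqref{oe_max}. Since $u$ is continuously differentiable on $(0,\infty)$ by (U3) and $x_0-{\phi}^*(x_0,\theta)=c^*(x_0,\theta)>0$ by Lemma \ref{lem_kami}, the function $g$ is differentiable at $x_0$ with $g'(x_0)=u'(c^*(x_0,\theta))$. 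Thus $V(\cdot,\theta)-g$ is nonnegative near $x_0$ and vanishes at $x_0$, so it has a local minimum there; being a difference of a concave and a smooth function this yields, via the usual one-sided difference-quotient argument, $V'_-(x_0,\theta)\le u'(c^*(x_0,\theta))\le V'_+(x_0,\theta)$, where the one-sided derivatives of $V(\cdot,\theta)$ exist by concavity (Part (a) of Theorem \ref{mainthm}). Combined with $V'_+(x_0,\theta)\le V'_-(x_0,\theta)$ from concavity, all three coincide, so $V(\cdot,\theta)$ is differentiable at $x_0$ with $V'(x_0,\theta)=u'(c^*(x_0,\theta))$.

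Finally, continuity of $x\mapsto V'(x,\theta)$ on $(0,\infty)$ follows because $c^*(\cdot,\theta)$ is continuous (Theorem \ref{mainthm}(b)), takes values in $(0,\infty)$ on $(0,\infty)$ (Lemma \ref{lem_kami}), and $u'$ is continuous on $(0,\infty)$ by (U3); hence $V'(\cdot,\theta)=u'\circ c^*(\cdot,\theta)$ is a composition of continuous functions. I expect the only delicate point to be the careful handling of the one-sided derivatives near $x_0$ — in particular justifying that $a=x-{\phi}^*(x_0,\theta)\in[0,x]$ is feasible for $x$ in a one-sided neighbourhood (it is, for $x\ge{\phi}^*(x_0,\theta)$, and since ${\phi}^*(x_0,\theta)<x_0$ this includes a full two-sided neighbourhood of $x_0$) and that the inequalities on difference quotients go in the right direction; the rest is routine given Theorem \ref{mainthm} and Lemma \ref{lem_kami}.
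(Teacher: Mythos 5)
Your proposal is correct and follows essentially the same route as the paper's proof: fix the investment at its optimal level ${\phi}^*(x_0,\theta)$ (interior by Lemma \ref{lem_kami}), use the resulting suboptimality inequality $V(x,\theta)\ge u(x-{\phi}^*(x_0,\theta))+\beta\widehat{V}({\phi}^*(x_0,\theta),\theta)$ with equality at $x_0$ to squeeze the one-sided difference quotients, and conclude $V'_-\le u'(c^*)\le V'_+$, which together with concavity of $V(\cdot,\theta)$ gives differentiability with $V'=u'\circ c^*$. Your explicit final step deducing continuity of $V'(\cdot,\theta)$ from continuity of $c^*(\cdot,\theta)$ and of $u'$ is a point the paper leaves implicit, but it is the same argument in substance.
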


\begin{proof} We first note that $V_+'$ and $V_-'$ exist by strict concavity of $V$ from part (a) of Theorem \ref{mainthm}. Moreover, since $V$ is concave, $V_+'(x,\theta) \le V_{-}'(x,\theta)$  for each  $x>0$, $\theta \in \mathcal{S}$. Thus for proving differentiability, it is enough to show $V_+'\ge  V_{-}'$.

\noindent To this end we fix $x>0$, $\theta \in \mathcal{S}$.
From Lemma \ref{lem_kami} we have ${\phi}^*(x,\theta) < x$. Consequently, there exists an open neighborhood $G$ of zero with
$$ 0 \le {\phi}^*(x,\theta) \le  x + h,  \textrm{ for all } h \in G. $$
Therefore, $W(x+h,\theta, {\phi}^*(x,\theta))$ is defined where for all $y \in [0,x]$, $W$ is given by
\begin{equation}
\nonumber    W(x,\theta,y) = u(x-y) + \beta \widehat{V}(y,\theta) \quad \le V(x,\theta)
\end{equation}
as in the proof of Lemma \ref{lem_kami}. Since ${\phi}^*$ is optimal in Theorem \ref{mainthm} (c), we have $
    W(x+h,\theta, {\phi}^*(x,\theta))  \le  V(x+h,\theta) $ for all $h \in G$. Then, it follows that,
\begin{align}\label{lem_env-2}
     V(x+h,\theta)  - V(x,\theta) & \ge  W(x+h,\theta, {\phi}^*(x,\theta)) - W(x,\theta, {\phi}^*(x,\theta)) \nonumber\\
     & = u(x+h - {\phi}^*(x,\theta)) -  u(x - {\phi}^*(x,\theta)) \qquad \forall h \in G.
\end{align}
By replacing $h$ by $h_n$ in  \eqref{lem_env-2}, where  $\{h_n\}_{n \in \mathbb{N}} \subset G $ is a sequence such that $h_n > 0$ and $ h_n \downarrow 0$,  we obtain
\begin{align*}
     \frac{V(x+h_n,\theta)  - V(x,\theta) }{h_n}& \ge \frac{u(x+h_n - {\phi}^*(x,\theta)) -  u(x - {\phi}^*(x,\theta))}{h_n}, \qquad \forall n \in \mathbb{N}.
\end{align*}
Hence, by letting $n \to \infty$ on the both sides of above inequality, and using the continuous differentiablility of $u(x)$, and continuity of $V(x,\theta)$ in $x$, we get
$V_+'(x,\theta) \ge u'(x-{\phi}^*(x,\theta))$. 
Next, by taking any sequence $\{h_n\}_{n \in \mathbb{N}} \subset G $ such that
$h_n <0 $ and $ h_n \uparrow 0$, and using \eqref{lem_env-2},  we obtain
\begin{align*}
     \frac{V(x+h_n,\theta)  - V(x,\theta) }{h_n}& \le \frac{u(x+h_n - {\phi}^*(x,\theta)) -  u(x - {\phi}^*(x,\theta))}{h_n}, \qquad \forall n \in \mathbb{N}.
\end{align*}
By taking limit $n \to \infty$ on both sides of above inequality we get, as earlier, $V_{-}'(x,\theta) \le u'(x-{\phi}^*(x,\theta))$.
Thus, $V_{-}'(x,\theta) \le u'(x-{\phi}^*(x,\theta)) \le V_+'(x,\theta)$ as desired. Hence, the left and right derivatives are equal (implying differentiability of $V$
at $x$) to $u'(x-{\phi}^*(x,\theta))$. Recall from Theorem \ref{mainthm} that $x-{\phi}^*(x,\theta)$ gives the  optimal consumption $c^*(x,\theta)$.  This completes the proof.
\end{proof}

\begin{proof}[Proof of Theorem \ref{thm2}]
Note that by Theorem \ref{mainthm} (b), ${\hat V}$ defined in Eq. \eqref{b} is strictly concave. Consequently, as in the proof of Lemma \ref{lem_env}, the right-derivative ${\hat V}^{\prime}_+$ and the left-derivative ${\hat V}^{\prime}_-$ exist.
Hence we are required to show their equality and the continuity of the derivative. To this end we set a new measure on the product space $[0, \infty) \times {\cal S}$ by
$$\nu'(\theta,y)(dz,\theta'):= \frac {p_{\theta \theta'} e^{-\gamma V(f(\theta,y,z),\theta')}\nu(dz)}{\sum_{\theta'\in \mathcal{S}} p_{\theta \theta'} \int_{[0,\infty)}e^{-\gamma V(f(\theta,y,z),\theta')}\nu(dz)}$$
for each $y\in(0,\infty)$, $\theta \in \mathcal{S}$ and note that
$$\sum_{\theta'\in \mathcal{S}} \int_{[0,\infty)} \nu'(\theta,y)(dz,\theta')=1.
$$
Hence $\nu'(\theta,y)(\cdot,\cdot)$ is a probability measure. Using the above and Theorem \ref{P}(iii) for any $h>0$
\begin{align*}
\frac{\widehat{V}(y+h,\theta)-\widehat{V}(y,\theta)}{h} &=
-\frac 1{h \gamma }\ln \frac { \sum_{\theta'\in \mathcal{S}} p_{\theta \theta'}\int_{[0,\infty)}e^{-\gamma V(f(\theta,y+h,z),\theta')}\nu(dz)}
{\sum_{\theta'\in \mathcal{S}} p_{\theta \theta'}\int_{[0,\infty)}e^{-\gamma V(f(\theta,y,z),\theta')}\nu(dz)}\\
&= -\frac 1{h \gamma }\ln \sum_{\theta'\in \mathcal{S}} \int_{[0,\infty)}e^{-\gamma (V(f(\theta,y+h,z),\theta')-V(f(\theta,y,z),\theta'))}\nu'(\theta,y)(dz,\theta') \\
&\le \frac 1{h}\sum_{\theta'\in \mathcal{S}} \int_{[0,\infty)} \bigg(V(f(\theta,y+h,z),\theta')-V(f(\theta,y,z),\theta') \bigg) \nu'(\theta,y)(dz,\theta').
\end{align*}
Recall that, under the Assumptions (F1) and (F3), $f(\theta, y, z)$ is concave, non-decreasing and continuously differentiable, in $y$ for each $\theta \in {\cal S}$ and $z \in [0, \infty)$. Furthermore, $V(y, \theta)$ is concave and differentiable in $y$ by Lemma \ref{lem_env}, for each $\theta \in {\cal S}$. Hence using Mean Value Theorem we have
\begin{align*}
&\frac{V(f(\theta,y+h,z),\theta')-V(f(\theta,y,z),\theta')}{h}\\
&=\frac{V(f(\theta,y+h,z),\theta')-V(f(\theta,y,z),\theta')}{f(\theta,y+h,z)-f(\theta,y,z)}\frac{f(\theta,y+h,z)-f(\theta,y,z)}h \\
&\le  V'(f(\theta,y,z),\theta')f'(\theta,y,z).
\end{align*}
Thus, by combining above two inequalities, for each $h>0$,
$$ \frac{\widehat{V}(y+h,\theta)-\widehat{V}(y,\theta)}{h}\le \sum_{\theta'\in \mathcal{S}} \int_{[0,\infty)}  V'(f(\theta,y,z),\theta')f'(\theta,y,z)  \nu'(\theta,y)(dz,\theta')=: G(\theta,y).
$$
Hence for $y\in(0,\infty)$ and $\theta \in \mathcal{S}$,
\begin{equation}
\label{t21}
\widehat{V}'_+(y,\theta)\le G(\theta,y).
\end{equation}
Similarly for obtaining an estimate of the left-hand side derivative of $\widehat{V},$ we consider, for each $h > 0$,
\begin{align*}
& \frac{\widehat{V}(y-h,\theta)-\widehat{V}(y,\theta)}{-h} =
\frac 1{h \gamma }\ln \frac { \sum_{\theta'\in \mathcal{S}} p_{\theta \theta'}\int_{[0,\infty)}e^{-\gamma V(f(\theta,y-h,z),\theta')}\nu(dz)}
{\sum_{\theta'\in \mathcal{S}} p_{\theta \theta'}\int_{[0,\infty)}e^{-\gamma V(f(\theta,y,z),\theta')}\nu(dz)}\\
&= \frac 1{h\gamma}\ln \sum_{\theta'\in \mathcal{S}}  \int_{[0,\infty)}e^{-\gamma (V(f(\theta,y-h,z),\theta')-V(f(\theta,y,z),\theta'))}\nu'(\theta,y)(dz,\theta') \\
&\ge \sum_{\theta'\in \mathcal{S}} \int_{[0,\infty)}\frac{V(f(\theta,y-h,z),\theta')-V(f(\theta,y,z),\theta')}{-h} \nu'(\theta,y)(dz,\theta')\\
&= \sum_{\theta'\in \mathcal{S}}  \int_{[0,\infty)}\frac{V(f(\theta,y-h,z),\theta')-V(f(\theta,y,z),\theta')}{f(\theta,y-h,z)-f(\theta,y,z)}\frac{f(\theta,y-h,z)-f(\theta,y,z)}{-h} \nu'(\theta,y)(dz,\theta')\\
&\ge \sum_{\theta'\in \mathcal{S}} \int_{[0,\infty)} V'(f(\theta,y,z),\theta')f'(\theta,y,z) \nu'(\theta,y)(dz,\theta')= G(\theta,y).
\end{align*}
Hence, for $y\in(0,\infty)$ and $\theta \in \mathcal{S}$
\begin{equation}
\label{t22}
\widehat{V}'_-(y,\theta)\ge G(\theta,y).
\end{equation}
Next, we claim that, for each $\theta \in \mathcal{S}$, $G(\theta,\cdot)$ is continuous on $(0, \infty)$. To see this, first we observe that the integrand $g(\theta,y, z,\theta'):=  V'(f(\theta,y,z),\theta')f'(\theta,y,z)$ is continuous in $y$, thanks to Lemma \ref{lem_env}, Assumptions (F1) and (F3).
Again, due to the concavity of $f$ and $V$ (Assumption (F1) and part (a) of Theorem \ref{mainthm}) and the non-decreasing property of $f$ in $y$ (Assumption (F1)), $g(\theta,y, z,\theta')\le g(\theta,\epsilon, z,\theta')$ for all $y\ge \epsilon$, where $\epsilon$ is an arbitrarily small positive number, $z\ge 0$ and $\theta, \theta'\in \mathcal{S}$.
Again, since,
$$  \sum_{\theta'\in \mathcal{S}}\int_{[0,\infty)} g(\theta,\epsilon, z,\theta') \nu'(\theta,y)(dz,\theta') =  G(\theta,\epsilon)\le \widehat{V}'_-(\epsilon,\theta)<\infty,
$$
we may apply the dominated convergence theorem to conclude that $y\mapsto G(\theta,y)$ is continuous on $[\epsilon,\infty)$. As $\epsilon$ is an arbitrary positive number, $y\mapsto G(\theta,y)$ is continuous on $(0,\infty)$.\\

\noindent Since $\widehat{V}$ is concave and (\ref{t21}) and (\ref{t22}) hold, then for $h>0$ such that $h < y$, we obtain
\begin{equation}\label{g}
G(y+h,\theta)\le \widehat{V}'_-(y+h,\theta)\le \widehat{V}'_+(y,\theta)
\le G(\theta,y)\le \widehat{V}'_-(y,\theta)\le \widehat{V}'_+(y-h,\theta)\le G(y-h,\theta).
\end{equation}
Now letting $h\to 0^+$ in (\ref{g}),  continuity of $G(\cdot,\theta)$ yield that, for each $\theta \in \mathcal{S}$, $\widehat{V}(\cdot,\theta)$ is
continuously differentiable on $(0,\infty)$ and from the second statement of Lemma \ref{lem_env} we get
\begin{align*}
& \widehat{V}'({\phi}^*(x,\theta),\theta)  = G({\phi}^*(x,\theta),\theta) \\
& = \sum_{\theta'\in \mathcal{S}} \int_{[0,\infty)} V'(f(\theta,{\phi}^*(x,\theta),z),\theta')f'(\theta,{\phi}^*(x,\theta),z) \nu'(\theta,y)(dz,\theta') \\
& = \sum_{\theta'\in \mathcal{S}} \int_{[0,\infty)} u'(c^*({\phi}^*(x,\theta),\theta')) f'(\theta,{\phi}^*(x,\theta),z) \frac { p_{\theta \theta'} e^{-\gamma V(f(\theta,{\phi}^*(x,\theta),z),\theta')}}{\sum_{\theta'\in \mathcal{S}} p_{\theta \theta'} \int_{[0,\infty)}e^{-\gamma V(f(\theta,{\phi}^*(x,\theta),z),\theta')}\nu(dz)}  \nu(dz).
\end{align*}
Hence $\beta\widehat{V}'({\phi}^*(x,\theta),\theta)$ is identical to the right side of \eqref{e}. Again, as shown in Theorem \ref{mainthm}, and Lemma \ref{lem_kami} the policy ${\phi}^*(x,\theta) \in (0,x)$ is the maximizer of the map $(0,x)\ni y\mapsto u(x-y) +\beta \widehat{V}(y,\theta)$. Hence, the first-order condition with respect to $y$ gives that $ -u'(x-{\phi}^*(x,\theta))+\beta \widehat{V}'({\phi}^*(x,\theta),\theta) =0$. Thus, by part (b) of Theorem \ref{mainthm}, $\beta\widehat{V}'({\phi}^*(x,\theta),\theta)$ is also identical to the left side of \eqref{e}.
This completes the proof of part (a).\\

\noindent In order to prove (b) suppose that $x'< x''.$ We divide the proof in two sub-cases.

\noindent 1) If $x'=0,$ then by Lemma \ref{lem_kami} we have
${\phi}^*(x',\theta)=0<{\phi}^*(x'',\theta)$. Consequently, because $x''> x'=0$, $c^*(x'',\theta) = x''-{\phi}^*(x'',\theta)>0=c^*(x',\theta)$, where we have used the definition $c^* (x, \theta) := x - \phi^* (x, \theta)$ from Theorem \ref{mainthm}(b). Hence part (b) holds for this sub-case.

\noindent 2) We prove (b) for the sub-case $x'' > x' >0$ by contradiction. Since on the right side of the Euler equation \eqref{e} the variable $x$ appears via ${\phi}^*(x,\theta)$ only, we obtain  $u'(x'-{\phi}^*(x',\theta))=u'(x''-{\phi}^*(x'',\theta))$, if we assume ${\phi}^*(x',\theta)={\phi}^*(x'',\theta)$. But the equality cannot hold,
since  $u$ is strictly concave. Similarly, if
$c^*(x',\theta)=c^*(x'',\theta),$ then by the second statement of Lemma \ref{lem_env} we must have, for each $\theta \in \mathcal{S}$,
$V'(x',\theta)=u'(c^*(x',\theta))=u'(c^*(x'',\theta))=V'(x'',\theta).$
However, this equality contradicts the strict concavity of $V(\cdot,\theta)$. Recall from Theorem \ref{mainthm}(b) that the functions $\phi^* (x, \theta)$ and $c^* (x, \theta)$ are non-decreasing in $x$, for each $\theta \in {\cal S}$. Therefore, (b) holds true for every sub-case.
\end{proof}

\section{Stationary Distributions}\label{sec:stationaryDist}
\noindent  In this section, a dynamics of the growth is obtained when the economic agent follows a stationary policy. This along with the regime switching dynamics jointly become a discrete time Markov chain on product state space $[0, \infty) \times {\cal S}$. Consider this Markov chain when the optimal stationary policy ${\phi}^*\in \Phi$ is followed by the economic agent. We recollect and redefine
\begin{align} \label{optimal}
\left.\begin{array}{rl}
 \mathbb{P}(\theta_{k+1}=j\mid h_k)= & \mathbb{P}(\theta_{k+1}=j\mid \theta_k)=p_{\theta_k j}\\
 x_{k+1} = & f(\theta_k, {\phi}^*(x_k,\theta_k),\xi_{k}),
\end{array}\right\}
\end{align}
where $\{\xi_k\}_{k\in\mathbb{N}}$  is independent of $\{\theta_k\}_{k\in\mathbb{N}}$ and i.i.d. random sequence, distributed as $\nu$, a probability measure on $[0,\infty)$.
Clearly, $ \{(x_k,  \theta_k)\}_{k \in \mathbb{N}}$ is a Markov chain with respect to the minimal sigma-field generated by $\{h_k\}_{k \in \mathbb{N}}$. We also recall that due to (F5), $f(\theta,y,z)>0$ for every  $\theta \in \mathcal{S}$, $y\in(0,\infty)$, and in $z$, $\nu$-a.e.. In addition to this, utilising the fact that ${\phi}^*(x,\theta)\in(0,x)$ for $x>0$ and $\theta \in \mathcal{S}$, and the Assumption (F1) that $f(\theta, y, z)$ is non-decreasing in $y$ for every $\theta \in {\cal S}$ and $z \in [0, \infty)$, we may confine ourselves to the study of the income process on $(0,\infty)$.\\%

\noindent In this section, we shall show the existence of at least one non-trivial stationary distribution of \eqref{optimal}. To this end we first show that the system is globally stable under a set of assumptions. Next, following the approach of \cite{bjjet} and references therein, we use the Euler equation \eqref{e} and the Foster-Lyapunov theory (see \cite{f,mt}) of Markov chains for showing the existence of a non-trivial  stationary distribution.

We impose the following assumptions on the distribution of the random shock $\nu$ and the production function $f$ for pursuing the subsequent investigation.

\begin{enumerate}
  \item[(D1)]  $\displaystyle{ \lim_{y\to 0^+} \max_{\theta \in \mathcal{S}} \int_{[0,\infty)} \frac1{\beta f'(\theta,y,z)}\nu(dz)<1.}$
  \item[(D2)] There exist $\lambda_2\in(0,1)$ and $\kappa_2>0$ such that
$$ \max_{\theta \in \mathcal{S}} \int_{[0,\infty)} f(\theta,y,z)\nu(dz)\le \lambda_2 y+\kappa_2,\quad y\in[0,\infty).$$
\item[(D3)] The transition probability matrix $p
= (p_{\theta \theta'})_{\mathcal{S}\times \mathcal{S}}$ is irreducible.
 \end{enumerate}

\noindent
The term $f'(\theta,y,z)$ represents the production rate at $y$ when shock is $z$ and state is $\theta$. This value is nonzero $\nu$-a.e. due to (F5). Thus the integrand in Assumption (D1) is well defined.  Clearly, (D1) precludes the situation where on an average the ratio of growth rates of ideal bank and the production is larger than one at every economic state. On the other hand, the role of Assumption (D2) is to preclude the situation where the average production rate is infinity.
Moreover, since the state space $\mathcal{S}$ is finite, (D3) ensures existence of a unique stationary distribution of the Markov chain $\{\theta_k\}$, see \cite[Chapter 1]{s}.

\begin{lem} \label{lem_distr}
Assume that (D1) holds. Then, for $W_1(x,\theta):= \sqrt{ u'(c^*(x,\theta))e^{-\gamma V(x,\theta)}},$ $x\in(0,\infty)$ and $\theta \in \mathcal{S}$, there exist
$\lambda_1\in(0,1)$ and $\kappa_1>0$ such that
$$\sum_{\theta' \in \mathcal{S}} p_{\theta \theta'} \int_{[0,\infty)} W_1(f(\theta,{\phi}^*(x,\theta),z),\theta')\nu(dz)\le \lambda_1 W_1(x,\theta)+\kappa_1,\quad x\in(0,\infty), \theta \in \mathcal{S}.$$
\end{lem}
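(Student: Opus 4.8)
The quantity $W_1(x,\theta)=\sqrt{u'(c^*(x,\theta))e^{-\gamma V(x,\theta)}}$ is built precisely so that the Euler equation \eqref{e} can be applied to it. The plan is to exploit \eqref{e} to replace $u'(c^*(x,\theta))$ by the expectation appearing on its right-hand side, and then estimate the remaining integral by a Cauchy--Schwarz step, which is exactly why the square root appears in the definition of $W_1$. First I would write, using \eqref{e},
\begin{align*}
u'(c^*(x,\theta)) = \beta\, \frac{\sum_{\theta'\in\mathcal{S}} p_{\theta\theta'}\int_{[0,\infty)} u'(c^*(f(\theta,{\phi}^*(x,\theta),z),\theta'))\, f'(\theta,{\phi}^*(x,\theta),z)\, e^{-\gamma V(f(\theta,{\phi}^*(x,\theta),z),\theta')}\,\nu(dz)}{\sum_{\theta'\in\mathcal{S}} p_{\theta\theta'}\int_{[0,\infty)} e^{-\gamma V(f(\theta,{\phi}^*(x,\theta),z),\theta')}\,\nu(dz)},
\end{align*}
so that multiplying through by the denominator and by $e^{-\gamma V(x,\theta)}$ yields an identity of the form $W_1(x,\theta)^2 \cdot \big(\text{denominator}\big) = \beta\, e^{-\gamma V(x,\theta)} \cdot \big(\text{numerator}\big)$.

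Next I would observe that the numerator can be written as $\sum_{\theta'} p_{\theta\theta'}\int f'\cdot W_1(f(\theta,{\phi}^*(x,\theta),z),\theta')^2\,\nu(dz)$, while the target sum $\sum_{\theta'} p_{\theta\theta'}\int W_1(f(\theta,{\phi}^*(x,\theta),z),\theta')\,\nu(dz)$ is its "square root" version. Applying the Cauchy--Schwarz inequality with respect to the probability measure $p_{\theta\theta'}\nu(dz)$ on $\mathcal{S}\times[0,\infty)$, I would bound
\begin{align*}
\sum_{\theta'\in\mathcal{S}} p_{\theta\theta'}\int_{[0,\infty)} W_1(f(\theta,{\phi}^*(x,\theta),z),\theta')\,\nu(dz) \le \left(\sum_{\theta'} p_{\theta\theta'}\int \frac{1}{\beta f'(\theta,{\phi}^*(x,\theta),z)}\,\nu(dz)\right)^{1/2}\!\!\left(\sum_{\theta'} p_{\theta\theta'}\int \beta f'\, W_1^2\,\nu(dz)\right)^{1/2}.
\end{align*}
The second factor is, by the Euler identity above, equal to $\big(e^{\gamma V(x,\theta)} W_1(x,\theta)^2 \cdot (\text{denominator})\big)^{1/2}$ up to the $e^{-\gamma V(x,\theta)}$ bookkeeping; since $V\ge 0$ the factor $e^{-\gamma V}$ or its inverse must be tracked carefully depending on the sign of $\gamma$, and the denominator $\sum_{\theta'} p_{\theta\theta'}\int e^{-\gamma V(f(\cdots),\theta')}\nu(dz)$ is bounded above by $1$ when $\gamma>0$ (each integrand $\le 1$), which is the case that makes the estimate close cleanly; the $\gamma<0$ case needs the complementary bound. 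After this reduction the right-hand side has the form $\big(\text{bracket}\big)^{1/2}\cdot W_1(x,\theta)$, where the bracket is $\sum_{\theta'} p_{\theta\theta'}\int \frac{1}{\beta f'(\theta,{\phi}^*(x,\theta),z)}\,\nu(dz) \le \max_{\theta}\int \frac{1}{\beta f'(\theta,{\phi}^*(x,\theta),z)}\,\nu(dz)$.

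The final step is to split according to the size of $x$. By Assumption (D1) there is $y_0>0$ and $\delta<1$ with $\max_\theta \int \frac{1}{\beta f'(\theta,y,z)}\,\nu(dz)\le\delta$ for all $y\in(0,y_0)$; using concavity of $f$ in $y$ (so $f'$ is non-increasing, hence $1/(\beta f')$ is non-decreasing in $y$) this bound in fact persists down to all $y\le y_0$. Since ${\phi}^*(x,\theta)\to 0$ as $x\to 0^+$ (continuity of ${\phi}^*$ and ${\phi}^*(0,\theta)=0$), there is $x_0>0$ such that ${\phi}^*(x,\theta)<y_0$ for all $x<x_0$ and all $\theta$, giving the drift inequality with $\lambda_1:=\sqrt{\delta}<1$ and $\kappa_1:=0$ on $(0,x_0)$. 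On the complementary region $x\ge x_0$ I would argue that $W_1$ is bounded there, or more precisely that $\sup_{x\ge x_0,\theta}\sum_{\theta'}p_{\theta\theta'}\int W_1(f(\theta,{\phi}^*(x,\theta),z),\theta')\,\nu(dz)$ is finite, by invoking the growth bounds on $f$ (D2) together with the bound $\|V\|_w<\infty$ from Theorem \ref{mainthm} and $u'\le u'(c^*(x_0,\theta))<\infty$ on the relevant range; taking $\kappa_1$ to be this finite supremum then makes the inequality hold trivially on $x\ge x_0$, and enlarging $\kappa_1$ if necessary covers both regions simultaneously. \textbf{The main obstacle} I anticipate is the careful handling of the sign of $\gamma$ in the Cauchy--Schwarz step — specifically making sure the denominator/$e^{-\gamma V}$ factors are bounded in the right direction — and verifying the boundedness of $W_1$ (through $u'(c^*(\cdot,\theta))$ and $e^{-\gamma V}$) on $x\ge x_0$, since $u'$ could in principle blow up only at $0$, which is excluded, but one still needs a uniform-in-$\theta$ statement.
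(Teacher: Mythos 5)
Your overall route --- Cauchy--Schwarz against the weight $\beta f'$, the Euler equation \eqref{e} to collapse the resulting square, and a split into small and large incomes --- is the same as the paper's, but both of your closing estimates have genuine gaps. First, after Cauchy--Schwarz and the Euler substitution, the right-hand side is not $\bigl(\sum_{\theta'}p_{\theta\theta'}\int(\beta f'(\theta,\phi^*(x,\theta),z))^{-1}\nu(dz)\bigr)^{1/2}W_1(x,\theta)$ but
$\bigl(\sum_{\theta'}p_{\theta\theta'}\int(\beta f'(\theta,\phi^*(x,\theta),z))^{-1}\nu(dz)\bigr)^{1/2}\sqrt{u'(c^*(x,\theta))}
=\bigl(\cdots\bigr)^{1/2}W_1(x,\theta)\,e^{\gamma V(x,\theta)/2}$,
even after using that the normalizing sum $\sum_{\theta'}p_{\theta\theta'}\int e^{-\gamma V(f(\theta,\phi^*(x,\theta),z),\theta')}\nu(dz)\le 1$. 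The factor $e^{\gamma V(x,\theta)/2}\ge 1$ that you silently drop is unbounded in $x$ and is not controlled by the ``denominator $\le 1$'' observation; this is exactly the bookkeeping you flagged but did not complete. The paper's missing idea is to control it on the small-income region by the choice of the threshold itself: since $V(0,\theta)=0$ and $V(\cdot,\theta)$ is continuous and non-decreasing, pick $\delta$ so small that simultaneously $\int(\beta f'(\theta,\delta,z))^{-1}\nu(dz)<\eta<1$ (from (D1), using concavity of $f$ and $\phi^*(x,\theta)<x$, so that $f'(\theta,x,z)\le f'(\theta,\phi^*(x,\theta),z)$; no limit $\phi^*(x,\theta)\to 0$ is needed) and $e^{\gamma V(\delta,\theta)/2}<1/\sqrt{\eta}$, and set $\lambda_1:=\max_\theta e^{\gamma V(\delta,\theta)/2}\bigl[\int\nu(dz)/(\beta f'(\theta,\delta,z))\bigr]^{1/2}<1$. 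Your choice $\lambda_1=\sqrt{\delta}$, $\kappa_1=0$ on $(0,x_0)$ does not follow as written.

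Second, on the region $x\ge x_0$ your finiteness claim for $\kappa_1$ is not justified, and the heuristic ``$u'$ blows up only at $0$, which is excluded'' is wrong: the argument of $u'$ is $c^*(f(\theta,\phi^*(x,\theta),z),\theta')$, and small shocks $z$ can drive $f(\theta,\phi^*(x,\theta),z)$ arbitrarily close to $0$ (e.g. $f=y^{\omega}z$), so by (U4) the integrand can blow up as $z\to 0$; neither (D2) nor $\|V\|_w<\infty$ yields $\nu$-integrability of $\sqrt{u'(c^*(f(\theta,\phi^*(x_0,\theta),z),\theta'))}$. The paper closes this by monotonicity, $W_1(f(\theta,\phi^*(x,\theta),z),\theta')\le W_1(f(\theta,\phi^*(\delta,\theta),z),\theta')$ for $x\ge\delta$, and then by re-applying the same Cauchy--Schwarz/Euler bound at the point $x=\delta$, which gives the finite constant $\kappa_1=\max_\theta\sqrt{u'(c^*(\delta,\theta))}\bigl[\int\nu(dz)/(\beta f'(\theta,\delta,z))\bigr]^{1/2}$; finiteness here uses $c^*(\delta,\theta)>0$ (Lemma \ref{lem_kami}) and (D1). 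So your plan is structurally on target, but both constants $\lambda_1$ and $\kappa_1$ need the above repairs before the drift inequality is actually established.
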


\begin{proof} First note that $W_1 (x, \theta)$ is well-defined since by the Assumption (U1), the utility function $u$ is increasing.
Next, by the Cauchy-Schwarz inequality, it follows that for each $x\in(0,\infty), \theta \in \mathcal{S}$
\begin{align}
\label{cs}\nonumber
\sum_{\theta' \in \mathcal{S}}& p_{\theta \theta'} \int_{[0,\infty)} W_1(f(\theta,{\phi}^*(x,\theta),z), \theta')\nu(dz)\\ \nonumber
=&\sum_{\theta' \in \mathcal{S}} p_{\theta \theta'} \int_{[0,\infty)} \left[
u'(c^*(f(\theta,{\phi}^*(x,\theta),z),\theta')) e^{-\gamma V(f(\theta,{\phi}^*(x,\theta),z),\theta')} \right]^{\frac12}\nu(dz)\\ \nonumber
=&\sum_{\theta' \in \mathcal{S}} p_{\theta \theta'} \int_{[0,\infty)} \left[ u'(c^*(f(\theta, {\phi}^*(x,\theta),z),\theta')) e^{-\gamma V(f(\theta,{\phi}^*(x,\theta),z),\theta')} \frac{\beta f'(\theta,{\phi}^*(x,\theta),z)}{\beta f'(\theta,{\phi}^*(x,\theta),z)} \right. \\\nonumber
& \hspace{2in } \times \left. \frac { \sum_{\theta'\in \mathcal{S}} p_{\theta \theta'} \int_{ [0,\infty)}e^{-\gamma V(f(\theta,{\phi}^*(x,\theta),z),\theta')}\nu(dz)} { \sum_{\theta'\in \mathcal{S}} p_{\theta \theta'} \int_{ [0,\infty)}e^{-\gamma V(f(\theta,{\phi}^*(x,\theta),z),\theta')} \nu(dz)}\right]^{\frac12}\nu(dz)\\ \nonumber
\le& \left[\sum_{\theta' \in \mathcal{S}} p_{\theta \theta'} \int_{[0,\infty)} u'(c^*(f(\theta,{\phi}^*(x,\theta),z),\theta'))  \frac{e^{-\gamma V(f(\theta, {\phi}^*(x,\theta),z), \theta')} \beta f'(\theta,{\phi}^*(x,\theta),z)} { \sum_{\theta'\in \mathcal{S}} p_{\theta \theta'}\int_{ [0,\infty)}e^{-\gamma V(f(\theta,{\phi}^*(x,\theta),z),\theta')}\nu(dz)}\nu(dz)\right]^{\frac12}\\
&\hspace{1.7in} \times \left[  \int_{ [0,\infty)} \frac { \sum_{\theta' \in \mathcal{S}} p_{\theta \theta'} \int_{ [0,\infty)}e^{-\gamma V(f(\theta,{\phi}^*(x,\theta),z),\theta')} \nu(dz)} {\beta f'(\theta,{\phi}^*(x,\theta),z)}\nu(dz)\right]^{\frac12}.
\end{align}
Furthermore,  substitution of   \eqref{e} into \eqref{cs}  yields
\begin{align}
\label{cs1}\nonumber
\sum_{\theta' \in \mathcal{S}}& p_{\theta \theta'} \int_{[0,\infty)} W_1(f(\theta,{\phi}^*(x,\theta),z),\theta')\nu(dz)\\ \nonumber
\le& \sqrt{u'(c^*(x,\theta))}\left[
\int_{[0,\infty)}\frac{ \nu(dz)} {\beta f'(\theta,{\phi}^*(x,\theta),z)} \sum_{\theta' \in \mathcal{S}} p_{\theta \theta'}
 \int_{ [0,\infty)}e^{-\gamma V(f(\theta,{\phi}^*(x,\theta),z),\theta')} \nu(dz)\right]^{\frac12}\\
\le& \sqrt{u'(c^*(x,\theta))}\left[
\int_{[0,\infty)}\frac{ \nu(dz)} {\beta f'(\theta,x,z)} \right]^{\frac12},
\end{align}
as $V (\cdot,\theta)\ge 0$, and $f'(\theta,x,z) \le f'(\theta, {\phi}^*(x,\theta),z)$ for each $\theta\in \mathcal{S}$, $x>0$ due to  (F1) and ${\phi}^*(x,\theta) < x$. Now from Assumption (D1) it follows that there exists $\eta<1$ and $\delta'>0$ such that $\int_{[0,\infty)} (\beta f'(\theta,\delta,z))^{-1}\nu(dz) < \eta$ for every $\delta\le \delta'$ and $\theta \in \mathcal{S}$. Again as $V(0,\theta)=0$, there is a $\delta\in (0,\delta']$ such that $\exp(\gamma V(\delta,\theta)/2)<1/\sqrt{\eta}$. Hence
$$\lambda_1:= \max_{\theta \in \mathcal{S}} e^{\frac{\gamma V(\delta ,\theta)}{2}}\left[\int_{[0,\infty)}\frac{ \nu(dz)} {\beta f'(\theta,\delta,z)}\right]^{\frac12}<1$$
where $\delta$ is chosen as mentioned above. Since $V$ is non-decreasing (as $V \in \mathcal{B}$ due to Theorem \ref{mainthm}(a)), for $x\in(0,\delta)$  we have $e^{-\frac{\gamma V(x ,\theta)}{2}} e^{\frac{\gamma V(\delta ,\theta)}{2}} \ge 1.$
Consequently, by \eqref{cs1}
\begin{align}\label{81} \nonumber
\sum_{\theta' \in \mathcal{S}}& p_{\theta \theta'} \int_{[0,\infty)} W_1(f(\theta,{\phi}^*(x,\theta),z),\theta')\nu(dz)\\
  \le & \sqrt{u'(c^*(x,\theta))e^{-\gamma V(x,\theta)}} e^{\frac{\gamma V(\delta,\theta)}{2}}\left[
\int_{[0,\infty)}\frac{ \nu(dz)} {\beta f'(\theta,\delta,z)} \right]^{\frac12}= \lambda_1 W_1(x,\theta).
\end{align}
For $x\ge \delta$ and $\theta \in \mathcal{S}$, since $\phi^*(\delta, \theta) \in (0, \delta)$, the functions  $V(\cdot,\theta)$, $c^*(\cdot,\theta)$, $f(\theta,\cdot,z)$ are non-decreasing, and $u'$, $1/\exp(\cdot)$ are non-increasing, we have for each $\theta \in \mathcal{S}$
\begin{align}
\nonumber
\sum_{\theta' \in \mathcal{S}}& p_{\theta \theta'} \int_{[0,\infty)} W_1(f(\theta,{\phi}^*(x,\theta),z),\theta')\nu(dz)\\
\nonumber \le & \sum_{\theta' \in \mathcal{S}} p_{\theta \theta'} \int_{[0,\infty)}\left[ u'(c^*(f(\theta,{\phi}^*(\delta,\theta),z),\theta')) e^{-\gamma
  V(f(\theta,{\phi}^*(\delta,\theta),z),\theta')}\right]^{\frac 12}\nu(dz).
\end{align}
Using \eqref{cs1}, the right side of above is bounded above by
\begin{align}
\label{82}& \sqrt{u'(c^*(\delta,\theta))}\left[
\int_{[0,\infty)}\frac{ \nu(dz)} {\beta f'(\theta,\delta,z)} \right]^{\frac12} \le \max_{\theta \in \mathcal{S}} \sqrt{u'(c^*(\delta,\theta))}\left[
\int_{[0,\infty)}\frac{ \nu(dz)} {\beta f'(\theta,\delta,z)} \right]^{\frac12} =:\kappa_1.
\end{align}
The result follows from the estimates (\ref{81}) and (\ref{82}) combined together.
\end{proof}
\noindent Let us denote by $2^{\mathcal{S}}$ the power set of ${\cal S}$. For a given $\theta \in \mathcal{S},x \in [0,\infty), \Theta \in 2^{\mathcal{S}}, A \in \mathcal{B}([0,\infty))$, the measurable map $Q$, given by
 \begin{align}\label{transitionKernel}
     Q(\theta,x,\Theta, A) & := \sum_{\theta' \in \mathcal{S}} p_{\theta \theta'}\int_{[0,\infty)} \mathbbm{1}_A(f(\theta, \phi^*(x,\theta),z)) \mathbbm{1}_{\Theta} (\theta')\nu(dz) \nonumber \\ 
     & = \sum_{\theta' \in \Theta} p_{\theta \theta'} \nu[z \in [0,\infty): f(\theta, \phi^*(x,\theta),z) \in A].
 \end{align}
Note that due to the Theorem 8.9 in \cite{slp} the above defined $Q$ is the stochastic kernel of the Markov chain \eqref{optimal} on $(\mathcal{S}\times [0,\infty); 2^{\mathcal{S}} \times \mathcal{B}\times([0,\infty)))$ in our setting.
In other words, the number $Q(\theta,x,\Theta, A)$ is the probability that the economic system will move from state $(x,\theta)$ to some state in the set $\Theta \times A$ after one period of time.
In the remaining part of this section we aim to show that $Q$ is globally stable. To this end we need to establish some properties of this stochastic kernel. A stochastic kernel $Q$ defines a linear operator $T$ from bounded measurable functions, on $\mathcal{S} \times [0,\infty)$, to itself via the formula
\begin{align*}
(T\psi)(\theta,x) = \sum_{\theta' \in \mathcal{S}}\int_{[0,\infty)} \psi(\theta',y) Q(\theta, x, \theta', dy).
\end{align*}
The kernel $Q$ is said to have the Feller property if, for each $\theta \in \mathcal{S}$, $(T \psi)(\theta,\cdot)$ is bounded and continuous whenever $\psi(\theta,\cdot)$ is, see \cite{cw} for details. Since
\begin{align*}
(T\psi)(\theta,x)= & E[ \psi(\theta_{k+1}, f(\theta_k, \phi^*(x_k,\theta_k),\xi_k)) \mid x_k=x, \theta_k=\theta]\\
= &\sum_{\theta' \in \mathcal{S}}\int_{[0,\infty)}p_{\theta \theta'} \psi(\theta', f(\theta, \phi^*(x,\theta),z)) \nu(dz),
\end{align*}
due to the boundedness of $\psi$ and continuity of $x\mapsto \psi(\theta', f(\theta, \phi^*(x,\theta),z))$, direct application of dominated convergence theorem gives continuity of $x\mapsto (T\psi)(\theta,x)$. Hence $Q$ defined in \eqref{transitionKernel} is Feller.

\noindent Let us denote the space of measures on $[0, \infty)$ by $Pr([0,\infty))$. A sequence $\{\mu_n\}_{n \in \mathbb{N}} \subset Pr([0,\infty))$ is called tight if, for all $\varepsilon > 0$, there exists a compact set $K \subset [0,\infty)$ such that $\mu_n( [0,\infty) \setminus K) \leq  \varepsilon$ for all $n$, refer \cite{b} for details.  For given $\theta \in \mathcal{S}, x \in [0,\infty), \Theta \in 2^{\mathcal{S}}$ and  $A \in \mathcal{B}([0,\infty))$ we set  $Q^1 := Q$ and for $k>1$
\begin{align}\label{Qk}
    Q^k (\theta,x,\Theta, A) :=\sum_{\theta' \in \mathcal{S}} \int_{[0,\infty)} Q^{k-1}(\theta',y,\Theta,A) Q(\theta,x,\theta',dy).
\end{align}
A stochastic kernel $Q$ is said to be bounded in probability if
the sequence $\{ Q^k(\theta,x, \Theta, \cdot)\}_{k\in \mathbb{N}}$ is tight for all $x \in [0,\infty)$, $\theta \in \mathcal{S}$ and $\Theta \in 2^{\mathcal{S}}$.

\begin{lem} \label{lem.last}For each $\theta \in \mathcal{S}$ and $x \in [0,\infty)$ the sequence $\{ Q^k(\theta,x,\cdot,\cdot)\}_{k\geq 0}$, as in \eqref{Qk}, is bounded in probability.
\end{lem}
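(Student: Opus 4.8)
The plan is to run the Foster--Lyapunov drift method with the elementary Lyapunov function $(x,\theta)\mapsto x$, obtaining a first-moment bound on the income marginal of $Q^k$ that is \emph{uniform in} $k$, and then to convert this bound into tightness via Markov's inequality; the key ingredient is Assumption (D2). Throughout fix $\theta\in\mathcal{S}$ and $x\in[0,\infty)$ (the case $k=0$ is trivial, $Q^0$ being the point mass at $(\theta,x)$, so take $k\in\mathbb{N}$). \textbf{One-step drift.} Set
\[
m_1(\theta,x):=\sum_{\theta'\in\mathcal{S}}\int_{[0,\infty)} y\,Q(\theta,x,\theta',dy)=\int_{[0,\infty)} f(\theta,\phi^*(x,\theta),z)\,\nu(dz),
\]
the expected next-period income started from $(x,\theta)$, where the last equality uses $\sum_{\theta'}p_{\theta\theta'}=1$ and the form of $Q$ in \eqref{transitionKernel}. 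Since $\phi^*(x,\theta)\le x$ and $f(\theta,\cdot,z)$ is non-decreasing by (F1), we have $f(\theta,\phi^*(x,\theta),z)\le f(\theta,x,z)$ for every $z$, so by (D2)
\[
m_1(\theta,x)\le\int_{[0,\infty)} f(\theta,x,z)\,\nu(dz)\le\max_{\theta\in\mathcal{S}}\int_{[0,\infty)} f(\theta,x,z)\,\nu(dz)\le\lambda_2 x+\kappa_2 .
\]

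\textbf{Iterated moment bound.} Put $m_k(\theta,x):=\sum_{\theta'\in\mathcal{S}}\int_{[0,\infty)} y\,Q^k(\theta,x,\theta',dy)$ and $m_0(\theta,x):=x$. By the Chapman--Kolmogorov identity implicit in \eqref{Qk}, $m_{k+1}(\theta,x)=\sum_{\theta'\in\mathcal{S}}\int_{[0,\infty)} m_1(\theta',y)\,Q^k(\theta,x,\theta',dy)$; inserting the one-step estimate and using that $\sum_{\theta'}Q^k(\theta,x,\theta',[0,\infty))=1$ (each $Q^k$ being a probability kernel) yields $m_{k+1}(\theta,x)\le\lambda_2\,m_k(\theta,x)+\kappa_2$. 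Since $\lambda_2\in(0,1)$, an induction gives
\[
m_k(\theta,x)\le\lambda_2^{\,k}x+\kappa_2\sum_{j=0}^{k-1}\lambda_2^{\,j}\le x+\frac{\kappa_2}{1-\lambda_2}=:C(x)<\infty\qquad\text{for all }k .
\]

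\textbf{Tightness.} Fix $\Theta\in 2^{\mathcal{S}}$. Since $Q^k(\theta,x,\Theta,\cdot)\le Q^k(\theta,x,\mathcal{S},\cdot)$ as measures on $[0,\infty)$, Markov's inequality gives, for every $N>0$ and every $k$,
\[
Q^k(\theta,x,\Theta,(N,\infty))\le Q^k(\theta,x,\mathcal{S},(N,\infty))\le\frac{m_k(\theta,x)}{N}\le\frac{C(x)}{N}.
\]
Given $\varepsilon>0$, choose $N$ with $C(x)/N\le\varepsilon$ and let $K:=[0,N]$, which is compact; then $\sup_k Q^k(\theta,x,\Theta,[0,\infty)\setminus K)\le\varepsilon$. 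Hence $\{Q^k(\theta,x,\Theta,\cdot)\}_k$ is tight for every $\Theta$, and, $\theta$ and $x$ being arbitrary, $Q$ is bounded in probability.

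\textbf{Main obstacle.} The only step needing genuine care is the second one: justifying the Chapman--Kolmogorov identity for the first moments (equivalently, Tonelli applied to the non-negative kernel composition \eqref{Qk}) and the measurability of $x\mapsto m_1(\theta,x)$, which follows from continuity of $\phi^*$ in Theorem \ref{mainthm}(b), continuity of $f(\theta,\cdot,z)$ in (F1), and Tonelli; everything else is routine bookkeeping. Note that neither (D1) nor Lemma \ref{lem_distr} enters here, since the sole obstruction to tightness is mass escaping to $+\infty$, which the linear-growth bound (D2) already precludes; (D1) and the function $W_1$ are instead what drive the companion recurrence estimate.
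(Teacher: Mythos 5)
Your drift computation is correct as far as it goes: with (D2), the monotonicity of $f(\theta,\cdot,z)$ and $\phi^*(x,\theta)\le x$, the one-step bound $m_1(\theta,x)\le\lambda_2 x+\kappa_2$ holds, the iteration $m_{k+1}\le\lambda_2 m_k+\kappa_2$ is legitimate (Tonelli for the non-negative kernel composition in \eqref{Qk}), and Markov's inequality then keeps all the mass in sets of the form $[0,N]$ uniformly in $k$. Indeed, this is exactly the ``$+\,x$'' half of the Lyapunov function the paper uses.

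The genuine gap is your closing remark that (D1) and Lemma \ref{lem_distr} are not needed because ``the sole obstruction to tightness is mass escaping to $+\infty$.'' In this model there is a second obstruction: escape of mass towards $x=0$. By (F4) and $\phi^*(0,\theta)=0$, zero income is absorbing, so the point mass at $x=0$ (paired with the stationary law of $\theta$) is always a stationary distribution; the entire point of Section 6 is to produce a \emph{non-trivial} one, and for that the chain \eqref{optimal} is explicitly confined to the state space $(0,\infty)\times\mathcal{S}$. Boundedness in probability must therefore be with respect to compact subsets of $(0,\infty)$, i.e.\ sets of the form $[1/n,n]$, and $[0,N]$ is not such a set: a uniform first-moment bound is perfectly compatible with all the mass drifting into every neighbourhood of $0$, in which case Theorem 12.0.1 of Meyn--Tweedie applied in Theorem \ref{Theo3} could return only the trivial equilibrium. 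This is precisely why the paper takes $W(x,\theta)=W_1(x,\theta)+x$ with $W_1(x,\theta)=\sqrt{u'(c^*(x,\theta))e^{-\gamma V(x,\theta)}}$: since $c^*(0^+,\theta)=0$, $u'(0^+)=\infty$ (U4) and $V(0^+,\theta)=0$, the function $W$ is coercive on $(0,\infty)$ (it blows up both as $x\to 0^+$ and as $x\to\infty$), and the drift inequality for the $W_1$ part is exactly Lemma \ref{lem_distr}, which rests on the Euler equation \eqref{e} and Assumption (D1). The paper then concludes via Lemma D.5.3 of Meyn--Tweedie with this norm-like $W$, rather than via Markov's inequality alone. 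So your argument proves only tightness on $[0,\infty)$ and would need to be supplemented by the $W_1$-drift estimate (hence by (D1) and Lemma \ref{lem_distr}) to prove the lemma in the sense in which it is used.
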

\begin{proof}
We first note that the map $W:(0,\infty)\times\mathcal{S} \to \mathbb{R}$ given by
$$W(x,\theta)=W_1(x,\theta) + x, \qquad x\in(0,\infty)$$
is coercive where $W_1 (\theta, x)$ is defined in Lemma \ref{lem_distr}. Indeed,
$$\inf_{x\notin [1/n,n]}W(x,\theta)\ge \min\left(n, \sqrt{ u'(c^*(1/n,\theta))e^{-\gamma V(1/n,\theta)}}\right).$$
As $c^*(0+,\theta)=0$, $u'(0+)=\infty$, and $V(0+,\theta)=0$, from above, $\lim_{n\to \infty} \inf_{x\notin [1/n,n]}W(x,\theta) =\infty$. Next we show that 
for all $x\ge 0$,  $\theta \in \mathcal{S}$
\begin{align}\label{kappa}
\int_{[0,\infty)} \sum_{\theta' \in \mathcal{S}} p_{\theta \theta'}W(f(\theta,{\phi}^*(x,\theta),z),\theta')\nu(dz)\le \lambda W(x,\theta)+\kappa
\end{align}
with $\lambda:=\max\{\lambda_1,\lambda_2\}$  and $\kappa:=\kappa_1+\kappa_2$, where $\lambda_1$ and $\kappa_1$ are as in Lemma \ref{lem_distr}, and $\lambda_2$ and $\kappa_2$ are as in (D2)
Indeed, since ${\phi}^*(x,\theta) < x$, using Lemma \ref{lem_distr} and Assumption (D2),
\begin{align*}
    \sum_{\theta' \in \mathcal{S}}& p_{\theta \theta'} \int_{[0,\infty)} W(f(\theta,{\phi}^*(x,\theta),z),\theta')\nu(dz) \\
    = & \sum_{\theta' \in \mathcal{S}} p_{\theta \theta'} \int_{[0,\infty)} W_1(f(\theta,{\phi}^*(x,\theta),z),\theta')\nu(dz)  + \int_{[0,\infty)} f(\theta,{\phi}^*(x,\theta),z)\nu(dz) \\
    \le &\lambda_1 W_1(x,\theta) + \kappa_1 + \lambda_2 {\phi}^*(x,\theta) + \kappa_2\\
    \le& \lambda_1 W_1(x,\theta) + \kappa_1 + \lambda_2 x + \kappa_2 \le \lambda W(x,\theta) + \kappa.
\end{align*}
Hence, using \eqref{kappa} we get
\begin{align*}
E[W(x_k,\theta_k)\mid x_0=x, \theta_0=\theta]
= & E[E[W(x_k,\theta_k)\mid x_{k-1}, \theta_{k-1}]\mid x_0=x, \theta_0=\theta]\\
\le & \lambda E[ W(x_{k-1}, \theta_{k-1}) \mid x_0=x, \theta_0=\theta] + \kappa.
\end{align*}
By applying the above inequality repeatedly, we get
$$E[W(x_k,\theta_k)\mid x_0=x, \theta_0=\theta] \le  \lambda^k W(x,\theta) + \kappa\frac{1-\lambda^k}{1-\lambda}.$$
Thus we have $\limsup_{k\to \infty} E[W(x_k,\theta_k)\mid x_0=x, \theta_0=\theta] <\infty$ as $\lambda\in (0,1)$.
Now we apply Lemma D.5.3 of \cite{mt} to conclude  that $\{(x_k,\theta_k)\}$ is bounded in probability.
\end{proof}

\begin{thm} \label{Theo3} Assume that  (U1)-(U4), (F1)-(F5) and (D1)-(D3) hold. Then there exists a non-trivial stationary distribution for the Markov chain in \eqref{optimal}.
\end{thm}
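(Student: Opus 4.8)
The plan is to deduce the statement from the two structural facts already established about the stochastic kernel $Q$ of the chain \eqref{optimal}: the Feller property (verified just after \eqref{transitionKernel}) and boundedness in probability (Lemma \ref{lem.last}), combined with a Krylov--Bogolyubov averaging argument -- this is the ``application of a result from \cite{mt}'' alluded to in the introduction, the relevant principle being that a Feller chain that is bounded in probability admits an invariant probability measure. First I would note that, since $\phi^*(x,\theta)\in(0,x)$ for $x>0$ and $f(\theta,y,z)>0$ for $y>0$ $\nu$-a.e.\ (the remark after (F5)), the chain started at any $(x,\theta)\in(0,\infty)\times\mathcal{S}$ stays in $(0,\infty)\times\mathcal{S}$ almost surely, so I may work entirely on the Polish space $(0,\infty)\times\mathcal{S}$. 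Fixing such an $(x,\theta)$, I would form the averaged measures $\mu_n(\Theta,A):=\frac1n\sum_{k=1}^n Q^k(\theta,x,\Theta,A)$.

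The key refinement of Lemma \ref{lem.last} that I would make explicit is that the family $\{Q^k(\theta,x,\cdot,\cdot)\}_k$ is tight \emph{inside} $(0,\infty)\times\mathcal{S}$. Indeed $W(x,\theta)=W_1(x,\theta)+x$ is coercive on $(0,\infty)$ for each $\theta$ and $\mathcal{S}$ is finite, so $m_n:=\min_{\theta'\in\mathcal{S}}\inf_{x'\notin[1/n,n]}W(x',\theta')\to\infty$; the Markov inequality together with the uniform bound $\sup_k E[W(x_k,\theta_k)\mid x_0=x,\theta_0=\theta]<\infty$ from the proof of Lemma \ref{lem.last} gives $Q^k(\theta,x,\mathcal{S},[0,\infty)\setminus[1/n,n])\le m_n^{-1}\sup_k E[W(x_k,\theta_k)\mid\ldots]\to 0$ uniformly in $k$, hence also uniformly in $n$ for the averages $\mu_n$. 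By Prokhorov's theorem a subsequence $\mu_{n_j}$ converges weakly to a probability measure $\mu$ concentrated on $(0,\infty)\times\mathcal{S}$ (no mass leaks to $0$ or to $\infty$). The Feller property then yields invariance by the standard argument: for bounded continuous $\psi$ on $(0,\infty)\times\mathcal{S}$, $T\psi$ is again bounded and continuous, $\int (T\psi)\,d\mu_{n_j}-\int\psi\,d\mu_{n_j}=\frac1{n_j}\big(\int\psi\,dQ^{n_j+1}(\theta,x,\cdot)-\int\psi\,dQ(\theta,x,\cdot)\big)\to 0$, and passing to the limit along $n_j$ gives $\int(T\psi)\,d\mu=\int\psi\,d\mu$, i.e.\ $\mu Q=\mu$.

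Non-triviality is then immediate from the construction. Because $\{0\}\times\mathcal{S}$ is absorbing for the income component -- since $\phi^*(0,\theta)=0$ and $f(\theta,0,z)=0$ by (F4) -- the chain already has the degenerate stationary law $\delta_0\otimes\pi$, where $\pi$ is the unique stationary distribution of $\{\theta_k\}$ guaranteed by the irreducibility assumption (D3) (cf.\ \cite[Chapter 1]{s}). The measure $\mu$ produced above instead satisfies $\mu((0,\infty)\times\mathcal{S})=1$, so it is distinct from the degenerate one and hence is a non-trivial stationary distribution; if desired one may note in addition that, by (D3), the $\mathcal{S}$-marginal of $\mu$ must equal $\pi$ and therefore charge every $\theta\in\mathcal{S}$, so $\mu$ assigns positive mass to $(0,\infty)\times\{\theta\}$ for each $\theta$.

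I expect the delicate point to be the tightness-away-from-zero in the second step -- certifying that the invariant measure genuinely lives on $(0,\infty)$ rather than collapsing onto the boundary point $0$. This is exactly where the Euler equation does its work: it is what allows Lemma \ref{lem_distr} to produce the geometric drift inequality for $W_1(x,\theta)=\sqrt{u'(c^*(x,\theta))e^{-\gamma V(x,\theta)}}$, a function that blows up as $x\to 0^+$ because $u'_+(0)=\infty$ by (U4) while $V(0,\theta)=0$. Without this ingredient one would obtain only tightness on $[0,\infty)$ and could not exclude $\mu=\delta_0\otimes\pi$. The remaining work is the routine care needed to run Krylov--Bogolyubov on the non-compact space $(0,\infty)\times\mathcal{S}$, for which the uniform tail bound above -- together with the finiteness of $\mathcal{S}$ and the Feller property -- is precisely what is required.
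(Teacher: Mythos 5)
Your proposal is correct and follows essentially the same route as the paper: the paper's proof simply combines the Feller property of $Q$, the boundedness in probability from Lemma \ref{lem.last}, and (D3) and then cites Theorem 12.0.1 of \cite{mt}, whose content is exactly the Krylov--Bogolyubov argument you carry out by hand. Your explicit verification that the drift function $W=W_1+x$ is coercive also at $x=0^+$, so that the Ces\`aro limits are tight inside $(0,\infty)\times\mathcal{S}$ and the resulting invariant measure is distinct from the degenerate law $\delta_0\otimes\pi$, is a useful spelling-out of the non-triviality that the paper leaves implicit in the citation.
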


\begin{proof}
Due to (D3) the finite-state Markov chain $\{\theta_k\}_k$ has a unique stationary distribution $\mu$, say. Let $\theta_0$ be sampled from $\mu$. Then using Lemma \ref{lem.last} and the Feller property of $Q$, the theorem follows as a direct application of Theorem 12.0.1 of \cite{mt}.
\end{proof}

\section{Numerical Experiment}

\noindent In this section we show that the natural regime switching extension of a model with Cobb–Douglas production function coupled with a power utility would satisfy the assumptions presented in Sections \ref{sec:optimization} - \ref{sec:stationaryDist}. Further, we provide numerical results to illustrate a numerical approximation to the value function in Eq. \eqref{oe} under a parametric model to be considered in Section 7.1 below and the impacts of introducing regime switches on the (approximate) value function and the optimal investment ratio as the income varies. Specifically, the (approximate) value function and the optimal investment ratio will be compared with their counterparts under the non-regime-switching model studied in \cite[Example 1]{bjjet}.

\subsection{A parametric model}
We consider the following example of regime switching extension of a model which has Cobb–Douglas production function
$$f(\theta,y,z) = y^{\omega(\theta)}z,$$
where $\omega$ is a given function defined on a finite set $\mathcal{S}$ taking values in $(0,1)$.
Thus, it is clear that (F1) holds. Note that the income process in this case evolves as
 $$x_{k+1}= y_k^{\omega(\theta_k)}\xi_k, \quad k\in\mathbb{N},$$
where $\{\theta_k\}$ is an irreducible Markov chain with state space ${\cal S}$ and transition probability matrix $(p_{ij})$. Hence (D3) is true. As in \cite[Example 1]{bjjet}, we consider the power utility, i.e., $u(a)=a^\sigma$ with $\sigma\in(0,1)$. Therefore, (U1) holds. Assumption (U2) holds for $w(x,\theta)=(r+x)^\sigma,$ for any constant $r\ge 1$ whose more suitable range will be set later for satisfying other assumptions. Furthermore, the random shocks and their reciprocals are assumed to have finite mean, i.e.,  $\bar{z}:=\int_{[0,\infty)}z\nu(dz)<\infty$ and $\int_{[0,\infty)}z^{-1} \nu(dz)<\infty$. For example, lognormal distributions possess this.

Next we check Assumption (F2). Let us fix the discount factor $\beta \in (0,1)$ and income of an agent to $x$. Then, for $\theta \in \mathcal{S}$, we have,
\begin{align}
    \sup_{y \in [0,x]} \sup_{\theta^\prime \in \mathcal{S}} \int_{[0,\infty)} w(f(\theta,y,z),\theta^\prime)\, \nu(dz)  
    & = \sup_{y \in [0,x]} \sup_{\theta^\prime \in \mathcal{S}} \int_{[0,\infty)} \left(r+ y^{\omega(\theta)}z  \right)^\sigma\nu(dz) \nonumber\\
    & \leq \sup_{y \in [0,x]} \sup_{\theta^\prime \in \mathcal{S}} \left(r+ y^{\omega(\theta)}\int_{[0,\infty)} z \nu(dz) \right)^\sigma \nonumber\\
    & = (r+ x^{\omega(\theta)} \bar{z})^\sigma.
\end{align}
Here the first inequality follows from the Jensen inequality for concave function ($x\mapsto w(x,\theta)$ for each $\theta\in \mathcal{S}$). So, for given $(x,\theta)$, by dividing $w(x,\theta)$ to both sides we get
\begin{align}\label{ex1-2}
    & \frac{\sup_{y \in [0,x]} \sup_{\theta^\prime \in \mathcal{S}} \int_{[0,\infty)} w(f(\theta,y,z),\theta^\prime)\, \nu(dz) } {w(x,\theta)}  \leq  \left( \frac{r+ x^{\omega(\theta)} \bar{z}}{r+x} \right)^\sigma.
\end{align}
Since $\frac{r+ x^{\omega(\theta)} \bar{z}}{r+x} \leq 1 $ iff  $\bar{z}\leq  x^{1-\omega(\theta)}$,  we consider two cases.
If $x \geq \bar{x}:= \bar{z}^{\frac{1}{1-\omega(\theta)}}$, then the right side of \eqref{ex1-2} is bounded above by $1$.  Since $\beta <1$, (F2)  holds true for every $\alpha\in (1,1/\beta)$ whenever $x \geq \bar{x}$. In the second case we consider the complementary region of income $x$, i.e., $0\le x < \bar{x}$. Then
\begin{align}
    & \left( \frac{r+ x^{\omega(\theta)} \bar{z}}{r+x} \right)^\sigma \leq \left( \frac{r+ x^{\omega(\theta)} \bar{z}}{r} \right)^\sigma \leq  \left( 1 + \frac{\bar{x}^{\omega(\theta)} \bar{z}}{r} \right)^\sigma  = \left( 1 + \frac{\bar{x}}{r} \right)^\sigma = \left( 1+\frac{ (\bar{z}) ^{\frac 1{1-\omega(\theta)}}}r \right)^\sigma.
\end{align}
Hence, for given $\beta \in (0,1),$ one can choose $r\geq 1$ such that  $\alpha := \left( 1 + \frac{\bar{x}}{r} \right)^\sigma $ satisfies  $\alpha \beta <1$. Thus for this choice of $w$ and $\alpha$, (F2) holds for all $x\ge 0$.
It is straightforward to see that the Assumptions (U3)-(U4) and (F3)-(F5) also hold true.

The only remaining Assumptions (D1) and (D2) are examined below. To establish (D1), w.l.o.g. we can assume that $y \in (0,1)$. Then, since $f^\prime(\theta,y,z) = \omega(\theta) y^{\omega(\theta)-1}z$ and $y^{\omega(\theta)-1} \geq 1$ for $y \in (0,1)$,
$$ \frac{1}{\beta f^\prime(\theta,y,z)} \le \frac{1}{\beta \omega(\theta) z} .$$
But, due to our assumption of the finiteness of $\int_{[0,\infty)}z^{-1} \nu(dz)$ the right side expression of the second inequality above is integrable w.r.t. $\nu$. Hence, the Dominated Convergence Theorem is applicable to $ \frac{1}{\beta f^\prime(\theta,y,z)}$. Thus by noting that $\omega (\theta) < 1$ for each $\theta \in {\cal S}$,
$$\lim_{y \to 0+} \max_{\theta \in \mathcal{S}}\int_{[0,\infty)}\frac{1}{\beta f^\prime(\theta,y,z)} \nu(dz)= \max_{\theta \in \mathcal{S}} \int_{[0,\infty)} \lim_{y \to 0+} \frac{1}{\beta (\omega(\theta) z y^{\omega(\theta)-1} )} \nu(dz) =0.$$
Therefore, (D1) is true. Using the fact that for any $0<a<1$,  $y^a \le cy +  (1-a) (a/c)^{\frac{a}{1-a}})$ for all $c>0$,
\begin{align*}
\int_{[0,\infty)} f(\theta,y,z) \nu (dz)
= & y^{\omega(\theta)}\bar{z}
\le \frac{\bar{z}}{1+\bar{z}} y + \bar{z} (1-{\omega(\theta)}) \left(\omega(\theta)+\omega(\theta)\bar{z}\right)^{\frac{\omega(\theta)}{1-\omega(\theta)}}.
\end{align*}
Thus (D2) holds with $\lambda_2 = \frac{\bar{z} }{1+\bar{z}} \in (0,1)$ and  $\kappa_2=\max_{\theta \in \cal{S}} \bar{z} (1-{\omega(\theta)}) \left((1+\bar{z}){\omega(\theta)}\right)^{\frac{\omega(\theta)}{1-\omega(\theta)}} >0$.

\subsection{Numerical values of parameters}
The hypothetical values of the parameters are specified in this subsection. Specifically, we set $\beta = 0.9$ and the risk sensitivity parameter $\gamma = 1$. The distribution of the random shock $\nu$ is assumed to be the probability distribution function of a standard Lognormal distribution. A three-state Markov chain is considered, and the state space ${\cal S}$ of the chain is assumed to be $\{1,2,3\}$. The transition probability matrix and the parameters $\omega$ are taken as
$$p= \left(
\begin{array}{ccc}
0.50 & 0.40 & 0.10  \\
0.25 & 0.50 & 0.25\\
0.10 & 0.40 & 0.50
\end{array}\right)
\textrm{ and }
\begin{array}{r|ccc}
\theta   &  1 &  2 &  3  \\ \hline
\omega & 0.3& 0.5& 0.9
\end{array}.
$$
From the specified numerical values for $\omega$ in the three different states, the regimes $\theta=1$ and $\theta=3$ are associated with the low and high production rate regimes, respectively provided the investment value is not too small. The power parameter $\sigma$ of utility is taken as $1/2$.

\subsection{Computation of optimal investment}
A numerical approximation of the solution to \eqref{oe} for the parametric model as described in Subsections 7.1 and 7.2 is presented below. The proposed numerical method relies on the fact that the solution of \eqref{oe} is the fixed point of the contraction $L$ (as in \eqref{oper_1}) which is also the limit of repeated application of the operator $L$ on a fixed function in the Banach space. Starting with the zero function, we stop after third step of iteration to obtain a numerical approximation of the value function. More specific details are described below.

\noindent Under the assumption of $\xi_k \sim \textrm{Lognormal}(0,1)$, the integral w.r.t. $\nu$ in \eqref{oe}, is simply $$\int_{[0,\infty)} e^{-\gamma V(f(\theta,y,z),\theta')}dF(z)$$ where $F$ is the cdf of the standard lognormal distribution. This can be rewritten as $\int_{0}^{1} e^{-\gamma V(f(\theta,y,F^{-1}(t)),\theta')}dt$. This is numerically approximated using the composite trapezoidal rule by considering 18 equally spaced consecutive sub-intervals of $[0,1]$. The supremum w.r.t. $y$ on $[0,x]$ is approximated by the maximum of values obtained for 30 equi-spaced values of $y$ in $[0,x]$\footnote{The running time of the code in online Matlab compiler is nearly two minutes.}.

\begin{minipage}{\linewidth}
	\centering
	\begin{minipage}{0.45\linewidth}
        \begin{figure}[H]
          	\includegraphics[width=\linewidth]{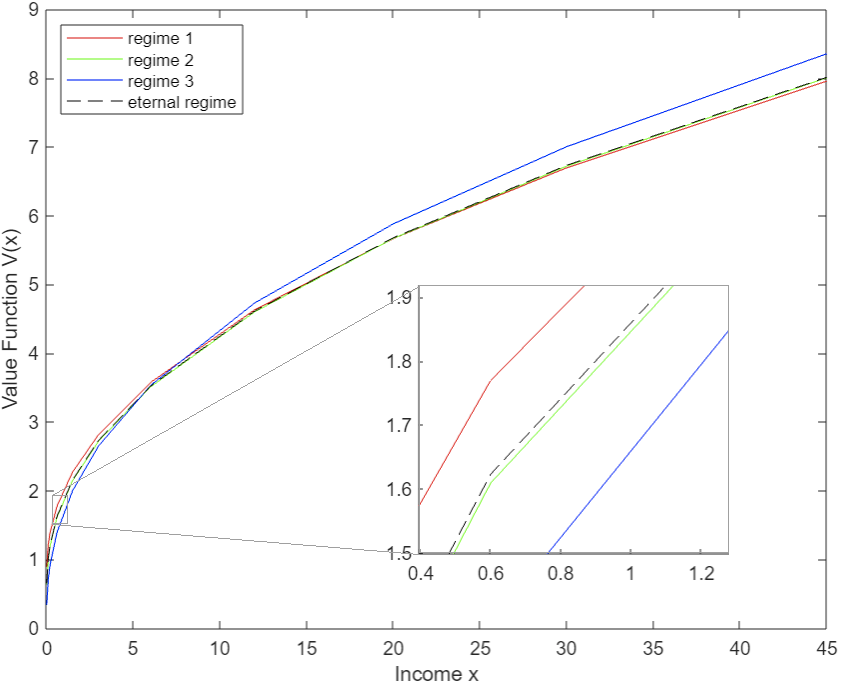}
            \caption{Value function: Numerical solution to \eqref{oe}} \label{valuefn}
        \end{figure}
	\end{minipage}
	\hspace{0.02\linewidth}
	\begin{minipage}{0.45\linewidth}
        \begin{figure}[H]
            \includegraphics[width=\linewidth]{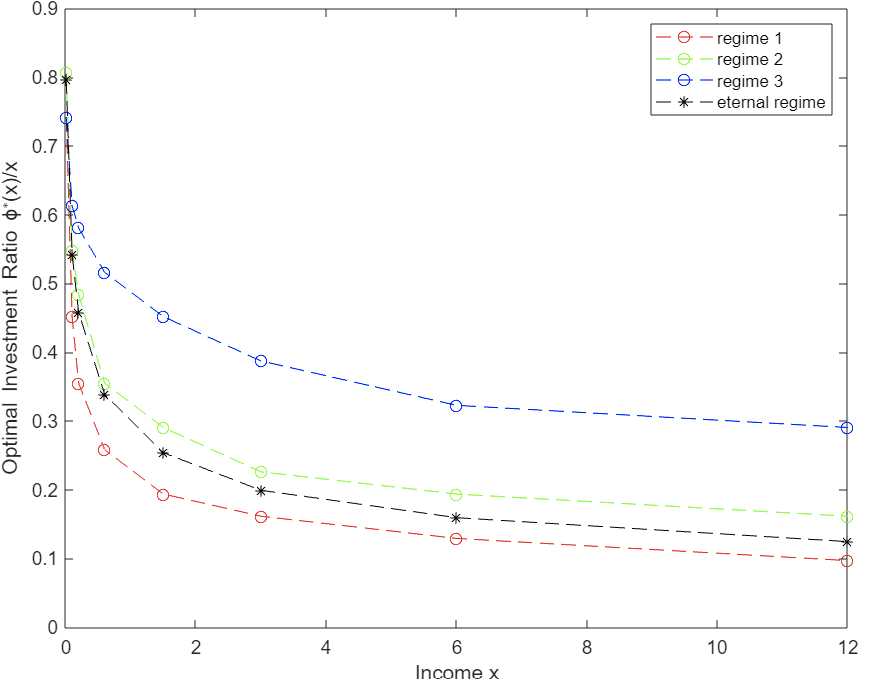}
            \caption{Optimal Investment Ratio: Numerical solution of \eqref{oe_max}} \label{oi}
        \end{figure}
	\end{minipage}
\end{minipage}

In Figure \ref{valuefn} the approximate value function $V(x, \theta)$ is plotted along vertical axis against some specific $x$ values along horizontal axis for each value of $\theta \in \{1,2,3\}$. The line plots for $V(x,1)$, $V(x,2)$ and $V(x,3)$ are in red, green and blue colors respectively. The black broken line plot corresponds to the value function of a single-regime model with parameter values identical to those of regime 2. Similar color scheme is adopted in Figure \ref{oi}, where the ratio of optimal investment and income (i.e., ${\phi}^*(x,\theta)/x$) is plotted against income for each regime and eternal middle regime cases. The Matlab code, used in this paper, can be
accessed from \emph{https://github.com/agiiser/Regime-Switching-Optimal-Growth-Model}.

\subsection{Interpretation}
Figure \ref{oi}, where the optimal investment ratios are plotted, illustrates an intuitive phenomena, namely, the investment ratios depend on the production rates. Specifically, from Figure \ref{oi}, it can be seen that the optimal investment ratio curve shifts up when $\theta$ increases from 1 to 3 through 2, (i.e., the production rate increases from the lowest to the highest). This is consistent with the intuition that it may be optimal to invest more when the production rate is higher. It is also noted that for each fixed production rate regime $\theta$, the optimal investment ratio decreases as the income increases. This reflects that the ratio of optimal consumption and income increases with hike of income. These are common features observed in the fixed/eternal regime models too. Now we discuss some other features which are specific to regime switches in the production rate only.

We first note that the stationary distribution of the regime switching dynamics in the numerical example is $(\frac{5}{18}, \frac{8}{18}, \frac{5}{18})$. This represents a typical scenario where the middle state is more likely than the extreme states on both sides. Furthermore, the transition to low or high production from the middle state is equally likely. Despite this symmetry in transition, influence of an extreme regime on the middle regime can be considered in the following scenario. In principle, if the production function parameters in low and high regimes are not similarly far from the middle regime, the one which is more extreme, should influence the investment decision at the middle regime too. This is verified with the numerical example under consideration. We note that the difference of the values of $\omega(2)$ and   $\omega(3)$ is much larger than the difference of the values of $\omega(2)$ and  $\omega(1)$. As a result the influence of a higher production rate regime should also be observed in the middle production rate regime. This is indeed observed in Figure \ref{oi}. The green line, for middle regime is found above the black line, which corresponds to the eternal middle regime with no chance of switching. Such influence is also seen in Figure \ref{valuefn}. The value function at the middle regime deviates from that for eternal middle regime and moves towards the value function at third regime. The value function plots also exhibit standard features like monotonicity and concavity w.r.t. income. Also, for significantly large incomes, the value function shifts up when the production rate increases from the lowest regime to the highest regime. However, due to the very defining formula, such association of production function is reversed when investment is sufficiently small, which is the case when income is low. This explains the visible inflection of ordering of value function w.r.t. the regime order.

\section{Conclusion}
\noindent One can perhaps imagine a formulation of regime switching dynamics, that is different from ours, where
$$h_k=\left\{\begin{array}{l@{\quad \quad}l}
x_1,& \mbox{ for } k=1,\\
(x_1, y_1, \theta_1, x_2, y_2,\theta_2,\ldots, x_{k-1}, y_{k-1}, \theta_{k-1}, x_k), & \mbox{ for } k\ge2,
 \end{array}\right.$$
denotes the history at $k$th time unit. In this case, the state of the market is only observed on the next day. This would also imply that the production function depends on two different sources of randomness, one described by i.i.d. ($\xi$) and another auto-correlated ($\theta$). However, a stationary strategy in this setting depends solely on the income $x$ of the day as in \cite{bjjet}.
In yet another formulation, the market states may be unobserved but influences the production, and the scope of the policies are restricted to be those which are insensitive to the past market states. Then the past realizations of $\theta$ do not appear in the history process. This one appears as a minor generalization of the literature, where independent noise $\xi_k$ is replaced by a correlated noise $(\xi_k,\theta_k)$ only and rest are identical.
Apart from the above mentioned settings, the formulation of an incomplete information setting is also interesting where, $\theta$ is partially observed only via the realization of production function, and the optimization is not restricted to the state-insensitive policies.\\

\noindent \textbf{Acknowledgement:} We would like to sincerely thank Professor John Stachurski for insightful and valuable comments.

\begin{appendix}
\section{}
\begin{proof}[Proof of Theorem \ref{P}] We note that the constant zero function belongs to $B_w(H_{k+1})$. Furthermore,
$\rho_{\pi_k,h_k}(0) = -\frac{1}{\gamma} \ln \sum_{\theta'\in\mathcal{S}} p_{\theta_k \theta'} \int_{[0,\infty)} \nu(dz)=-\frac{\ln 1}{\gamma}=0$. Using monotonicity of expectation and reverse monotonicity of $x\mapsto e^{-x}$ and $x\mapsto - \ln x$, the first and second properties follow.
Since $\ln$ is concave, using Jensen's inequality
\begin{align*}
\rho_{\pi_k,h_k}(v) \le&  \frac{-1}{\gamma} \sum_{\theta'\in\mathcal{S}}p_{\theta_k \theta'}\int_{[0,\infty)}  \ln \left( e^{-\gamma v(h_k,\pi_k(h_k), f(\theta_k,\pi_k(h_k),z),\theta')} \right) \nu(dz)\\
=&\sum_{\theta'\in\mathcal{S}} p_{\theta_k \theta'} \int_{[0,\infty)} v(h_k,\pi_k(h_k), f(\theta_k,\pi_k(h_k),z),\theta') \nu(dz)\\
=& E[v(h_k,\pi_k(h_k), f(\theta_k,\pi_k(h_k),\xi_k),\theta_{k+1})\mid h_k].
\end{align*}
Hence the third property holds. For proving the fourth property we do the following. For each $h_k\in H_k$ and $\pi= (\pi_k)_{k\in\mathbb{N}} \in \Pi$, set
$$w_{\pi_k,h_k}(z,\theta'):= w(f(\theta_k,\pi_k(h_k),z),\theta')$$
where $w$ is as in (U2) (F2). Then by the definition of the spaces $B_w(H_{k+1})$ and $B_{w_{\pi_k,h_k}}$, for each $v \in B_w(H_{k+1})$, the map $(z,\theta')\mapsto v(h_k,\pi_k(h_k), f(\theta_k,\pi_k(h_k),z),\theta')$ is  in $B_{w_{\pi_k,h_k}}$ for each $h_k\in H_k$, $\pi= (\pi_k)_{k\in\mathbb{N}} \in \Pi$. Let $F_p: B_{w_{\pi_k,h_k}}\to \mathbb{R}$ be given by
$$
F_p(g):=\frac {1}{-\gamma}\ln \left[\sum_{\theta'\in\mathcal{S}} p_{\theta'} \int_{[0,\infty)} e^{-\gamma g(z,\theta')}\nu(dz) \right]
$$
where $p=(p_1,\ldots, p_N)$ is a probability mass function (pmf) on $\mathcal{S}$, and $g\in B_{w_{\pi_k,h_k}}$.
We wish to establish concavity of the above function $F_p$ using the second order Fr\'echet derivative $D^2F_p$ which is a map from $B_{w_{\pi_k,h_k}}$ to the space of all bilinear forms on $B_{w_{\pi_k,h_k}}\times B_{w_{\pi_k,h_k}}$.

By a direct calculation, $D^2F_p(g)$ along a pair of directions $(\eta, \eta^{\prime})$ is given by
\begin{align*}
D^2F_p(g)(\eta,\eta') =-\gamma &\left(\frac{\sum_{\theta'\in\mathcal{S}} p_{\theta'} \int_{[0,\infty)} e^{-\gamma g(z,\theta')}\eta(z,\theta')\eta'(z,\theta')\nu(dz) }{\sum_{\theta'\in\mathcal{S}} p_{\theta'} \int_{[0,\infty)} e^{-\gamma g(z,\theta')}\nu(dz)}\right.\\
&\left.-\frac{\sum_{\theta'\in\mathcal{S}} p_{\theta'} \int_{[0,\infty)} e^{-\gamma g(z,\theta')} \eta(z,\theta')\nu(dz) \sum_{\theta'\in\mathcal{S}} p_{\theta'} \int_{[0,\infty)} e^{-\gamma g(z,\theta')}\eta'(z,\theta')\nu(dz)}{\left(\sum_{\theta'\in\mathcal{S}} p_{\theta'} \int_{[0,\infty)} e^{-\gamma g(z,\theta')} \nu(dz)\right)^2}\right)
\end{align*}
for any $\eta, \eta' \in B_{w_{\pi_k,h_k}}$. Hence using the Cauchy-Schwartz inequality we get for every $\eta\in B_{w_{\pi_k,h_k}}$
\begin{align*}
D^2F_p(g)(\eta,\eta) = -\gamma&\left[\left(\sum_{\theta'\in\mathcal{S}} p_{\theta'} \int_{[0,\infty)} e^{-\gamma g(z,\theta')}\eta^2(z,\theta')\nu(dz) \right)\left(\sum_{\theta'\in\mathcal{S}} p_{\theta'} \int_{[0,\infty)} e^{-\gamma g(z,\theta')}\nu(dz)\right)\right.\\
&\left.-\left(\sum_{\theta'\in\mathcal{S}} p_{\theta'} \int_{[0,\infty)} e^{-\gamma g(z,\theta')}\eta(z,\theta')\nu(dz)\right)^2\right]\bigg/ \left(\sum_{\theta'\in\mathcal{S}} p_{\theta'} \int_{[0,\infty)} e^{-\gamma g(z,\theta')}\nu(dz)\right)^2\\
\le 0 &
\end{align*}
for all $g\in B_{w_{\pi_k,h_k}}$. Thus $F_p$ is concave on $B_{w_{\pi_k,h_k}}$. Moreover, for each $h_k\in H_k$, $\pi= (\pi_k)_{k\in\mathbb{N}} \in \Pi$ and  $v\in B_w(H_{k+1})$, $\rho_{\pi_k,h_k}(v) = F_{p_{\theta_k}}(v(h_k,\pi_k(h_k), f(\theta_k,\pi_k(h_k),z),\theta'))$, where the pmf $p_{\theta_k}$ is $(p_{\theta_k 1}, \ldots, p_{\theta_k N})$. Hence $v\mapsto \rho_{\pi_k,h_k}(v)$ is concave too. This completes the proof of (iv). The inequality in (v) follows from Theorem \ref{P}(iii) and (F2). Indeed, since $v_{k+1}\in B_w(H_{k+1})$ and $y_k = \pi_k (h_k)$, we have
\begin{eqnarray*}
\nonumber 0\le
\rho_{\pi_k,h_k}(v_{k+1}) &\le& \sum_{\theta'\in\mathcal{S}} p_{\theta_k \theta'} \int_{[0,\infty)} v_{k+1}(h_k,\pi_k(h_k), f(\theta_k,\pi_k(h_k),z),\theta') \nu(dz) \\\nonumber
&\le&
d_{v_{k+1}} \sum_{\theta'\in\mathcal{S}} p_{\theta_k \theta'} \int_{[0,\infty)}w(f(\theta_k,\pi_k(h_k),z),\theta')\nu(dz)\\ \nonumber
&\le&
d_{v_{k+1}}\sup_{y\in[0,x_k]} \sum_{\theta'\in\mathcal{S}} p_{\theta_k \theta'} \int_{[0,\infty)}w(f(\theta_k,y,z),\theta')\nu(dz)\\
&\le& d_{v_{k+1}} \alpha w(x_k,\theta_k) = d_{v_{k+1}} \alpha w_{\theta_k}(x_k),
\end{eqnarray*}
for any $h_k\in H_k$ and  $k\in\mathbb{N}.$ Lastly, for $\mu \in [0,1]$, the assertion (vi) follows by taking $v' =0$ and $\lambda =\mu$ in assertion (iv) and then using (i). To prove the case $\mu\geq 1$ we first apply the assertion (ii) with $v' =0$  and $\lambda = \frac{1}{\mu}$, to get $\rho_{\pi_k,h_k}(v/\mu) - \rho_{\pi_k,h_k}(v)/\mu\ge 0$ for each $v \in B_w(H_{k+1})$. Since, given a $v \in B_w(H_{k+1})$, $v':=\mu v \in B_w(H_{k+1})$, we get $\rho_{\pi_k,h_k}(v'/\mu) - \rho_{\pi_k,h_k}(v')/\mu\ge 0$. Or $\mu \rho_{\pi_k,h_k}(v) - \rho_{\pi_k,h_k}(\mu v)\ge 0$.
\end{proof}

\noindent The following result can be found in \cite{dgl} (Theorem A.19). We provide the proof  here for making this self-contained.
\begin{pr}
\label{cov}
Let $X$ be a real-valued random
variable defined on $(\Omega,{\cal F}, P)$ and let $h$ and $g$ be  non-increasing
real-valued measurable functions. Then,
$$E\{h(X)g(X)\} \ge E\{h(X)\}E\{g(X)\},$$
provided that all expectations exist and are finite.
\end{pr}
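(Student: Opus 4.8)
The plan is to invoke the classical ``two independent copies'' device for association (Chebyshev-type) inequalities. First I would enlarge the probability space if necessary and introduce a random variable $Y$ that is independent of $X$ and has the same law as $X$; in particular $h(Y)$, $g(Y)$ and $h(Y)g(Y)$ all have the same (finite) expectations as the corresponding functions of $X$. The crucial elementary observation is that for arbitrary real numbers $x,y$ the two differences $h(x)-h(y)$ and $g(x)-g(y)$ have the same sign: since both $h$ and $g$ are non-increasing, both differences are $\ge 0$ when $x\le y$ and both are $\le 0$ when $x\ge y$. Hence $\bigl(h(x)-h(y)\bigr)\bigl(g(x)-g(y)\bigr)\ge 0$ for all $x,y$, and applying this pointwise with $x=X(\omega)$, $y=Y(\omega)$ gives the almost sure inequality $\bigl(h(X)-h(Y)\bigr)\bigl(g(X)-g(Y)\bigr)\ge 0$.

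Next I would take expectations of this non-negative random variable and expand the product, obtaining
$$E[h(X)g(X)]-E[h(X)g(Y)]-E[h(Y)g(X)]+E[h(Y)g(Y)]\ge 0.$$
Because $X$ and $Y$ are independent, the two mixed terms factorise: $E[h(X)g(Y)]=E[h(X)]\,E[g(Y)]=E[h(X)]\,E[g(X)]$ and likewise $E[h(Y)g(X)]=E[h(Y)]\,E[g(X)]=E[h(X)]\,E[g(X)]$, while $E[h(Y)g(Y)]=E[h(X)g(X)]$ since $Y$ and $X$ are identically distributed. Substituting these identities and dividing by $2$ yields precisely $E[h(X)g(X)]\ge E[h(X)]\,E[g(X)]$, which is the claimed inequality.

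The only point that needs a word of justification is integrability, so that the expansion and rearrangement above are legitimate: all four expectations in the displayed inequality must be finite. This follows at once from the hypothesis that $E[h(X)]$, $E[g(X)]$ and $E[h(X)g(X)]$ exist and are finite, together with the independence of $X$ and $Y$, which makes $h(X)g(Y)$ and $h(Y)g(X)$ integrable as products of independent integrable random variables. I do not anticipate any genuine difficulty here; the whole argument is short and self-contained once the independent copy $Y$ has been introduced.
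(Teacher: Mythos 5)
Your proposal is correct and follows essentially the same route as the paper: both introduce an independent copy $Y$ of $X$ and exploit the pointwise inequality $\bigl(h(x)-h(y)\bigr)\bigl(g(x)-g(y)\bigr)\ge 0$ for non-increasing $h,g$. The only cosmetic difference is bookkeeping — you expand the full symmetric product and divide by $2$, while the paper subtracts a single mixed term and symmetrizes via an indicator decomposition — but the underlying argument is identical.
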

\begin{proof}
As both $h$ and $g$ are non-increasing $(h(x)-h(y))(g(x)-g(y)) \ge 0$. Using this we obtain the following using two independent and identically distributed random variables $X$ and $Y$
\begin{eqnarray*}
\lefteqn{E[h(X)g(X)]-Eh(X) Eg(X)}\\
&=&E[h(X)g(X)]-E[h(Y)g(X)]\\
&=&E[(h(X) -h(Y))g(X)]\\
&=&E[1_{\{X>Y\}}(h(X) -h(Y))g(X)] + E[1_{\{X<Y\}}(h(X) -h(Y))g(X)] + E[1_{\{X=Y\}}(h(X) -h(Y))g(X)]\\
&=&E[1_{\{X>Y\}}(h(X) -h(Y))g(X)] + E[1_{\{Y<X\}}(h(Y) -h(X))g(Y)]\\
&=&E[1_{\{X>Y\}}\big((h(X) -h(Y))g(X) + (h(Y) -h(X))g(Y)\big)]\\
&=&E[1_{\{X>Y\}}(h(X) -h(Y))\big(g(X) -g(Y)\big)]\\
&\ge&0.
\end{eqnarray*}
Hence the proposition is true.
\end{proof}
The proof of the next result is a slight modification of  \cite[Lemma 3.1]{kami}  or  \cite[Corollary 11.2.10]{st}.

\begin{lem}\label{lem-concave1}
Let $g: [0,\infty) \to [0,\infty)$ be a  concave function  such that $g(0) \ge 0$. For every $0<y'<y$, we have \begin{equation*}
\frac{g(y')}{y'} \ge \frac{g(y)}{y}.
\end{equation*}
\end{lem}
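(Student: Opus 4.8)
The plan is to exploit the concavity of $g$ by writing the smaller point $y'$ as an explicit convex combination of $y$ and $0$. Concretely, set $\lambda := y'/y$, which lies in $(0,1)$ since $0 < y' < y$. Then $y' = \lambda y + (1-\lambda)\cdot 0$, so concavity of $g$ on $[0,\infty)$ gives
\[
g(y') \;\ge\; \lambda g(y) + (1-\lambda) g(0).
\]

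Next I would use the hypothesis $g(0)\ge 0$ to drop the last term: since $1-\lambda \ge 0$ and $g(0)\ge 0$, we have $(1-\lambda)g(0)\ge 0$, hence $g(y') \ge \lambda g(y) = \tfrac{y'}{y}\, g(y)$. Finally, dividing both sides by $y' > 0$ yields $g(y')/y' \ge g(y)/y$, which is the claimed inequality.

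There is essentially no obstacle here; the only things to be slightly careful about are that $\lambda\in(0,1)$ genuinely holds (which is immediate from $0<y'<y$) and that all quantities being divided are strictly positive (here $y'>0$ and $y>0$ by assumption). The statement $g(0)\ge 0$ is in fact automatic from $g:[0,\infty)\to[0,\infty)$, but invoking it explicitly keeps the argument transparent and matches how the lemma is applied earlier (e.g.\ to $u(f(\theta,\cdot,z))$, where the value at $0$ equals $0$).
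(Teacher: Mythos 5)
Your proof is correct and is essentially the same argument as the paper's: write $y'=\lambda y+(1-\lambda)\cdot 0$ with $\lambda=y'/y\in(0,1)$, apply concavity, discard the nonnegative term $(1-\lambda)g(0)$, and divide by $y'>0$. No gaps; the remarks about $\lambda\in(0,1)$ and positivity of the denominators are fine and match the paper's treatment.
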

\begin{proof}
Let $\lambda \in [0,1]$ such that $y' = \lambda y$. Then due to concavity of $g$,
\begin{align*}
g(y') & = g(\lambda y) = g(\lambda y + (1-\lambda)0) \ge \lambda g(y) + (1-\lambda) g(0)  = \frac{y'}{y} g(y) + (1-\lambda) g(0)\\
\textrm{or, } \frac{g(y')}{y'} & \ge  \frac{ g(y)}{y} + \frac{(1-\lambda)}{y'} g(0) \ge  \frac{ g(y)}{y} \quad \textrm{as } g(0) \textrm{ is nonnegative.}
 \end{align*}
\end{proof}
\end{appendix}

\bibliographystyle{abbrv}

\end{document}